\definecolor{black}{rgb}{0.0, 0.0, 0.0}
\definecolor{red}{rgb}{1.0, 0.5, 0.5}
\title[   ]{$L^2$-type contraction for shocks of scalar viscous conservation laws with strictly convex flux}
\author[Kang]{Moon-Jin Kang}
\address[Moon-Jin Kang]{%\newline Laboratoire Jacques-Louis Lions, 
%\newline UPMC F-75005 Paris, France
\newline Department of Mathematic \& Research Institute of Natural Sciences, \newline Sookmyung Women's University, Seoul 140-742, Korea}
\email{moonjinkang@sookmyung.ac.kr}
\newtheorem{theorem}{Theorem}[section]
\newtheorem{lemma}{Lemma}[section]
\newtheorem{corollary}{Corollary}[section]
\newtheorem{proposition}{Proposition}[section]
\newtheorem{remark}{Remark}[section]
\newcommand{\bbr}{\mathbb R}
\newcommand{\deo}{\delta_0}
\numberwithin{figure}{section}
\newcommand{\beq}{\begin{equation}}
\newcommand{\eeq}{\end{equation}}
\newcommand{\bsp}{\begin{split}}
\newcommand{\esp}{\end{split}}
\newcommand{\intr}{{\int_{\RR_-}}}
\newcommand{\lpa}{\left(}
\newcommand{\rpa}{\right)}
\newcommand{\RR}{{\mathbb R}}
\newcommand{\s}{\sigma}
\def\eps{\varepsilon }
\newcommand\adots{\mathinner{\mkern2mu\raise1pt\hbox{.}
\mkern3mu\raise4pt\hbox{.}\mkern1mu\raise7pt\hbox{.}}}
\def\charf {\mbox{{\text 1}\kern-.30em {\text l}}}
\begin{document}
%%%%%%%%%%%%%%%%
\bibliographystyle{plain}

\date{\today}

\subjclass[2010]{35L65, 35L67, 35B35, 35B40} \keywords{Contraction, Shock, Scalar viscous conservation laws, Strictly convex flux, Relative entropy}

\thanks{\textbf{Acknowledgment.}  The author was partially supported by the NRF-2017R1C1B5076510.
}

\begin{abstract}
We study the $L^2$-type contraction property of large perturbations around shock waves of scalar viscous conservation laws with strictly convex fluxes in one space dimension.
The contraction holds up to a shift, and it is measured by a weighted related entropy, for which we choose an appropriate entropy associated with the strictly convex flux. In particular, we handle shocks with small amplitude. This result improves the recent article \cite{Kang-V-1} of the author and Vasseur on $L^2$-contraction property of shocks to scalar viscous conservation laws with a special flux, that is almost the Burgers flux.
\end{abstract}

\maketitle \centerline{\date}

\tableofcontents

\section{Introduction}
\setcounter{equation}{0}

A scalar conservation law in one space dimension is a first order partial differential equation
of the form
\[
u_t + f(u)_x=0,
\]
where $u$ represents a conserved quantity, and $f$ is the flux function describing certain physical phenomenon. This scalar equation has been extensively used in macroscopic modeling of various phenomena such as vehicular traffic flow, pedestrian flow, driven thin film flow, and so on (see for example \cite{AF,BDC,BD,Bertozzi,BMFC,CMW,DG,GK,LW}). 
An important feature of this equation is an emergence of shock waves as severe singularities even for smooth initial data. In the modeling of traffic flow, the traffic jam is regarded as the shock wave. For a variety of purposes, a linear diffusion is added in the above equation, in other words a scalar viscous conservation law is considered. The scalar viscous conservation law is an important object in studying for not only the viscous shock layer but also the conservation law with Brownian motion.

We consider a scalar viscous conservation law with a strictly convex flux $f$ in one space dimension: 
\begin{align}
\begin{aligned} \label{main}
&u_t + f(u)_x = u_{xx}, \quad t>0,~x\in \bbr,\\
&u(0,x) = u_0(x).
\end{aligned}
\end{align}

For any strictly convex flux $f$, and any real numbers $u_{-}, u_{+}$  with $u_{-}>u_{+}$, the scalar conservation law \eqref{main} admits viscous shock waves $S$ connecting the two end states  $u_-$ and $u_+$. More precisely, there exists a smooth traveling wave $S(x-\sigma t)$ as a solution of 
\begin{align}
\begin{aligned}\label{shock} 
& -\sigma S^{\prime} + f(S)^{\prime} = S^{\prime\prime},\\
& \lim_{\xi\to \pm\infty} S(\xi) = u_{\pm},
\end{aligned}
\end{align} 
where $\sigma$ is the velocity of $S$ determined by the Rankine-Hugoniot condition:
\beq\label{RH-con}
\sigma=\frac{f(u_-)-f(u_+)}{u_--u_+}.
\eeq
Indeed, it follows from \eqref{shock}-\eqref{RH-con} that there exists $S$ such that $u_+<S<u_-$ and
\begin{align}
\begin{aligned}\label{shock-1} 
S^{\prime} &= - \sigma (S -u_\pm) + f(S) -f(u_\pm)\\
& = (S-u_\pm) \Big( \frac{f(S)-f(u_\pm)}{S-u_\pm} -\frac{f(u_-)-f(u_+)}{u_--u_+} \Big) <0,
\end{aligned}
\end{align}
where the strict convexity of $f$ implies the last inequality. However, when the flux $f$ periodically depends on the space variable, the existence of shocks is nontrivial issue (see \cite{Dalibard,D-K}). 

In this article, we aim to show the contraction property of any large perturbations of shock waves to \eqref{main} with strictly convex fluxes. The contraction holds up to a shift, and it is measured by a weighted relative entropy. %The novelty of this article is twofold: 
Our result improves the recent article \cite{Kang-V-1} of the author and Vasseur about $L^2$-contraction property of shocks to \eqref{main} with special fluxes as small perturbations of the quadratic Burgers flux, in the sense that
$f(u)=au^2 + g(u)$ for $a>0$ and $g$ satisfying $\|g^{\prime\prime}\|_{L^{\infty}(\bbr)} <\frac{2}{11}a$. 
Notice that the authors in \cite[Theorem 1.2]{Kang-V-1} shows that for any shock, there exists a strictly convex  flux and a smooth initial datum such that there is no $L^2$-contraction property.\\
We will get the contraction property even for general strictly convex fluxes. For that, we employ a weighted relative entropy to measure the contraction instead of using $L^2$-distance as in \cite{Kang-V-1}. 
Our method is based on the new approach introduced by \cite{Kang-V-NS17}, in which they proved the contraction property of shocks to the barotropic Navier-Stokes system. The new method of \cite{Kang-V-NS17} was also used in studies on contraction of traveling waves  \cite{CKKV,Kang-V_unique}.  We here handle shocks with small amplitude as in \cite{Kang-V-NS17}.
However, compared to \cite{Kang-V-NS17}, our main difficulty is to handle any strictly convex flux. To overcome it, we take advantage of the fact that any function can be an entropy of scalar conservation laws \eqref{main}. More precisely, for a given strictly convex flux, we will choose a strictly convex entropy associated to the flux, and consider  the relative entropy defined by the entropy. \\

Notice that since the relative entropy is locally quadratic, the contraction measured by the relative entropy can be regarded as $L^2$-type contraction. We need to use only one entropy to get the $L^2$-type contraction measured by the relative entropy, contrary to the $L^1$-contraction by Kruzkhov \cite{K1}, in which the large family of entropies $\eta_k(u):=|u-k|, ~k\in\bbr$ are used. However, since many systems of conservation laws (including Euler systems) have only one nontrivial entropy, using only one entropy for the contraction property is a remarkable program. \\
In this sense, this article follows the same context as in the works \cite{CV,Kang,Kang-V-NS17,Kang-V_unique,KVARMA,Kang-V-1,KVW,Krupa-V,Leger,Serre-Vasseur,SV_16,SV_16dcds,Vasseur-2013,VW} of Vasseur and his collaborators about stability (or contraction, uniqueness) of viscous (or inviscid) shocks by the relative entropy method.
 We also refer to Dafermos \cite{Dafermos1} and DiPerna \cite{DiPerna} for studies on uniqueness and stability of Lipschitz solution to conservation laws, which were obtained by using the relative entropy method at the first time.

\subsection{Main result}
By the theory of conservation law, any convex function $\eta$ is an entropy of the scalar conservation law \eqref{main} since there exists an entropy flux $q$ as
\beq\label{def-q}
q(u)=\int^u \eta'(w) f'(w) dw.
\eeq
Therefore, we can choose any entropy we need. We will consider an appropriate entropy associated with the strictly convex flux $f$ of \eqref{main} as the following hypotheses.

$\bullet$ {\bf Hypotheses:} Let $f : \bbr\to\bbr$ be a smooth (or ${C}^3$) and strictly convex function satisfying the growth condition: 
\beq\label{growth}
\mbox{there exist constants $a, b>0$ such that $|f(u)|\le  a e^{b|u|}$ for all $u\in\bbr$.}
\eeq
Then, there exists a strictly convex entropy $\eta$ such that $\eta$ is a strictly convex smooth function satisfying the following hypotheses:
\begin{itemize}
\item  
(${\mathcal H}$1): There exists a constant $\alpha>0$ such that $\eta''(u)\ge \alpha$ and $\eta''''(u)\ge \alpha$ for all $u\in\bbr$.\\

\item  
(${\mathcal H}$2): For a given constant $\theta>0$, there exists a constant $C>0$ such that 
for any $u,v\in \bbr$ with $|v|\le \theta$, the following inequalities holds:\\

(i) $ |\eta'(u)-\eta'(v)|^2 {\mathbf 1}_{\{ |u| \le 2\theta \}} + |\eta'(u)-\eta'(v)|{\mathbf 1}_{\{|u| >2\theta \}} \le C \eta(u|v)$\\

(ii) $|f(u)-f(v)|\le C |\eta'(u)-\eta'(v)|$\\

(iii) $|\eta''(u)-\eta''(v)|\le C |\eta'(u)-\eta'(v)|$\\

(iv) $\Big|\int^u_v \eta''(w)f(w)\, dw \Big| \le C \Big( |\eta'(u)-\eta'(v)| {\mathbf 1}_{\{ |u| \le 2\theta \}}  +|\eta'(u)-\eta'(v)|^2 {\mathbf 1}_{\{|u| >2\theta \}}\Big)$.
\end{itemize}

 \vskip1cm
 
In Appendix \ref{app-entropy}, we present the proof of the existence of entropies verifying the above hypotheses.
Under an entropy $\eta$ satisfying the above hypotheses, we consider the relative entropy
\[
\eta(u|v):= \eta(u)-\eta(v)-\eta'(v)(u-v).
\]
Note that since the entropy $u\mapsto\eta(u)$ is strictly convex, the relative entropy $\eta(\cdot | \cdot)$ is positive definite, that is, $\eta(u_1 | u_2)\geq 0$ for any $u_1$ and $u_2$, and 
$\eta(u_1 | u_2)= 0$ if and only if $u_1=u_2$.\\ 
The following result provides the contraction property of large perturbations measured by the relative entropy with some weight.

\begin{theorem}\label{thm_general}
Consider an equation \eqref{main} with any smooth (or ${C}^3$) strictly convex flux $f$ satisfying the growth condition \eqref{growth}. Let $\eta$ be an entropy of \eqref{main} satisfying the hypotheses (${\mathcal H}1$)-(${\mathcal H}2$).\\
For any $u_-\in \bbr$ with $f''(u_-)>0$, there exists  a constant $\delta_0\in(0,1)$ such that the following holds.\\
For any $\eps, \lambda>0$ with $\delta_0^{-1}\eps<\lambda<\delta_0$, let $u_+:=u_--\eps$. Then there exists  a smooth monotone function $a:\bbr\to\bbr^+$ with $\lim_{x\to\pm\infty} a(x)=1+a_{\pm}$ for some  constants $a_-, a_+$ with $|a_+-a_-|=\lambda$ such that the following holds.\\
Let $S$ be the viscous shock connecting $u_-$ and $u_+$ as a solution of \eqref{shock}.
For any $T>0$, and any initial data $u_0\in L^\infty(\bbr)$ satisfying $\int_{-\infty}^{\infty} \eta(u_0| S) dx<\infty$, the equation \eqref{main} admits a bounded weak solution $u$ such that
\beq\label{cont_main}
\int_{-\infty}^{\infty} a(x) \eta\big(u(t,x+X(t))|S (x)\big) dx \le \int_{-\infty}^{\infty} a(x) \eta\big(u_0(x)|S (x)\big) dx, \quad t\le T,
\eeq
Here, there exists a unique absolutely continuous shift $X$  such that $X\in W^{1,1}_{loc}((0,T))$ and
\begin{align}
\begin{aligned} \label{est-shift}
&|\dot X(t)|\le \frac{1}{\eps^2}(1 + h(t)),\quad t\le T,\\
&\mbox{for some positive function $h$ satisfying}\quad\|h\|_{L^1(0,T)} \le\frac{2\lambda}{\delta_0\eps}\int_{-\infty}^{\infty} \eta(u_0| S) dx.
\end{aligned}
\end{align}
\end{theorem}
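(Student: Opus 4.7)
The plan is to adapt the ``$a$-contraction up to a shift'' method of \cite{Kang-V-NS17} to the scalar setting, working with the weighted relative-entropy functional
\[
\CalE(t):=\int_\bbr a(x)\,\eta\bigl(U(t,x)\,\big|\,S(x)\bigr)\,dx,\qquad U(t,x):=u(t,x+X(t)),
\]
instead of a plain $L^2$-functional; here $\eta(u|v):=\eta(u)-\eta(v)-\eta'(v)(u-v)$ is the relative entropy built from the convex entropy $\eta$ supplied by $(\mathcal H1)$--$(\mathcal H2)$, and the target is $\CalE(t)\le\CalE(0)$. The weight $a$ is taken smooth and nondecreasing with $a(\pm\infty)=1+a_\pm$, $\|a'\|_{L^1}=|a_+-a_-|=\lambda$, and $a'$ supported on a shock-length interval of size of order $\eps^{-1}$, rescaling the weight construction of \cite{Kang-V-NS17}. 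The shift $X(t)$ is obtained as the Carath\'eodory solution of an ODE $\dot X(t)=\Phi[U(t,\cdot)]$, with $\Phi$ chosen to cancel the single linear-in-$(\eta'(U)-\eta'(S))$ obstruction to dissipativity that will arise in the time derivative of $\CalE$.

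A direct computation using \eqref{main} for $u$, \eqref{shock} for $S$, and integration by parts produces a decomposition
\[
\frac{d}{dt}\CalE(t)\;=\;\dot X(t)\,Y[U]\;+\;\CalB[U]\;-\;\CalG[U],
\]
in which $\CalG[U]\ge 0$ gathers the parabolic good terms $\int a\,\eta''(U)|U_x|^2\,dx$ and a favorable profile-dissipation term generated by the monotonicity of $a$ together with $S'<0$ from \eqref{shock-1}, $Y[U]$ is linear in $\eta'(U)-\eta'(S)$, and $\CalB[U]$ collects the hyperbolic flux-mismatches of schematic form $\int a'\bigl[-\sigma\eta(U|S)+q(U|S)\bigr]dx$, with $q$ as in \eqref{def-q}, plus diffusive cross-terms carrying the factor $a'$. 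Setting $\dot X(t):=-M(\eps)\,Y[U](t)$ with $M(\eps)$ of order $\eps^{-2}$ converts $\dot X\,Y$ into $-M|Y|^2\le 0$ and simultaneously cancels the linear obstruction in $\CalB$; the bound $|\dot X|\le\eps^{-2}(1+h(t))$ with $\|h\|_{L^1(0,T)}\le 2\lambda/(\delta_0\eps)\,\CalE(0)$ required in \eqref{est-shift} then follows by Cauchy--Schwarz, $(\mathcal H1)$, and the scalings $\|a'\|_{L^1}=\lambda$, $|\mathrm{supp}(a')|\sim\eps^{-1}$, once $\CalE(t)\le\CalE(0)$ has been propagated.

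The main obstacle, and the true novelty compared with \cite{Kang-V-1}, is the pointwise functional inequality $\CalB[U]\le\CalG[U]$ for an \emph{arbitrary} strictly convex $f$; I plan to split the analysis along the dichotomy built into $(\mathcal H2)$. In the bounded regime $\{|U|\le 2\theta\}$, Taylor-expand $\eta(U|S)$, $q(U|S)$, $\eta'(U|S)$ and $f(U)-f(S)$ about $S$; the lower bound $\eta''\ge\alpha$ from $(\mathcal H1)$ together with the Lipschitz-type controls $(\mathcal H2)$(ii)--(iii) reduce the integrand of $\CalB$ to a quadratic form in $\eta'(U)-\eta'(S)$ whose leading coefficient carries the sign of $-\sigma+f'(S)$ and is strictly negative by strict convexity of $f$ at $u_-$, and a Poincar\'e-type inequality on $\mathrm{supp}(a')$ of the type established in \cite{Kang-V-NS17} absorbs this form into $\CalG$ with a positive gap as long as $\delta_0^{-1}\eps<\lambda<\delta_0$ is small. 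In the far-field regime $\{|U|>2\theta\}$, the asymmetric one-sided controls $(\mathcal H2)$(i), (iv) dominate the nonquadratic pieces of $\CalB$ by linear expressions in $\eta(U|S)$, which in turn are absorbed into $\CalG$ thanks to the coercivity $\eta''''\ge\alpha$ from $(\mathcal H1)$ combined with $\int a\,\eta''(U)|U_x|^2\,dx$; the two regimes glue precisely because the indicator functions in $(\mathcal H2)$ are tailored to this split. Finally, the transfer of the a priori contraction from smooth to bounded weak solutions is standard: mollify $u_0$, solve \eqref{main} by parabolic theory, construct shifts $X_n$ with \eqref{est-shift} uniform in $n$, and pass to the limit via $W^{1,1}_{loc}$-compactness of $(X_n)$ and local compactness of $(u_n)$.
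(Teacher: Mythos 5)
Your proposal captures the correct high-level architecture — the weighted relative entropy $\int a\,\eta(u^X|S)$, a Carath\'eodory shift, the decomposition $\frac{d}{dt}\int a\,\eta(u^X|S)=\dot X\,Y+\mathcal B-\mathcal G$, rescaling to a nonlinear Poincar\'e inequality on $(0,1)$ as in \cite{Kang-V-NS17}, and the $(\mathcal H2)$-based separation of a bounded regime from a far-field regime. But there are two genuine gaps.

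\textbf{The shift ODE.} You propose $\dot X=-M(\eps)\,Y[U]$ with $M\sim\eps^{-2}$. This does not give the a priori dissipativity for arbitrary $u\in L^\infty$, nor the bound \eqref{est-shift}. The issue is that $Y$ has no a priori smallness, and the Poincar\'e inequality that closes the argument is only available after reducing to a regime where $|Y|$ is small (of order $\eps^2/\lambda$ after truncation). When $|Y|$ is large, $-MY^2+\mathcal B-\mathcal G\le 0$ is not clear since $\mathcal B$ can be of comparable size to $Y^2$. The paper circumvents this by taking $\dot X(t)=\Phi_\eps(Y(u^X))\bigl(2|\mathcal B(u^X)|+1\bigr)$ with $\Phi_\eps$ saturated at $\pm\eps^{-2}$ (see \eqref{Phi-d}--\eqref{X-def}): for $|Y|\ge\eps^2$ this forces $\dot X\,Y\le -2|\mathcal B|$ so that the functional is nonpositive regardless of what $\mathcal B$ does, and only for $|Y|\le\eps^2$ is the quadratic $-\eps^{-4}Y^2$ retained and fed into the Poincar\'e inequality (Proposition~\ref{prop:main}). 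The saturation also makes the Carath\'eodory bound $\sup_x|g(t,x)|\le m(t)\in L^1$ immediate and, together with the dissipation $\frac{d}{dt}\int a\eta(u^X|S)\le-\delta_0\frac{\eps}{\lambda}|\mathcal B|$, delivers exactly the estimate \eqref{est-shift}. With a purely linear choice $\dot X=-MY$, none of these three deductions goes through.

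\textbf{The target inequality.} You state the key step as the ``pointwise functional inequality $\mathcal B[U]\le\mathcal G[U]$.'' This inequality is false in general: after rescaling to the $W$-variable on $(0,1)$, the leading-order content of $\mathcal B-\mathcal G$ is $(1+\delta)\int_0^1 W^2+\tfrac{2}{3}\int_0^1 W^3+\delta\int_0^1|W|^3-(1-\delta)\int_0^1 y(1-y)|\partial_yW|^2$, which is strictly positive for, e.g., small negative constants $W$. The inequality closes only because the shift supplies $-\frac1\delta\bigl(\int_0^1W^2\,dy+2\int_0^1W\,dy\bigr)^2$, and the Poincar\'e lemma (Lemma~\ref{lem_poincare}) is genuinely an inequality for the \emph{sum} of this negative quadratic with the bad terms. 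In other words, you cannot first use the shift to ``cancel the linear obstruction'' and then prove $\mathcal B\le\mathcal G$; you must keep the $-Y^2$ term coupled to the Poincar\'e inequality as in Proposition~\ref{prop:near}. Relatedly, you gloss over the truncation $u\mapsto\bar u$ (replacing the values with $|\eta'(u)-\eta'(S)|>\delta_1$) and the control of $|\mathcal B_i(u)-\mathcal B_i(\bar u)|$ and $|Y(u)-Y(\bar u)|$ by $\sqrt{\eps/\lambda}\,\mathcal D(u)$, which is where the $(\mathcal H2)$ hypotheses and the dissipation $\mathcal D$ are actually used (Proposition~\ref{prop_out} and Lemma~\ref{lemma_oublie}); a domain-split of the integrand, as you describe, does not by itself give estimates uniform in the level of truncation.
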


%\begin{remark}
%Notice that since the weight function $a(x)$ satisfies $|a-1|\le \delta_0<1$, the contraction estimate \eqref{cont_main} implies that there exists a constant $C>0$ such that
%\[
%\int_{-\infty}^{\infty} \eta\big(u(t,x+X(t))|S (x)\big) dx \le C \int_{-\infty}^{\infty} \eta\big(u_0(x)|S (x)\big) dx.
%\]
%\end{remark}

\begin{remark}
The contraction \eqref{cont_main} implies the uniform $L^2$-stability (up to shift) in the following sense: there exists a positive constant $C=C(\alpha, \delta_0, \|u_0\|_{L^\infty(\bbr)})$ such that
\beq\label{L2stab}
\int_{-\infty}^{\infty} |u(t,x+X(t))-S (x)|^2 dx \le C \int_{-\infty}^{\infty} | u_0(x)-S (x)|^2 dx.
\eeq
Indeed, since
 $\|u\|_{L^\infty(\bbr)} \le \|u_0\|_{L^\infty(\bbr)}$ by the maximum principle, it follows from \eqref{e-est0} and \eqref{releq} that the relative entropy is comparable with the $L^2$-distance. Moreover, since $\|a-1\|_{L^\infty(\bbr)}\le\lambda<\delta_0<1$, the contraction \eqref{cont_main} implies the above $L^2$-stability.
 \end{remark}

\begin{remark}
From the condition $\delta_0^{-1}\eps<\lambda<\delta_0$ of Theorem \ref{thm_general}, we have smallness conditions on three quantities $\eps, \eps/\lambda$ and $\lambda$, since $\eps<\delta_0^2, ~\eps/\lambda<\delta_0$.   
Thus the contraction \eqref{cont_main} holds for shocks with small amplitude $\eps$. 
However this result improves \cite[Theorem 1.1]{Kang-V-1} in the following sense. Theorem \ref{thm_general} deals with any strictly convex fluxes satisfying the exponential growth condition \eqref{growth} while \cite[Theorem 1.1]{Kang-V-1} covered only the approximated Burgers fluxes.
We also mention the contraction \eqref{cont_main} is measured by the relative entropy with the weight $a$, while the contraction of \cite[Theorem 1.1]{Kang-V-1} is measured by $L^2$-distance.
\end{remark}

 \vskip0.3cm

In fact, the growth condition \eqref{growth} and the hypotheses (${\mathcal H}$2) can be relaxed in any bounded interval $[-M,M]$, as follows.\\
Let $f$ be a smooth (or ${C}^3$) strictly convex flux for \eqref{main}. Then, there exists a strictly convex entropy $\eta$ such that $\eta$ satisfying the hypotheses (${\mathcal H}1$) and the relaxed hypotheses of (${\mathcal H}2$):
\begin{itemize}
\item  
(${\mathcal H}2'$): For a given constant $M>0$, there exists a constant $C>0$ such that 
for any $u,v\in \bbr$ satisfying $ |u| \le M $ and $|v|\le M/2$, the following inequalities holds:\\

(i) $ |\eta'(u)-\eta'(v)|^2  \le C \eta(u|v)$\\

(ii) $|f(u)-f(v)|\le C |\eta'(u)-\eta'(v)|$\\

(iii) $|\eta''(u)-\eta''(v)|\le C |\eta'(u)-\eta'(v)|$\\

(iv) $\Big|\int^u_v \eta''(w)f(w)\, dw \Big| \le C |\eta'(u)-\eta'(v)| $.
\end{itemize}

In Appendix \ref{app-entropy}, we present the proof of the existence of entropies verifying the above hypotheses (${\mathcal H}1$) and (${\mathcal H}2'$).
Under an entropy satisfying the above hypotheses, Theorem \ref{thm_general} implies the following Corollary.

\begin{corollary}\label{coro}
Consider an equation \eqref{main} with any smooth (or ${C}^3$) strictly convex flux $f$.
For given constants $u_-\in \bbr$ and $M>0$ satisfying $f''(u_-)>0$ and $M\ge 6|u_-|$,
let $\eta$ be an entropy of \eqref{main} satisfying the hypotheses (${\mathcal H}1$) and (${\mathcal H}2'$) with the constant $M$.\\
Then, there exists  a constant $\delta_0\in(0,1)$ such that the following holds.\\
For any $\eps, \lambda>0$ with $\delta_0^{-1}\eps<\lambda<\delta_0$, let $u_+:=u_--\eps$. Then there exists  a smooth monotone function $a:\bbr\to\bbr^+$ with $\lim_{x\to\pm\infty} a(x)=1+a_{\pm}$ for some  constants $a_-, a_+$ with $|a_+-a_-|=\lambda$ such that the following holds.\\
Let $S$ be the viscous shock connecting $u_-$ and $u_+$ as a solution of \eqref{shock}.
For any $T>0$, and any initial data satisfying $\|u_0\|_{L^\infty(\bbr)} \le M$ and $\int_{-\infty}^{\infty} \eta(u_0| S) dx<\infty$, the equation \eqref{main} admits a bounded weak solution $u$ such that
\eqref{cont_main} and \eqref{est-shift} hold true.
\end{corollary}

\begin{remark}
Corollary \ref{coro} improves \cite[Theorem 1.2]{Kang-V-1} in the following sense. Since it follows from \cite[Theorem 1.2]{Kang-V-1} that there exist a strictly convex flux $f$ and smooth initial perturbation $u_0-S$ (thus, $u_0\in L^\infty$) such that the $L^2$-contraction property fails even for small shocks. 
However, Corollary \ref{coro} provides the contraction property for any strictly convex flux and any bounded initial perturbations.
\end{remark}

\begin{remark}
Contrary to Theorem \ref{thm_general}, the constant $\delta_0$ of Corollary \ref{coro} depends on $M$.
%in fact, $\delta_0\to 0$ as $M\to \infty$.
Therefore, in Corollary \ref{coro}, choice of $M$ may affect the range of choices of shock strength $\eps$, and of $L^\infty$-norm of $u_0$, instead the growth condition \eqref{growth} is not needed anymore. 
\end{remark}

\begin{remark}\label{rem-s}
 Notice that it is enough to prove Theorem \ref{thm_general} (and Corollary \ref{coro}) only for shocks moving at positive velocity, i.e., $\s>0$ in \eqref{RH-con}.
Indeed, if $\s<0$ or $\s=0$, then we consider  \eqref{main} with a strictly convex flux $g(u)=-2\s u+f(u)$ or $g(u)=\s u +f(u)$ respectively instead of the original flux $f$. 
Therefore from now on, we assume $\s>0$.\\
\end{remark}

 \vskip0.3cm

The rest of the paper is organized as follows. In Section 2, we present useful lemmas for the proof of main result. The Section 3 and 4 are devoted to the proof of Theorem \ref{thm_general}. 

 \vskip0.3cm

$\bullet$ {\bf Notations :}
Throughout the paper, $C$ denotes a positive constant which may change from line to line, but which stays independent on $\eps$ (the shock strength) and $\lambda$ (the total variation of the function $a$). %and the small constant $\eps_0, \delta_i,~i=0,\cdots,4$. We sometimes use $f\lesssim g$ to denote $f\le C g$.
The paper will consider two smallness conditions, one on $\eps$, and the other on $\eps/\lambda$.

%%%%%%%%%%%%%%%%%%%%%%%%%%%%%%%%%%%%%%%%%%%%%
\section{Preliminaries}\label{section_pre}
\setcounter{equation}{0}
In this section, we present properties of small shocks and the relative entropy, and a key representation by the relative entropy method.

\subsection{Small shock waves}
In this subsection, we present  useful properties of the shock waves $S$ with small amplitude $u_--u_+=\eps$. 

In the sequel, without loss of generality, we consider the shock $S$ satisfying $S(0)=\frac{u_-+u_+}{2}$. 

\begin{lemma}
For any $n_-\in\bbr$ with $f''(u_-)>0$,  there exists $\eps_0>0$ such that for any $0<\eps<\eps_0$ the following is true. 
Let $S$ be the shock wave connecting end states $u_-$ and $u_+:=u_--\eps$ such that $S(0)=\frac{u_-+u_+}{2}$. Then, there exist constants $C, C_1, C_2>0$ such that
\beq\label{tail}
-C^{-1}\eps^2 e^{-C_1 \eps |\xi|} \le S'(\xi) \le -C\eps^2 e^{-C_2 \eps |\xi|},\quad \forall\xi\in\bbr,
\eeq
as a consequence,
\beq\label{low-S}
\inf_{\left[-\frac{1}{\eps},\frac{1}{\eps}\right]}| S' |\ge C\eps^2.
\eeq
Moreover,
\beq\label{sec-S}
|S''(\xi)|\le C\eps |S'(\xi)|,\quad \forall\xi\in\bbr.
\eeq

\end{lemma}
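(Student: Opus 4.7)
The proof hinges on the autonomous ODE satisfied by $S$: from \eqref{shock-1},
\[
S'(\xi) = F(S(\xi)), \qquad F(w) := f(w) - f(u_-) - \sigma(w - u_-),
\]
with $F(u_\pm) = 0$ by the Rankine--Hugoniot relation \eqref{RH-con}. The plan is to extract a quantitative logistic-type factorization of $F$ on $[u_+,u_-]$ and then read off \eqref{tail}--\eqref{sec-S} by elementary ODE comparison with explicit logistic profiles.

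Since $F$ vanishes at both $u_\pm$, I can write $F(w) = (w-u_-)(w-u_+)\,g(w)$ for a smooth $g$. Taylor-expanding $f$ around the midpoint $u_m := (u_-+u_+)/2$ --- where \eqref{RH-con} gives $\sigma = f'(u_m) + O(\eps^2)$ --- yields
\[
F(w) = \tfrac{1}{2}f''(u_m)(w-u_-)(w-u_+) + O(\eps^3),\qquad w\in[u_+,u_-],
\]
so $g(w) = \tfrac{1}{2}f''(u_-) + O(\eps)$ uniformly on $[u_+,u_-]$ (replacing $u_m$ by $u_-$ costs only an extra $O(\eps)$). Since $u_+\le S\le u_-$, the product $(S-u_-)(S-u_+)$ is nonpositive with modulus at most $\eps^2/4$, so for $\eps<\eps_0$ small enough I immediately obtain $S' < 0$ everywhere, $|S'|\le C\eps^2$, and strict monotonicity of $S$.

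For the two-sided exponential tail \eqref{tail}, I change variables to $\psi := u_- - S \in (0,\eps)$, smoothly increasing from $0$ at $-\infty$ to $\eps$ at $+\infty$ with $\psi(0) = \eps/2$. The ODE becomes $\psi' = g(u_--\psi)\,\psi(\eps-\psi)$, a perturbed logistic equation whose coefficient satisfies $g(u_--\psi) = \tfrac{1}{2}f''(u_-)(1+O(\eps)) > 0$. Sandwiching by two constant-coefficient logistic ODEs with rates $\nu = \tfrac{1}{2}f''(u_-)(1+O(\eps))$ and the explicit profile $\eps/(1+e^{-\nu\eps(\xi-\xi_0)})$ (with $\xi_0$ fixed by $\psi(0)=\eps/2$) yields
\[
C^{-1}\eps\, e^{\nu\eps\xi} \le \psi(\xi) \le C\eps\, e^{\nu\eps\xi} \ \text{for } \xi\le 0, \qquad C^{-1}\eps\, e^{-\nu\eps\xi} \le \eps-\psi(\xi) \le C\eps\, e^{-\nu\eps\xi} \ \text{for } \xi\ge 0.
\]
Substituting back into $S' = -g(S)\,\psi(\eps-\psi)$ then produces both halves of \eqref{tail} with $C_1,C_2$ proportional to $f''(u_-)$ and $\eps$-independent; \eqref{low-S} follows by restricting the lower bound for $|S'|$ in \eqref{tail} to $|\xi|\le 1/\eps$, where $e^{-C_2\eps|\xi|}\ge e^{-C_2}$.

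Finally, \eqref{sec-S} is direct: differentiating $S'=F(S)$ gives $S'' = F'(S)\,S'$, and Taylor expansion around $u_-$ yields $F'(w) = f'(w) - \sigma = f''(u_-)(w-u_-) + \tfrac{1}{2}f''(u_-)\eps + O(\eps^2) = O(\eps)$ uniformly for $w\in[u_+,u_-]$. The main obstacle throughout is the logistic step: careful bookkeeping of the $O(\eps)$ corrections in the coefficient $g$ is required so that the exponential rates $C_1,C_2$ and prefactors in \eqref{tail} remain genuinely $\eps$-independent constants after the comparison with the explicit profiles. Everything else is direct Taylor expansion.
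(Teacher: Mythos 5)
Your argument is correct and is essentially the same as the paper's. Both proofs exploit the same logistic structure: write $S' = F(S)$ with $F(u_\pm)=0$, factor out $(S-u_-)(S-u_+)$, identify the remaining coefficient as approximately $\tfrac12 f''(u_-)$, and integrate the resulting ODE near each endpoint to obtain the exponential tails and \eqref{low-S}; the proof of \eqref{sec-S} from $S'' = (f'(S)-\sigma)S'$ and $|f'(S)-\sigma|\le C\eps$ is identical. The difference is cosmetic: you compare $\psi = u_--S$ directly to explicit constant-coefficient logistic profiles, whereas the paper factors out $S-u_+$ first, writes $S'/(S-u_+) = \varphi(S)-\varphi(u_-)$ with $\varphi$ the divided difference of $f$, and integrates piecewise linear differential inequalities on $\xi\le0$ and $\xi\ge0$. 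One small bookkeeping point worth tightening: your step from ``$F(w) = \tfrac12 f''(u_m)(w-u_-)(w-u_+) + O(\eps^3)$'' to ``$g(w) = \tfrac12 f''(u_-) + O(\eps)$ uniformly'' implicitly divides an $O(\eps^3)$ error by a product that vanishes at the endpoints; that quotient is bounded only because $F$ and the leading term share the same zeros. The clean justification is the second-divided-difference (Hermite--Genocchi) identity $g(w) = F[w,u_-,u_+] = \tfrac12 f''(\zeta_w)$ for some $\zeta_w\in[u_+,u_-]$, which gives the uniform estimate immediately; this mirrors the paper's use of $\varphi'(u_-) = \tfrac12 f''(u_*)$ via the mean value theorem.
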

\begin{proof} 
$\bullet$ {\it proof of \eqref{tail}-\eqref{low-S}} :
We recall \eqref{shock-1} as
\[
S' = (S-u_+) \Big( \frac{f(S)-f(u_+)}{S-u_+} -\frac{f(u_-)-f(u_+)}{u_--u_+} \Big).
\]
Then we have
\[
\frac{S'}{S-u_+}= \frac{f(S)-f(u_+)}{S-u_+} -\frac{f(u_-)-f(u_+)}{u_--u_+} .
\]
Let  $\varphi:\bbr^+\to\bbr$ be a smooth function defined by 
\[
\varphi(u):=\frac{f(u)-f(u_+)}{u-u_+}.
\]
Then, the above equality can be written as
\beq\label{temm}
\frac{S'}{S-u_+}= \varphi(S)-\varphi(u_-).
\eeq
Since $0<u_--S<\eps<\eps_0$, taking $\eps_0$ small enough, there exists a constant $C>0$ (depending only on $u_-$) such that
\beq\label{apply-v}
|\varphi(S)-\varphi(u_-)-\varphi'(u_-)(S-u_-)|\le C(S-u_-)^2 \le C\eps_0 (u_- -S).
\eeq
Notice that since
\[
\varphi'(u_-)=\frac{f'(u_-)(u_--u_+)-\big(f(u_-)-f(u_+)\big)}{(u_--u_+)^2}=\frac{f''(u_*)}{2}\quad\mbox{for some }~ u_*\in(u_+,u_-),
\]
using the continuity of $f''$ and taking $\eps_0$ small enough, we have
\[
f''(u_-) \ge \varphi'(u_-) \ge \frac{f''(u_-)}{4}>0\quad\mbox{(by }~f''(u_-)>0 ).
\]
That is, $\varphi'(u_-)$ is bounded from below and above uniformly in $\eps$.\\
Therefore, for $\eps_0$ small enough, we have
\[
\frac{f''(u_-)}{8} (u_- -S) \le  \varphi(u_-) -\varphi(S) \le 2f''(u_-) (u_- -S).
\]
Then, it follows from \eqref{temm} that
\beq\label{gein}
2f''(u_-) (S -u_-)(S-u_+)  \le S' \le\frac{f''(u_-)}{8}  (S-u_-) (S-u_+) .
\eeq
To prove the estimate \eqref{tail}, we first observe that since $S'<0$ and $S(0)=\frac{u_-+u_+}{2}$, we have
\begin{align}
\begin{aligned}\label{bothcase}
&\xi\le0 ~\Rightarrow~u_--u_+ \ge S(\xi)-u_+\ge S(0)-u_+=\frac{u_--u_+}{2},\\
&\xi\ge0 ~\Rightarrow~u_--u_+ \ge u_--S(\xi)\ge u_-- S(0)=\frac{u_--u_+}{2}.
\end{aligned}
\end{align} 
Then, using \eqref{gein} and \eqref{bothcase} with $u_--u_+=\eps$, we have 
\begin{align*}
\begin{aligned}
&\xi\le0 ~\Rightarrow~-C^{-1}\eps (u_- - S)\le S' \le -C\eps (u_- - S),\\
&\xi\ge0 ~\Rightarrow~-C^{-1}\eps (S -u_+)\le S' \le -C\eps (S -u_+).
\end{aligned}
\end{align*} 
Thus,
\begin{align*}
\begin{aligned}
&\xi\le0 ~\Rightarrow~-C^{-1}\eps (u_- -S)\ge (u_- -S)' \ge -C\eps (u_- - S),\\
&\xi\ge0 ~\Rightarrow~-C^{-1}\eps (S -u_+) \le (S-u_+)' \le -C\eps (S -u_+).
\end{aligned}
\end{align*} 
These together with $S(0)=\frac{u_-+u_+}{2}$ imply
\begin{align*}
\begin{aligned}
&\xi\le0 ~\Rightarrow~C^{-1}\eps e^{-C_2\eps|\xi|}\le u_- -S \le C\eps e^{-C_1\eps|\xi|},\\
&\xi\ge0 ~\Rightarrow~C^{-1}\eps e^{-C_2\eps\xi}\le S-u_+ \le C\eps e^{-C_1\eps\xi}.
\end{aligned}
\end{align*} 
Finally, applying the above estimate together with $|S-v_\pm|\le \eps$ to \eqref{gein},
we have \eqref{tail}.\\
As a consequence, \eqref{low-S} follows from the upper bound in \eqref{tail}.\\

$\bullet$ {\it proof of \eqref{sec-S}} :
We first observe that it follows from \eqref{shock} that
\[
S'' =-\s S' +f'(S)S' = \Big( -\s +f'(S)\Big) S'.
\]
Using Taylor theorem with $\eps_0$ small enough, we have
\[
|\s - f'(u_-)| \le C\eps \quad \mbox{by }  \eqref{RH-con},
\]
and 
\[
|f'(S)-f'(u_-)|\le C|S-u_-|\le C\eps.
\]
Therefore, we have
\beq\label{app-sf}
|\s -f'(S)|\le C\eps,
\eeq
which gives the desired estimate.
\end{proof}

\subsection{Relative entropy method}
Our analysis is  based on the relative entropy. The method  is purely nonlinear, and allows to handle rough and large perturbations. The relative entropy method was first introduced by Dafermos \cite{Dafermos1} and Diperna \cite{DiPerna} to prove the $L^2$ stability and uniqueness of Lipschitz solutions to the hyperbolic conservation laws endowed with a convex entropy.

To use the relative entropy method, we first rewrite the viscous term of \eqref{main} into the following form:
\beq\label{eq-0}
u_t + f(u)_x = \Big(\mu(u) \left( \eta'(u)\right)_x \Big)_x
\eeq
where 
\[
\mu(u):=\frac{1}{\eta''(u)}.
\]
Note that $0<\mu(u)\le \alpha^{-1}$ for any $u$ by the hypothesis ($\mathcal{H}1$). 

For simplification of our analysis, we use the change of variable $(t,x)\mapsto (t, \xi=x-\s t)$ to rewrite \eqref{eq-0} into  
\beq\label{eq-1}
u_t -\s u_\xi + f(u)_\xi = \Big(\mu(u) \left( \eta'(u)\right)_\xi \Big)_\xi.
\eeq
Let $A(u):=-\s u +f(u)$. Then \eqref{eq-1} can be rewritten into a simpler form
\beq\label{eq-2}
u_t + A(u)_\xi = \Big(\mu(u) \left( \eta'(u)\right)_\xi \Big)_\xi.
\eeq
Also, \eqref{shock} can be written into
\beq\label{shock-2}
A(S)_\xi = \Big(\mu(S) \left( \eta'(S)\right)_\xi \Big)_\xi.
\eeq

In what follows, we will use the general notation $\mathcal{F}(\cdot|\cdot)$ to denote the relative functional of $\mathcal{F}$ (such as the relative entropy), i.e.,
\[
\mathcal{F}(u|v):= \mathcal{F}(u)-\mathcal{F}(v)-\mathcal{F}'(v)(u-v).
\]
For example, for the fluxes $f$ and $A$, $f(\cdot|\cdot)$ and $A(\cdot|\cdot)$ denote the relative fluxes.\\
By the definition, it is obvious to find
\beq\label{Af}
A(u|v)=f(u|v).
\eeq
On the other hand, we recall the entropy flux $q$ defined by \eqref{def-q}, i.e.,
\[
q'=\eta'f'.
\]
We also let $G$ to denote the entropy flux associated with $A$, i.e.,
\beq\label{def-G}
G'=\eta' A'.
\eeq
Since $A' = -\s + f'$, we find
\[
G' =\eta' A' = \eta' (f' -\s) =q'  -\s \eta' ,
\]
We use the following notations (called fluxes of relative entropy)
\begin{align*}
\begin{aligned}
&q(u;v) = q(u)-q(v) - \eta'(v) (f(u)-f(v)),\\
&G(u;v) = G(u)-G(v) - \eta'(v) (A(u)-A(v)).
\end{aligned}
\end{align*}
Then, we find
\beq\label{Gq}
G(u;v)=q(u;v) -\s \eta(u|v).
\eeq

We will consider a weighted relative entropy between the solution $u$ and the viscous shock $S$ up to a shift $X(t)$ :
\[
a(\xi)\eta\big(u(t,\xi+X(t))| S(\xi)\big),
\]
where the weight $a$ and the shift $X$ will be defined later.\\
The following Lemma provides a quadratic structure on $\frac{d}{dt}\int_{\bbr} a(\xi)\eta\big(U(t,\xi+X(t))|\tilde U_\eps(\xi)\big) d\xi$. 
For simplification, we introduce the following notation: for any function $f : \bbr^+\times \bbr\to \bbr$ and the shift $X(t)$, 
\[
f^{\pm X}(t, \xi):=f(t,\xi\pm X(t)).
\]
We also introduce the functional space
\beq\label{solsp}
\mathcal{H}:=\{u~|~  u \in L^{\infty}(\bbr),~ \partial_\xi \big(\eta'(u)-\eta'(S) \big)\in L^2( \bbr) \},
\eeq
on which the below functionals $Y, \mathcal{B}, \mathcal{G}$ in \eqref{badgood} are well-defined.\\
The space $\mathcal{H}$ will be rigorously handled in Section \ref{global-X}.

\begin{lemma}\label{lem-rel}
Let $a:\bbr\to\bbr^+$ be a smooth bounded function such that $a'$ is bounded. Let $X:\bbr\to\bbr$ be an absolutely continuous function. Let $S$ be the viscous shock in \eqref{shock-2} (or \eqref{shock}). For any solution $u\in\mathcal{H}$ to \eqref{eq-2} (or \eqref{main}), we have 
\begin{align}
\begin{aligned}\label{ineq-1}
\frac{d}{dt}\int_{\bbr} a(\xi)\eta(u^X(t,\xi)|S(\xi)) d\xi =\dot X(t) Y(u^X) +\mathcal{B}(u^X)- \mathcal{G}(u^X),
\end{aligned}
\end{align}
where
\begin{align}
\begin{aligned}\label{badgood}
&Y(u):= -\int_\bbr a'\eta(u|S) d\xi +\int_\bbr a\partial_\xi\eta'(S) (u-S) d\xi,\\
&F(u):= -\int^u \eta''(v)f(v) dv\quad (\mbox{i.e., $F$ is an antiderivative of } -\eta'' f), \\ 
&\mathcal{B}(u):=  \int_\bbr a' F(u|S) d\xi+  \int_\bbr a' \left( \eta'(u) - \eta'(S) \right)\left( f(u) - f(S) \right) d\xi+ \int_\bbr a' f(S) (\eta')(u|S) d\xi\\
&\qquad\qquad - \int_\bbr a\eta''(S)S' f(u|S) d\xi - \int_\bbr a' \mu(u) \left( \eta'(u) - \eta'(S) \right)\partial_\xi  \left( \eta'(u) - \eta'(S) \right) d\xi\\
&\qquad\qquad - \int_\bbr a' \left( \eta'(u) - \eta'(S) \right) \left( \mu(u) - \mu(S) \right) \eta''(S)S' d\xi\\
&\qquad\qquad - \int_\bbr a \partial_\xi  \left( \eta'(u) - \eta'(S) \right) \left( \mu(u) - \mu(S) \right) \eta''(S)S' d\xi + \int_\bbr a S''  (\eta')(u|S) d\xi,\\
&\mathcal{G}(u):=\s \int_\bbr a' \eta(u|S) d\xi +  \int_\bbr a \mu(u) \left|\partial_\xi  \left( \eta'(u) - \eta'(S) \right)\right|^2 d\xi.
\end{aligned}
\end{align}
Note that $(\eta')(u|S)$ denotes $(\eta')(u|S):=\eta'(u)-\eta'(S)-\eta''(S)(u-S)$.
\end{lemma}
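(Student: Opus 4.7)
The plan is a direct computation typical of the relative entropy method: differentiate under the integral, replace $\partial_t u^X$ using the PDE, integrate by parts, and identify the resulting terms with $Y$, $\mathcal{B}$, and $\mathcal{G}$. Since $u$ solves \eqref{eq-2} in the traveling frame and $S$ is $t$-independent, the shifted function $u^X(t,\xi) := u(t,\xi+X(t))$ satisfies
\[
\partial_t u^X = \dot X \partial_\xi u^X - \partial_\xi A(u^X) + \partial_\xi\bigl(\mu(u^X)\partial_\xi\eta'(u^X)\bigr),
\]
and hence $\frac{d}{dt}\int a\,\eta(u^X|S)\,d\xi = \int a\,(\eta'(u^X)-\eta'(S))\,\partial_t u^X\,d\xi$ splits cleanly into shift, convective, and diffusive pieces. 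I write $u$ for $u^X$ and $w := \eta'(u)-\eta'(S)$ below.

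For the shift piece, a direct expansion of $\partial_\xi\eta(u|S)$ gives the pointwise identity $(\eta'(u)-\eta'(S))\partial_\xi u = \partial_\xi\eta(u|S) + \eta''(S)S'(u-S)$, and a single integration by parts produces exactly $\dot X\,Y(u^X)$. For the convective piece, the analogous identity $(\eta'(u)-\eta'(S))\partial_\xi A(u) = \partial_\xi G(u;S) + \eta''(S) S'(A(u)-A(S))$ (immediate from $G'=\eta' A'$) reduces the task to integrating $\int a'\,G(u;S)\,d\xi$ and $-\int a\,\eta''(S) S'(A(u)-A(S))\,d\xi$. Using \eqref{Gq}, the decomposition $A(u)-A(S) = f(u|S) + A'(S)(u-S)$ from \eqref{Af}, and the identity
\[
q(u;S) = F(u|S) + (\eta'(u)-\eta'(S))(f(u)-f(S)) + f(S)(\eta')(u|S)
\]
(obtained by integrating $q' = \eta' f'$ by parts to get $q(u) = \eta'(u) f(u) + F(u)$ and rearranging $(\eta'(u)-\eta'(S))f(u)$ with the splitting $f(u)=(f(u)-f(S))+f(S)$), the convective piece yields the first four $\mathcal{B}$-terms and the $\sigma$-piece of $-\mathcal{G}$, at the cost of a single leftover $-\int a\,A'(S)\eta''(S)S'(u-S)\,d\xi$.

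For the diffusive piece, after one integration by parts I write $\partial_\xi\eta'(u) = \partial_\xi w + \eta''(S) S'$ and, in each resulting term involving $\eta''(S) S'$, split $\mu(u) = (\mu(u)-\mu(S)) + \mu(S)$. Using $\mu(S)\eta''(S) = 1$ in the $\mu(S)$-pieces, I recover the fifth through seventh $\mathcal{B}$-terms and the dissipation $-\int a\,\mu(u)|\partial_\xi w|^2\,d\xi$ in $-\mathcal{G}$, at the cost of two leftover integrals $-\int a' S' w\,d\xi$ and $-\int a\,S'\partial_\xi w\,d\xi$; one more integration by parts on the second collapses both leftovers to $\int a\,S''\,w\,d\xi$.

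The proof closes with the following cancellation. Adding the diffusive leftover $\int a\,S''\,w\,d\xi$ to the convective leftover $-\int a\,A'(S)\eta''(S)S'(u-S)\,d\xi$, and using the structural identity $S'' = A'(S) S'$ that follows from \eqref{shock-2} together with $\mu(S)\eta''(S)=1$, the sum collapses to
\[
\int a\,S''\,\bigl[w - \eta''(S)(u-S)\bigr]\,d\xi = \int a\,S''\,(\eta')(u|S)\,d\xi,
\]
which is exactly the eighth term of $\mathcal{B}$. The main technical obstacle is purely organizational: there are many remainder pieces to keep track of through the repeated integrations by parts, and the whole derivation only closes thanks to the two structural identities $\mu(S)\eta''(S)=1$ and $S'' = A'(S)S'$ supplied by the shock ODE.
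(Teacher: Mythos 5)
Your proposal is correct, and the computation reaches exactly the decomposition \eqref{badgood}: the pointwise identities for the shift and convective pieces, the expansion of $q(u;S)$ as $F(u|S)+(\eta'(u)-\eta'(S))(f(u)-f(S))+f(S)(\eta')(u|S)$, the three-way split of the diffusive piece after one integration by parts, and the final cancellation via $\mu(S)\eta''(S)=1$ and $S''=A'(S)S'$ all check out. The route is the same at its core (relative-entropy direct computation plus integrations by parts), but the bookkeeping differs: the paper first performs the change of variable $\xi\mapsto\xi-X(t)$ so that the shift sits on $S$ and $a$, invokes the general formula from \cite[Lemma~4]{Vasseur_Book} to obtain the initial decomposition, and relativizes against the steady equation from the start (producing the relative flux $A(u|S^{-X})=f(u|S^{-X})$ immediately), then changes variables back at the very end. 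You instead differentiate directly in the shifted frame, never invoke the steady equation for $S$ until the last step, and instead carry three leftover integrals through the integrations by parts, collapsing them with $S''=A'(S)S'$ at the end. This buys a slightly more self-contained argument (no external lemma) and makes explicit exactly where the shock ODE is used, at the cost of having to track the remainders carefully; the paper's version is more systematic because the relativization absorbs the steady equation from the outset.
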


\begin{proof}
To derive the desired structure, we  use here a change of variable $\xi\mapsto \xi-X(t)$ as
\[
\int_{\bbr} a(\xi)\eta(u^X(t,\xi)|S(\xi)) d\xi=\int_{\bbr} a^{-X}(\xi)\eta(u(t,\xi)|S^{-X}(\xi)) d\xi.
\]
Let $G$ be the functional defined by \eqref{def-G}.\\
Then, by a straightforward computation together with \cite[Lemma 4]{Vasseur_Book} and the identity $G(u;v)=G(u|v)-\eta'(v)A(u|v)$, we have
\begin{align*}
\begin{aligned}
&\frac{d}{dt}\int_{\bbr} a^{-X}(\xi)\eta(U(t,\xi)|S^{-X}(\xi)) d\xi\\
&=-\dot X \int_{\bbr} \!a'^{-X} \eta(u|S^{-X} ) d\xi +\int_\bbr \!\!a^{-X}\bigg[\Big(\eta'(u)-\eta'(S^{-X})\Big)\!\Big(\!\!\!-\partial_\xi A(u)+ \partial_\xi\Big(\mu(u)\partial_\xi\eta'(u) \Big) \Big)\\
&\qquad -\eta''(S^{-X}) (u-S^{-X}) \Big(-\dot X \partial_\xi S^{-X} -\partial_\xi A(S^{-X})+ \partial_\xi\Big(\mu(S)\partial_\xi\eta'(S^{-X}) \Big)\Big)  \bigg] d\xi\\
&\quad =\dot X \Big( -\int_\bbr a'^{-X}\eta(u|S^{-X}) d\xi +\int_\bbr a^{-X}\partial_\xi \eta'(S^{-X}) (u-S^{-X})  d\xi \Big) +I_1+I_2+I_3,
\end{aligned}
\end{align*}
where 
\begin{align*}
\begin{aligned}
&I_1:=-\int_\bbr a^{-X} \partial_\xi G(u;S^{-X}) d\xi,\\
&I_2:=- \int_\bbr a^{-X} \partial_\xi \eta'(S^{-X}) A(u|S^{-X}) d\xi,\\
&I_3:=\int_\bbr a^{-X} \Big[ \lpa \eta'(u)-\eta'(S^{-X})\rpa   \partial_\xi\lpa\mu(u)\partial_\xi\eta'(u) \rpa \\
&\qquad\qquad\quad -\eta''(S^{-X}) (u-S^{-X}) \partial_\xi\lpa\mu(S^{-X})\partial_\xi\eta'(S^{-X}) \rpa  \Big] d\xi.
\end{aligned}
\end{align*}
Using \eqref{Af} and \eqref{Gq}, we have
\begin{align*}
\begin{aligned}
I_1&=\int_\bbr a'^{-X} G(u;S^{-X}) d\xi = \int_\bbr a'^{-X} q(u;S^{-X}) d\xi  -\sigma \int_\bbr a'^{-X} \eta(u|S^{-X}) d\xi,\\
I_2&=-\int_\bbr a^{-X} \eta''(S^{-X}) S'^{-X} f(u|S^{-X}) d\xi.
\end{aligned}
\end{align*}
For the parabolic part $I_3$, we first rewrite it into
\begin{align*}
\begin{aligned}
I_3&=\int_\bbr a^{-X}  \lpa \eta'(u)-\eta'(S^{-X})\rpa   \partial_\xi\lpa\mu(u)\partial_\xi \lpa \eta'(u) -\eta'(S^{-X}) \rpa  \rpa d\xi\\
&\quad+\int_\bbr a^{-X} \lpa \eta'(u)-\eta'(S^{-X})\rpa   \partial_\xi\lpa \lpa \mu(u) -\mu(S^{-X})\rpa \partial_\xi \eta'(S^{-X}) \rpa d\xi\\
&\quad+\int_\bbr a^{-X}  (\eta')(u|S^{-X})  \partial_\xi\lpa \mu(S^{-X})\partial_\xi \eta'(S^{-X}) \rpa d\xi\\
&=:I_{31}+I_{32}+I_{33}.
\end{aligned}
\end{align*}
Since 
\begin{align*}
\begin{aligned}
I_{31} &=- \int_\bbr a^{-X} \mu(u) \left|\partial_\xi  \left( \eta'(u) - \eta'(S^{-X}) \right)\right|^2 d\xi \\
&\qquad - \int_\bbr a'^{-X} \mu(u) \left( \eta'(u) - \eta'(S^{-X}) \right)\partial_\xi  \left( \eta'(u) - \eta'(S^{-X}) \right) d\xi,\\
I_{32} &= - \int_\bbr a'^{-X} \left( \eta'(u) - \eta'(S^{-X}) \right) \left( \mu(u) - \mu(S^{-X}) \right) \eta''(S^{-X})S'^{-X} d\xi\\
&\qquad - \int_\bbr a^{-X} \partial_\xi  \left( \eta'(u) - \eta'(S^{-X}) \right) \left( \mu(u) - \mu(S^{-X}) \right) \eta''(S^{-X})S'^{-X} d\xi ,\\
I_{33} &= \int_\bbr a^{-X} S''^{-X}  (\eta')(u|S^{-X}) d\xi,\\
\end{aligned}
\end{align*}
we have 
\begin{align*}
\begin{aligned}
&\frac{d}{dt}\int_{\bbr} a^{-X}\eta(u|S^{-X}) d\xi\\
&\quad =\dot X \Big( -\int_\bbr a'^{-X}\eta(u|S^{-X}) d\xi +\int_\bbr a^{-X}\partial_\xi \eta'(S^{-X}) (u-S^{-X}) d\xi \Big)\\
&\qquad + \int_\bbr a'^{-X} q(u;S^{-X}) d\xi  -\sigma \int_\bbr a'^{-X} \eta(u|S^{-X}) d\xi -\int_\bbr a^{-X} \eta''(S^{-X}) S'^{-X} f(u|S^{-X}) d\xi\\
&\qquad - \int_\bbr a^{-X} \mu(u) \left|\partial_\xi  \left( \eta'(u) - \eta'(S^{-X}) \right)\right|^2 d\xi +\int_\bbr a^{-X} S''^{-X}  (\eta')(u|S^{-X}) d\xi \\
&\qquad- \int_\bbr a'^{-X} \mu(u) \left( \eta'(u) - \eta'(S^{-X}) \right)\partial_\xi  \left( \eta'(u) - \eta'(S^{-X}) \right) d\xi\\
&\qquad - \int_\bbr a'^{-X} \left( \eta'(u) - \eta'(S^{-X}) \right) \left( \mu(u) - \mu(S^{-X}) \right) \eta''(S^{-X})S'^{-X} d\xi\\
&\qquad - \int_\bbr a^{-X} \partial_\xi  \left( \eta'(u) - \eta'(S^{-X}) \right) \left( \mu(u) - \mu(S^{-X}) \right) \eta''(S^{-X})S'^{-X} d\xi.
\end{aligned}
\end{align*}
Again, we use a change of variable $\xi\mapsto \xi+X(t)$ to have
\begin{align*}
\begin{aligned}
&\frac{d}{dt}\int_{\bbr} a\eta(u^X|S) d\xi\\
&\quad =\dot X \Big( -\int_\bbr a'\eta(u^X|S) d\xi +\int_\bbr a\partial_\xi \eta'(S) (u^X-S) d\xi  \Big)\\
&\qquad + \int_\bbr a' q(u^X;S) d\xi  -\int_\bbr a \eta''(S) S' f(u^X|S) d\xi +\int_\bbr a S''  (\eta')(u^X|S) d\xi \\
&\qquad- \int_\bbr a' \mu(u^X) \left( \eta'(u^X) - \eta'(S) \right)\partial_\xi  \left( \eta'(u^X) - \eta'(S) \right) d\xi\\
&\qquad - \int_\bbr a' \left( \eta'(u^X) - \eta'(S) \right) \left( \mu(u^X) - \mu(S) \right) \eta''(S)S' d\xi\\
&\qquad - \int_\bbr a \partial_\xi  \left( \eta'(u^X) - \eta'(S) \right) \left( \mu(u^X) - \mu(S) \right) \eta''(S)S' d\xi\\
&\qquad -\sigma \int_\bbr a' \eta(u^X|S) d\xi- \int_\bbr a \mu(u^X) \left|\partial_\xi  \left( \eta'(u^X) - \eta'(S) \right)\right|^2 d\xi.
\end{aligned}
\end{align*}

It remains to rewrite $q(u;S)$ explicitly in terms of $\eta$ and $f$ in quadratic forms. \\
Since
\[
q(u)=\int^u \eta'(v) f'(v) dv,
\]  
using $\eta' f' = (\eta' f)' - \eta'' f$, we have
\begin{align*}
\begin{aligned}
q(u;S) &= q(u)- q(S) - \eta'(S) (f(u)-f(S))\\
&=\int^u_S \eta'(v) f'(v) dv - \eta'(S) (f(u)-f(S))\\
&=-\int^u_S \eta''(v) f(v) dv + \eta'(u)f(u)-\eta'(S)f(S)  - \eta'(S) (f(u)-f(S)).
\end{aligned}
\end{align*}
Then, putting $F(u):= -\int^u \eta''(v)f(v) dv$, we have
\begin{align*}
\begin{aligned}
q(u;S) &=F(u|S) -\eta''(S)f(S) (u-S)  + \Big(\eta'(u)-\eta'(S)\Big)f(S)\\
&=F(u|S) + \Big(\eta'(u)-\eta'(S)\Big)\Big(f(u)-f(S)\Big) + (\eta') (u|S)f(S) .
\end{aligned}
\end{align*}
Hence we have the desired representation \eqref{ineq-1}-\eqref{badgood}.
\end{proof}

\begin{remark}\label{rem:0}
In what follows, we will define the weight function $a$ such that $\sigma a' >0$. Therefore, $\mathcal{G}$ consists of two good terms.
\end{remark}

\subsection{Construction of the weight function}
We first recall that $\s>0$ as mentioned in Remark \ref{rem-s}.\\
We define the weight function $a$ by
\beq\label{weight-a}
a(\xi)=1+\lambda \frac{u_- -S(\xi)}{\eps}.
\eeq
Then, $a' =-\frac{\lambda}{\eps} S' >0$, and thus $\s a' >0$.

\subsection{Global and local estimates on the relative entropy}
We here present useful inequalities on the relative entropy $\eta(\cdot|\cdot)$ that are crucial for the proof of Theorem \ref{thm_general}.  In particular, the specific  coefficients of the estimates \eqref{e-est1}-\eqref{e-est2} will be crucially used in our local analysis on a suitably small truncation.  
\begin{lemma}\label{lem:local}
Let $\eta$ be the entropy satisfying the hypothesis ($\mathcal{H}$1). Then the following 1)-3) hold:\\
1) For any $u, v \in \bbr$,
\beq\label{e-est0}
\eta(u|v) \ge \frac{\alpha}{2} |u-v|^2.
\eeq
2) For any $u_1,u_2,v \in\bbr$ satisfying $v\le u_2 \le u_1$ or $u_1\le u_2 \le v$,
\beq\label{e-mono}
\eta(u_1|v) \ge \eta(u_2|v). 
\eeq
3) For a given constant $u_->0$, there exist positive constants $C$ and $\delta_*$ such that for any $0<\delta<\delta_*$, the following estimates hold:\\
For any $u, v\in \bbr$ with $|u-v|<\delta$ and $|v-u_-|<\delta$,
\begin{align}
\begin{aligned}\label{e-est1}
\eta(u|v)\le \bigg(\frac{\eta''(v)}{2}  + C\delta \bigg) |u-v|^2,
\end{aligned}
\end{align}
\beq\label{e-est2}
\eta(u|v)\ge \frac{\eta''(v)}{2} |u-v|^2 + \frac{\eta'''(v)}{6} (u-v)^3,
\eeq
For any $u, v\in \bbr$ satisfying $|v-u_-|< \delta$ and either $\eta(u|v)\le \delta$ or $|\eta'(u)-\eta'(v)|\le \delta$,
\beq\label{e-est3}
|\eta'(u)-\eta'(v)|^2 \le C\eta(u|v).
\eeq
\end{lemma}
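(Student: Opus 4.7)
All three parts follow from Taylor expansion of $\eta$ about $v$, combined with the pointwise lower bounds $\eta''\ge\alpha$ and (for the sharp lower bound \eqref{e-est2}) $\eta''''\ge\alpha$ from hypothesis $(\mathcal{H}1)$, together with smoothness of $\eta'''$ on a bounded neighborhood of $u_-$. Part 1) is immediate from the integral remainder form
\[
\eta(u|v)=(u-v)^2\int_0^1(1-t)\,\eta''\bigl(v+t(u-v)\bigr)\,dt,
\]
since the integrand is $\ge\alpha(1-t)$ and $\int_0^1(1-t)\,dt=\tfrac12$. For Part 2), I differentiate in the first argument: $\partial_{u_2}\eta(u_2|v)=\eta'(u_2)-\eta'(v)$, whose sign agrees with that of $u_2-v$ by strict convexity. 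Hence $u_2\mapsto\eta(u_2|v)$ is nondecreasing on $[v,\infty)$ and nonincreasing on $(-\infty,v]$, which yields both cases of the monotonicity statement.

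\textbf{Part 3).} I fix $\delta_*$ small enough that the closed ball $B(u_-,3\delta_*)$ lies in a region on which $\eta'''$ is bounded by some constant $M$ (possible by smoothness of $\eta$). For \eqref{e-est1}, the second-order Taylor expansion
\[
\eta(u|v)=\frac{\eta''(v)}{2}(u-v)^2+\frac{(u-v)^3}{2}\int_0^1(1-t)^2\,\eta'''\bigl(v+t(u-v)\bigr)\,dt
\]
bounds the cubic correction in absolute value by $\tfrac{M}{6}|u-v|^3\le\tfrac{M\delta}{6}(u-v)^2$, which gives \eqref{e-est1} after absorbing constants. For \eqref{e-est2}, I use the next-order integral remainder
\[
\eta(u|v)=\frac{\eta''(v)}{2}(u-v)^2+\frac{\eta'''(v)}{6}(u-v)^3+\frac{1}{6}\int_v^u\eta''''(w)(u-w)^3\,dw,
\]
and verify the last integral is non-negative by sign analysis: when $u\ge v$, both $\eta''''\ge\alpha>0$ and $(u-w)^3\ge 0$ on $[v,u]$; when $u<v$, the reversed orientation paired against $(u-w)^3\le 0$ on $[u,v]$ again yields a non-negative value. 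Dropping this non-negative remainder gives \eqref{e-est2}.

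For \eqref{e-est3}, I first extract that $|u-v|$ is small from either hypothesis: under $\eta(u|v)\le\delta$, Part 1) yields $|u-v|\le\sqrt{2\delta/\alpha}$; under $|\eta'(u)-\eta'(v)|\le\delta$, the inequality $|\eta'(u)-\eta'(v)|\ge\alpha|u-v|$ (again from $\eta''\ge\alpha$) yields $|u-v|\le\delta/\alpha$. Shrinking $\delta_*$ further, $u$ lies in a fixed bounded neighborhood of $u_-$ on which $\eta''$ is bounded above by some constant $K$; hence $|\eta'(u)-\eta'(v)|\le K|u-v|$, and combining with \eqref{e-est0} yields $|\eta'(u)-\eta'(v)|^2\le(2K^2/\alpha)\,\eta(u|v)$.

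\textbf{Main obstacle.} The most delicate step is the lower bound \eqref{e-est2}. The cubic term $\eta'''(v)(u-v)^3/6$ has indeterminate sign and cannot simply be discarded, so the compensating contribution must come from the fourth-order remainder. Its non-negativity hinges on the \emph{global} lower bound $\eta''''\ge\alpha$ in $(\mathcal{H}1)$ (not merely positivity on a compact set) and on carefully tracking the orientation of the integral in the case $u<v$. This is precisely why the hypothesis imposes a fourth-derivative lower bound rather than merely strict convexity of $\eta$; the specific coefficients $\tfrac{\eta''(v)}{2}$ and $\tfrac{\eta'''(v)}{6}$ appearing in \eqref{e-est2} are exactly the ones needed for the local analysis on a suitably small truncation later in the paper.
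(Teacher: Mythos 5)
Your proof is correct and follows essentially the same route as the paper: Part 1) via the integral (or Lagrange) remainder and $\eta''\ge\alpha$, Part 2) via convexity/monotonicity of $u\mapsto\eta(u|v)$, and Part 3) via a fourth-order Taylor expansion about $v$, using $\eta''''\ge\alpha$ to discard the remainder for \eqref{e-est2} and local boundedness of $\eta''',\eta''''$ for \eqref{e-est1}, with \eqref{e-est3} by extracting smallness of $|u-v|$ and applying the mean-value theorem. The only cosmetic difference is your use of integral-remainder Taylor formulas where the paper uses the Lagrange form with an intermediate point $v_*$.
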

\begin{proof}
$\bullet$ {\it proof of \eqref{e-est0}} :
Since 
\beq\label{releq}
\eta(u|v)=(u-v)^2\int_0^1\int_0^1 \eta''(v + st(u-v)) t\, dsdt,
\eeq
using $\eta''\ge\alpha$ by ($\mathcal{H}$1), we find
\[
\eta(u|v)\ge (u-v)^2 \alpha \int_0^1\int_0^1 t\, dsdt,
\]
which gives \eqref{e-est0}.\\

$\bullet$ {\it proof of \eqref{e-mono}} : 
Since $u\mapsto \eta(u|v)$ is convex in $u>0$ and zero at $u=v$, we see that $u\mapsto \eta(u|v)$ is increasing in $|u-v|$, which implies \eqref{e-mono}.\\

$\bullet$ {\it proof of \eqref{e-est1}-\eqref{e-est2}} :
Since 
\[
\eta(u|v)=\eta(u)-\eta(v)-\eta'(v)(u-v),
\]
applying Taylor theorem to $\eta$ about $v$, we have
\beq\label{et-instant}
\eta(u|v)=\frac{\eta''(v)}{2} (u-v)^2 +  \frac{\eta'''(v)}{6} (u-v)^3 +  \frac{\eta''''(v_*)}{24} (u-v)^4,
\eeq
where $v_*$  lies between $u$ and $v$.\\
First, the hypothesis $\eta''''\ge \alpha$ of ($\mathcal{H}$1) implies \eqref{e-est2}.\\
Secondly, since $|u-v|<\delta$ and $|v-u_-|<\delta$, there exists a positive constant $C$ (depending only on $u_-$) such that $|\eta'''(v)|+|\eta''''(v)|\le C$. Therefore \eqref{e-est1} follows from \eqref{e-est1}.\\

$\bullet$ {\it proof of \eqref{e-est3}} :
If $\eta(u|v)\le\delta$, then \eqref{e-est0} yields $|u-v|^2 \le  \frac{2}{\alpha} \delta$. If $|\eta'(u)-\eta'(v)|\le \delta$, then the assumption $\eta''\ge\alpha$ yields
$|u-v|\le \frac{1}{\alpha} \delta$.\\
Therefore, using the mean-value theorem together with $|v-u_-|\le \delta < \delta_*<1$, we find
\[
|\eta'(u)-\eta'(v)|^2 \le C|u-v|^2,
\]
which together with \eqref{e-est0} implies \eqref{e-est3}.
\end{proof}

\begin{remark}
Notice that we only need the hypothesis ($\mathcal{H}$1) on entropy for Lemma \ref{lem:local}. In fact, the remaining hypothesis ($\mathcal{H}$2) is crucially used in the proof of Proposition \ref{prop_out}.
\end{remark}

%%%%%%%%%%%%%%%%%%%%%%%%%%%%%%%%%%%%%%%%%%%%%%%%

\vspace{1cm}

\section{Proofs of Theorem \ref{thm_general} and Corollary \ref{coro}}\label{section_theo} 
\setcounter{equation}{0}

\subsection{Definition of the shift}

For any fixed $\eps>0$, we consider a continuous function $\Phi_\eps$ defined by
\beq\label{Phi-d}
\Phi_\eps (y)=
\left\{ \begin{array}{ll}
      \frac{1}{\eps^2},\quad \mbox{if}~ y\le -\eps^2, \\
      -\frac{1}{\eps^4}y,\quad \mbox{if} ~ |y|\le \eps^2, \\
       -\frac{1}{\eps^2},\quad \mbox{if}  ~y\ge \eps^2. \end{array} \right.
\eeq
We define a shift function $X(t)$ as a solution of the nonlinear ODE:
\beq\label{X-def}
\left\{ \begin{array}{ll}
       \dot X(t) = \Phi_\eps (Y(u^X)) \Big(2|\mathcal{B}(u^X)|+1 \Big),\\
       X(0)=0, \end{array} \right.
\eeq
where $Y$ and $\mathcal{B}$ are as in \eqref{badgood}.

\subsection{Existence of the shift}\label{global-X}
First of all, we note that our initial condition $\int_\bbr \eta(u_0|S) dx <\infty$ together with the property \eqref{e-est0} implies $u_0-S\in L^2(\bbr)$. It is known that if the initial data $u_0\in L^\infty(\bbr)$ and $u_0-S\in L^2(\bbr)$ then for any $T>0$, \eqref{main} admits a bounded weak solution $u$ such that $\|u\|_{L^\infty(\bbr)} \le \|u_0\|_{L^\infty(\bbr)}$ and
\beq\label{2est}
\int_\bbr |u-S|^2 d\xi +\int_0^t\int _\bbr |(u-S)_\xi |^2 d\xi \le e^{Ct}   \int_\bbr |u_0-S|^2 d\xi,\quad t\le T.
\eeq
Indeed, the first inequality follows from the maximum principle of the scalar conservation law. The second estimate \eqref{2est} is obtained by the energy method as follows: Using \eqref{eq-1} and \eqref{eq-2} together with $\|u\|_{L^\infty(\bbr)} \le \|u_0\|_{L^\infty(\bbr)}$, we have
\begin{align*}
\begin{aligned}
&\frac{d}{dt} \frac{1}{2}\int_{\bbr} |u-S|^2 d\xi + \int _\bbr |(u-S)_\xi|^2 d\xi  =   \int _\bbr \left(A(u)-A(S) \right) (u-S)_\xi d\xi \\
&\qquad \le \frac{1}{2}  \int _\bbr |(u-S)_\xi|^2 d\xi + \frac{1}{2}  \int _\bbr |A(u)-A(S) |^2 d\xi\\
&\qquad \le \frac{1}{2}  \int _\bbr |(u-S)_\xi|^2 d\xi + C \int _\bbr |u-S |^2 d\xi,
\end{aligned}
\end{align*}
which together with Gronwall inequality implies \eqref{2est}.\\

Then, using $S'\in L^2(\bbr)$ together with $\|u\|_{L^\infty(\bbr)} \le \|u_0\|_{L^\infty(\bbr)}$, we find 
\[
\left(\eta'(u)-\eta'(S)\right)_\xi =\eta''(u) (u-S)_\xi +\left( \eta''(u) -\eta''(S) \right) S' \in L^2((0,T)\times\bbr).
\]
Thus, $u\in \mathcal{H}$ (recall \eqref{solsp}) for a.e. $t\le T$, all functionals $Y, \mathcal{B}, \mathcal{G}$ in \eqref{badgood} are well-defined. \\
We now guarantee the existence of the shift $X$ as follows.
\begin{lemma}
Let $u$ be any function of $(t,x)$ such that 
\beq\label{odes}
\|u\|_{L^\infty(\bbr)} \le \|u_0\|_{L^\infty(\bbr)},\quad \left(\eta'(u)-\eta'(S)\right)_\xi \in L^2((0,T)\times\bbr).
\eeq
Then the ODE \eqref{X-def} has a unique absolutely continuous solution $X$ on $[0,T]$.
\end{lemma}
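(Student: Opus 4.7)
The plan is to verify that the right-hand side of the ODE \eqref{X-def} satisfies the Carathéodory conditions with an $L^1$-in-$t$ bound that is uniform for $X$ in any bounded interval, and then to invoke a standard Carathéodory--Peano existence theorem. The truncation built into $\Phi_\eps$ gives the universal bound $|\Phi_\eps|\le \eps^{-2}$, so \eqref{X-def} immediately yields
\[
|\dot X(t)|\le \eps^{-2}\bigl(2|\mathcal{B}(u^X)(t)|+1\bigr).
\]
Thus it suffices to show that $t\mapsto \mathcal{B}(u^X)(t)$ lies in $L^1(0,T)$ with a bound that is locally uniform in $X$, and that the right-hand side of \eqref{X-def} is continuous in $X$ for a.e.\ $t$.

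For the integrability, I would estimate the eight terms of $\mathcal{B}$ in \eqref{badgood} using only the global bound $\|u\|_{L^\infty}\le\|u_0\|_{L^\infty}$, the assumption $\partial_\xi(\eta'(u)-\eta'(S))\in L^2((0,T)\times\bbr)$, the consequence $u-S\in L^\infty_t L^2_\xi$ of \eqref{2est}, and the boundedness and integrability of $a,a',S',S'',\eta''(S)$. The six terms that do not contain $\partial_\xi(\eta'(u)-\eta'(S))$ admit an $L^\infty_t$-bound through quadratic estimates in $u-S$, while the two parabolic terms are controlled in $L^2_t$ by Cauchy--Schwarz; together these give $\mathcal{B}(u^X)\in L^2(0,T)\subset L^1(0,T)$. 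A change of variable $\xi\mapsto\xi-X$ shifts the $X$-dependence onto the weight and the shock, producing additional contributions bounded by $\|S(\cdot-X)-S\|_{L^2}$ and $\|a(\cdot-X)-a\|_{L^\infty}$, which are locally bounded in $X$; this yields the required $X$-uniformity on compact intervals.

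For the continuity, the map $X\mapsto u(t,\cdot+X)-S$ is continuous in $L^2(\bbr)$ for a.e.\ $t$ by translation continuity applied separately to $u(t,\cdot)-S$ (a fixed $L^2$-function by \eqref{2est}) and to the profile $S$; the same argument applies to $\partial_\xi(\eta'(u^X)-\eta'(S))$. Combined with the Lipschitz continuity of $\Phi_\eps$ (with constant $\eps^{-4}$) and the smoothness of $\eta$, $f$, and $\mu$ on the compact $L^\infty$-range of $u$, this gives continuity of $X\mapsto Y(u^X)(t)$ and $X\mapsto \mathcal{B}(u^X)(t)$ for a.e.\ $t$. Measurability in $t$ for fixed $X$ is routine. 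The Carathéodory existence theorem then delivers an absolutely continuous $X:[0,T]\to\bbr$ satisfying \eqref{X-def} with $X(0)=0$. The main obstacle is the bookkeeping of the eight terms in $\mathcal{B}$: one must verify that the nonlinear coefficients $\eta''(u)$, $\mu(u)=1/\eta''(u)$, and $f(u)$, which may grow polynomially in $u$, are all controlled by the uniform $L^\infty$-bound on $u$, so that no unbounded factor spoils the Carathéodory estimate.
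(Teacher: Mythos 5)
Your overall strategy is the same as the paper's: check the Carath\'eodory conditions, using the truncation bound $|\Phi_\eps|\le \eps^{-2}$, the $L^\infty$ bound on $u$, the $L^2$ bound on $\partial_\xi(\eta'(u)-\eta'(S))$, and the change of variable $\xi\mapsto\xi-X$ to move the shift off $u$. Your tally of which terms in $\mathcal{B}$ are handled by pointwise bounds and which by Cauchy--Schwarz is also correct. However, there is a genuine gap in the uniformity claim. You aim only for bounds ``locally bounded in $X$'' and invoke quantities like $\|S(\cdot-X)-S\|_{L^2}$, which grows like $|X|\,\|S'\|_{L^2}$. With a bound $m_b(t)$ that grows in $b$, the Carath\'eodory theorem in the form used here only yields existence on an interval $[0,d]$ with $\int_0^d m_b\,ds\le b$, and without a separate a priori bound on $|X|$ or a continuation argument one does not get $d=T$ directly. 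Your last sentence claims existence on all of $[0,T]$, which this reasoning does not deliver.

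The fix --- and what the paper actually does --- is to produce an $x$-uniform majorant: $\sup_{x\in\bbr}|\mathcal{B}(u^x)(t)|\le h(t)$ with $h\in L^1(0,T)$. This is possible because the decomposition through $u-S$ and $S-S^{-X}$ is unnecessary: every undifferentiated factor $\eta'(u)-\eta'(S^{-X})$, $\eta''(u)-\eta''(S^{-X})$, $\mu(u)-\mu(S^{-X})$, $(\eta')(u|S^{-X})$, etc., is bounded in $L^\infty$ uniformly in $X$, simply because $u$ and $S$ are both uniformly bounded. For the two terms containing $\partial_\xi(\eta'(u)-\eta'(S^{-X}))$, one writes it as $\partial_\xi(\eta'(u)-\eta'(S))+\partial_\xi(\eta'(S)-\eta'(S^{-X}))$; the second piece is bounded pointwise by a constant (since $\eta''(S)S'$ is bounded), and the first is handled by Cauchy--Schwarz against $a'^{-X}\in L^2(\bbr)$ with $\int_\bbr a'^{-X}\,d\xi=\lambda$ independent of $X$. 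This yields a time-integrable bound uniform over $X\in\bbr$, so one may take $a=T$ and any $b\ge C_*$ in Carath\'eodory's theorem and obtain the solution on the whole interval $[0,T]$ at once.
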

\begin{proof}
We will use the following lemma (see \cite[Lemma A.1]{CKKV}) which is a simple adaptation of the well-known Cauchy-Lipschitz theorem.  \\

 \begin{lemma} \cite[Lemma A.1]{CKKV} \label{lem_ckkv}
 Let $p>1$ and $T>0$. Suppose that a function 
 $F:[0,T]\times\bbr\rightarrow\bbr$  satisfies 
 $$\sup_{x\in\bbr }|F(t,x)|\leq a(t) \quad\mbox{and}\quad
\sup_{x,y\in\bbr,x\neq y }\Big|\frac{F(t,x)-F(t,y)}{x-y}\Big|\leq b(t)
\quad \mbox{for } t\in[0,T] $$ for some functions $a \in L^1(0,T)$ and $\, b\in L^p(0,T)$. Then for any $x_0\in\bbr$, there exists a unique absolutely continuous function $X:[0,T]\rightarrow \bbr$ satisfying
\beq\label{ode_eq}\left\{ \begin{array}{ll}
        \dot X(t) = F(t,X(t))\quad\mbox{for \textit{a.e.} }t\in[0,T],\\
       X(0)=x_0\end{array} \right.\eeq
 \end{lemma}

To apply the above lemma, let $F(t,X)$ denote the right-hand side of the ODE \eqref{X-def}. \\
First of all, we move the shift $X$ in $u$ into the other smooth functions $S, a, a', S''$ in the integrands of the functionals $Y$ and $\mathcal{B}$, by using the change of variables $\xi\mapsto \xi-X(t)$ as done in the proof of Lemma \ref{lem-rel}.\\
Then, we first observe that  \eqref{Phi-d} and \eqref{X-def} imply
\[
|F(t,x)|\le \frac{1}{\eps^2} \Big(2|\mathcal{B}(u^x)|+1 \Big),\quad \forall t\in[0,T],\quad \forall x\in\bbr.
\]
It is easy to show that $\sup_{x\in \bbr} |\mathcal{B}(u^x)|\le h(t)$ for all $t\in[0,T]$, for some $h\in L^2([0,T])\subset L^1([0,T])$. Indeed, for example, we estimate the fifth term of $\mathcal{B}(u^x)$ in \eqref{badgood} as follows: using \eqref{odes}, and $a'\in L^\infty\cup L^1$,
\begin{align*}
\begin{aligned}
&\Big| \int_\bbr a'^{-x} \mu(u) \left( \eta'(u) - \eta'(S^{-x}) \right)\partial_\xi  \left( \eta'(u) - \eta'(S^{-x}) \right) d\xi \Big| \le C \int_\bbr a'^{-x} \left|\partial_\xi  \left( \eta'(u) - \eta'(S^{-x}) \right) \right| d\xi \\
&\qquad \le  C \int_\bbr a'^{-x} \Big( \left|\partial_\xi  \left( \eta'(u) - \eta'(S) \right) \right| +\left|\partial_\xi  \left( \eta'(S) - \eta'(S^{-x}) \right) \right| \Big) d\xi \\
&\qquad \le  C \int_\bbr a'^{-x} \Big( \left|\partial_\xi  \left( \eta'(u) - \eta'(S) \right) \right| +C \Big) d\xi  \\
&\qquad \le  C+ C\sqrt{\int_\bbr a'^{-x} d\xi}  \sqrt{\int_\bbr  \left|\partial_\xi  \left( \eta'(u) - \eta'(S) \right) \right|^2 d\xi},
\end{aligned}
\end{align*}
where the all constants $C$ are independent of $x$, and the right-hand side belongs to $ L^1([0,T])$. Likewise, we can control other terms of $\mathcal{B}(u^x)$ by the time-integrable function.\\
Similarly, using \eqref{odes}, we can show that
\[
\sup_{x\in \bbr} |\partial_xF(t,x)|\le m(t),
\]
for some $\|m\|_{L^2([0,T])}\le C_*$.  
Therefore, we completes the proof by Lemma \ref{lem_ckkv}.
\end{proof}

\subsection{Proof of Theorem~\ref{thm_general} from a main proposition} \label{subsec:thm}
First of all, our main proposition is the following.
\begin{proposition}\label{prop:main}
There exist $\delta_0\in (0,1)$ such that for any $\eps, \lambda$ with $\delta_0^{-1}\eps<\lambda<\delta_0<1/2$, the following is true.\\
For any $u\in \{u~|~|Y(u)|\le\eps^2 \}$,
\beq\label{prop:est}
\mathcal{R}(u):= -\frac{1}{\eps^4}Y^2(u) +\mathcal{B}(u)+\delta_0 \frac{\eps}{\lambda} |\mathcal{B}(u)| - \mathcal{G}(u) \le 0.
\eeq
\end{proposition}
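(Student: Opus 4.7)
The plan is to follow the framework of Kang--Vasseur \cite{Kang-V-NS17}: decompose $\mathcal{B}$ and $\mathcal{G}$ into contributions from a small neighborhood of the shock profile and from its complement, absorb every ``outer'' contribution into a tiny fraction of $\mathcal{G}$ via the hypotheses $(\mathcal{H}2)$(i)--(iv), and reduce the inner analysis to a sharp Poincar\'e--type quadratic inequality in an $\eps$--independent rescaled variable. The smallness parameters $\lambda$ and $\eps/\lambda$ give the two independent budgets needed to absorb the outer and inner errors respectively, while the constraint $|Y(u)|\le \eps^2$ combined with the penalty $-\eps^{-4}Y^2(u)$ is precisely what restores coercivity along the translation direction of the shock, which is the quasi--kernel of the leading bad quadratic form.

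\textbf{Outer analysis.} I would first introduce a truncation of $u-S$ at a small scale $\delta\in(0,\delta_0)$ and split each of the eight integrals in $\mathcal{B}$ accordingly. On the complement region the hypotheses $(\mathcal{H}2)$ are tailor--made: (ii) and (iii) give linear control of $f(u)-f(S)$ and $\mu(u)-\mu(S)$ by $|\eta'(u)-\eta'(S)|$, (iv) turns the $F(u|S)$ contribution into a mixture of linear and quadratic pieces in $|\eta'(u)-\eta'(S)|$ on the two ranges $\{|u|\le 2\theta\}$ and $\{|u|>2\theta\}$, and (i) bounds these in terms of $\eta(u|S)$. Using $a'=-\tfrac{\lambda}{\eps}S'$ together with the exponential decay $|S'|\le C\eps^2 e^{-C\eps|\xi|}$ from Lemma 2.1, and Cauchy--Schwarz against the diffusion term inside $\mathcal{G}$, each outer integral is controlled by $C(\lambda+\tfrac{\eps}{\lambda})\mathcal{G}(u)$, which is a small fraction of $\mathcal{G}(u)$ once $\delta_0$ is small enough. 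This outer estimate is exactly the content of the proposition referenced as Proposition~\ref{prop_out} in Remark~2.3.

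\textbf{Inner analysis.} On the region $|u-S|\le\delta$, write $w:=u-S$ and Taylor expand every integrand in $\mathcal{B}$ around $S$, using \eqref{e-est1}--\eqref{e-est2} and the identities $\mu=1/\eta''$, $F'=-\eta''f$. After cancellations, the leading order of $\mathcal{B}$ is an explicit quadratic form in $w$ with coefficients built from $a'$, $S'$, and derivatives of $\eta$ and $f$ at $S$; the cubic and higher remainders are bounded by $C\delta(\mathcal{G}(u)+\eps^{-4}Y^2(u))$ via Young's inequality and \eqref{e-est3}. Performing the rescaling $\xi=y/\eps$ and $w(\xi)=\eps\, v(y)$, which is natural given $|S'|\sim\eps^2$ and $a'\sim\lambda\eps$, turns the leading problem into an $\eps$--independent inequality of the form
\begin{equation*}
\mathcal{B}_{\rm lead}[v]-\mathcal{G}_{\rm lead}[v]-\mathcal{Y}_{\rm lead}^2[v]\le 0\qquad\text{on }\{v:|\mathcal{Y}_{\rm lead}[v]|\le 1\},
\end{equation*}
where $\mathcal{B}_{\rm lead}$ and $\mathcal{G}_{\rm lead}$ are weighted $L^2$ quadratic forms on $\bbr$ with weights built from the normalized shock profile and its derivative, and $\mathcal{Y}_{\rm lead}$ is linear in $v$.

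\textbf{Main obstacle.} The crux is this sharp rescaled inequality. Without the $\mathcal{Y}_{\rm lead}^2$ constraint, $\mathcal{B}_{\rm lead}-\mathcal{G}_{\rm lead}$ has a one--parameter direction of near--degeneracy reflecting the translation invariance $S(\cdot+h)$ of \eqref{shock}, along which the inequality just fails. The role of the constraint $|Y|\le\eps^2$, after rescaling, is to kill this direction. Carrying out a Lagrange--multiplier computation (or equivalently exhibiting the explicit minimizer of $\mathcal{G}_{\rm lead}+\mathcal{Y}_{\rm lead}^2-\mathcal{B}_{\rm lead}$ under a suitable normalization) and verifying that the resulting sharp constant is comparable to $1$ with enough margin to absorb both the $\delta_0\tfrac{\eps}{\lambda}|\mathcal{B}(u)|$ correction and the Taylor remainders from the inner step is the technically demanding point. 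Once the sharp constant is in hand, I would fix $\delta_0$ small enough that the combined outer error $O(\lambda+\eps/\lambda)\mathcal{G}(u)$ and inner error $O(\delta)(\mathcal{G}+\eps^{-4}Y^2)$ are swallowed by the gap, yielding $\mathcal{R}(u)\le 0$.
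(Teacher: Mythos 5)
Your sketch captures the right shape of the argument (truncation, inner/outer decomposition, rescaling, reduction to a sharp Poincar\'e--type inequality with a constraint killing the translation direction), but the crux of the proof is explicitly left undone, and the route you propose for it diverges from the paper's in ways that matter.

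First, the truncation in the paper is applied to $|\eta'(u)-\eta'(S)|$ at level $\delta_1$, not to $|u-S|$ as you propose. This is not cosmetic: the hypotheses $(\mathcal{H}2)$(i)--(iv) are phrased entirely in terms of $\eta'(u)-\eta'(v)$, and the outer estimates (Lemma~\ref{lemma_oublie} and Proposition~\ref{prop_out}) only close because the cutoff commutes with the quantities appearing in those hypotheses. Second, and more importantly, your rescaling $\xi = y/\eps$, $w(\xi)=\eps\, v(y)$ is a simple dilation that leaves you with a weighted quadratic inequality on all of $\bbr$, and you flag the sharp constant for that problem as ``the technically demanding point,'' proposing a Lagrange-multiplier computation you do not carry out. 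The paper does something different: it changes variables via the \emph{shock profile itself}, setting $y(\xi) = (u_- - S(\xi))/\eps$, which maps $\bbr$ monotonically onto $[0,1]$, together with the normalization $W = (\lambda/\eps)(u-S)$. After this change of variables the diffusion term produces exactly the degenerate weight $y(1-y)|\partial_y W|^2$ (Lemma~\ref{lem:D}), and the whole inner inequality becomes, up to $O(\delta_1)$ errors, precisely the $\eps$-independent nonlinear Poincar\'e inequality $\mathcal{R}_\delta(W)\le 0$ of Lemma~\ref{lem_poincare}, which is imported as a black box from \cite{Kang-V-NS17}. Recognizing that this is the correct change of variables and that it reduces to a previously established functional inequality is the key idea you are missing; without it, the ``sharp rescaled inequality'' you reduce to remains unproved and the argument is incomplete.

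A further difference: the paper's final assembly splits into two cases according to whether $\mathcal{D}(u)\geq 4C^*\eps^2/\lambda$ or not. In the large-diffusion regime the desired inequality follows directly from the rough bound $|\mathcal{B}(u)|\le \delta_0^{1/4}\mathcal{D}(u)+C^*\eps^2/\lambda$ without any Poincar\'e input; only in the small-diffusion regime does one truncate and invoke Proposition~\ref{prop:near}. Your sketch does not make this dichotomy, and it is needed because the truncation error estimate \eqref{m1} (control of $|Y(u)-Y(\bar u)|^2$) is only available when $\mathcal{D}(u)$ is small.

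In summary: the architecture of your proposal is consistent with the paper, but the step you identify as hard is genuinely the heart of the matter, you do not prove it, and the specific reduction (profile-driven change of variables to $[0,1]$ and the $y(1-y)$-weighted nonlinear Poincar\'e inequality of \cite{Kang-V-NS17}) that the paper uses to close that step is absent from your plan.
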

 
We will first show how this proposition implies Theorem~\ref{thm_general} as follows:

Based on \eqref{ineq-1} and \eqref{X-def}, to get the contraction estimate \eqref{cont_main}, it is enough to prove that for almost every time $t>0$ 
\beq\label{contem0}
\Phi_\eps (Y(u^X)) \Big(2|\mathcal{B}(u^X)|+1 \Big) Y(u^X) +\mathcal{B}(u^X)-\mathcal{G}(u^X)\le0.
\eeq
For every $u$ we define 
\beq\label{rhs}
\mathcal{F}(U):=\Phi_\eps (Y(u)) \Big(2|\mathcal{B}(u)|+1 \Big)Y(u) +\mathcal{B}(u)-\mathcal{G}(u).
\eeq
From \eqref{Phi-d}, we have 
\beq\label{XY}
\Phi_\eps (Y) \Big(2|\mathcal{B}|+1 \Big)Y\le
\left\{ \begin{array}{ll}
     -2|\mathcal{B}|,\quad \mbox{if}~  |Y|\ge \eps^2,\\
     -\frac{1}{\eps^4}Y^2,\quad  \mbox{if}~ |Y|\le \eps^2. \end{array} \right.
\eeq
Hence,   for all $u$ satisfying  $|Y(u)|\ge \eps^2 $, we have 
$$
\mathcal{F}(u) \le -|\mathcal{B}(u)|-\mathcal{G}(u) \le 0.
$$
Using both (\ref{XY}) and Proposition \ref{prop:main}, we find that for all $u$ satisfying  $|Y(u)|\le \eps^2 $, 
$$
\mathcal{F}(u) \le -\delta_0\left(\frac{\eps}{\lambda}\right)|\mathcal{B}(u)| \le 0.
$$
 Since $\delta_0 <1$ and $\eps/\lambda<\delta_0$,   these two estimates show that for every $u$ we have 
 $$
\mathcal{F}(u) \le -\delta_0\left(\frac{\eps}{\lambda}\right)|\mathcal{B}(u)|.
$$
For every fixed $t>0$, using this estimate with $u=u^X(t,\cdot)$, together with  \eqref{ineq-1}, and \eqref{contem0} gives 
\beq\label{111}
\frac{d}{dt}\int_{\bbr} a\eta(u^X|S) d\xi \le \mathcal{F}(u^X) \le -\delta_0\left(\frac{\eps}{\lambda}\right)|\mathcal{B}(u^X)|.
\eeq
Thus, $\frac{d}{dt}\int_{\bbr} a\eta(u^X|S) d\xi \le0$, which completes \eqref{cont_main}.\\
Moreover, since it follows from \eqref{111} that
\[
\delta_0\left(\frac{\eps}{\lambda}\right)\int_0^{T}|\mathcal{B}(u^X)|dt \le  \int_{\bbr} \eta(u_0|S) d\xi <\infty \quad\mbox{by the initial condition},
\]
using \eqref{X-def} and $\|\Phi_\eps\|_{L^{\infty}(\bbr)}\le \frac{1}{\eps^2}$ by \eqref{Phi-d}, we have
\beq\label{112}
|\dot X|\le \frac{1}{\eps^2} + \frac{2}{\eps^2}|\mathcal{B}|, \quad  \|\mathcal{B}\|_{L^1(0,T)}\le \frac{1}{\delta_0}\frac{\lambda}{\eps}\int_{\bbr} \eta(u_0|S) d\xi.
\eeq
This provides the global-in-time estimate \eqref{est-shift}, and thus $X\in W^{1,1}_{loc}((0,T))$. This completes the proof of Theorem \ref{thm_general}.
\vskip0.1cm Section \ref{sec:prop} is dedicated to the proof of Proposition \ref{prop:main}.

\subsection{Proof of Corollary \ref{coro}}
First of all, following the proof of Proposition \ref{prop:main}, we observe that the hypothesis ($\mathcal{H}$2) is used only for \eqref{use1},  \eqref{use2},  \eqref{use3},  \eqref{use4},  \eqref{use5},  \eqref{use6} and  \eqref{use7} in the proof of Proposition \ref{prop_out}, from which the constant $\theta>0$ should be chosen such that $\theta\ge 3|u_-|$.\\
Therefore, for any $M>0$ such that $M\ge 6|u_-|$, since it follows from the maximum principle that $\|u\|_{L^\infty([0,T]\times\bbr)} \le \|u_0\|_{L^\infty(\bbr)}\le M$, it is enough to use the hypothesis ($\mathcal{H}$2) in the case of $|u|\le M$ and $|v|\le M/2$, which means  the new hypothesis ($\mathcal{H}2'$). Hence we have Corollary \ref{coro}.

\section{Proof of Proposition \ref{prop:main}}\label{sec:prop}
In this section, we present the proof of Proposition \ref{prop:main}, which was a main part of the proof of Theorem \ref{thm_general} in Section \ref{subsec:thm}. \\
\subsection{Ideas of Proof}
As in \cite{Kang-V-NS17}, the main idea is to use the generalized Poincar\'e inequality in Lemma \ref{lem_poincare}. For that, we first truncate the perturbation $|\eta'(u)-\eta'(S)|$ with a small constant $\delta_1>0$, due to the structure of the diffusion term $\mathcal{D}$ (second term of $\mathcal{G}$ in \eqref{badgood}), where the constant $\delta_1$ is determined by  the generalized Poincar\'e inequality. For values of $u$ such that $|\eta'(u)-\eta'(S)|\le \delta_1\ll 1$ (i.e., near $S$), we do the Taylor expansions at $S$ for the all terms of the functionals $Y, F$, $\mathcal{G}$, and the first four terms (denoted by $\mathcal{B}_1,...,\mathcal{B}_4$ as in \eqref{badgood-n}) of $\mathcal{B}$ in \eqref{badgood}. It will be proved in Proposition \ref{prop:near} that the leading order term of such expansions is non-positive thanks to Lemma \ref{lem_poincare}. 
The reason why we consider only the four terms $\mathcal{B}_1,...,\mathcal{B}_4$ in Proposition \ref{prop:near}  is that the remaining bad terms (denoted by $\mathcal{B}_5,...,\mathcal{B}_8$ as in \eqref{newbad})  is negligible by the good terms (see \eqref{n2}).\\
Therefore, it remains to show that for values of $u$ such that $|\eta'(u)-\eta'(S)|>\delta_1$, the all terms are negligible by the good terms. This will be proved in Proposition \ref{prop_out}. \\
Finally, in Section \ref{sub:con}, we combine the main Propositions \ref{prop:near} and \ref{prop_out} to complete the proof of Proposition \ref{prop:main}.

\vskip0.5cm

Now, we begin by recalling the functionals $Y$ and $F$ in \eqref{badgood}, and define the following functionals:
\begin{align}
\begin{aligned}\label{badgood-n}
&Y(u):= -\int_\bbr a'\eta(u|S) d\xi +\int_\bbr a\partial_\xi\eta'(S) (u-S) d\xi,\\
&F(u):= -\int^u \eta''(v)f(v) dv\quad (\mbox{i.e., $F$ is an antiderivative of } -\eta'' f), \\ 
&\mathcal{B}_1(u):=  \int_\bbr a' F(u|S) d\xi,\\
&\mathcal{B}_2(u):=  \int_\bbr a' \left( \eta'(u) - \eta'(S) \right)\left( f(u) - f(S) \right) d\xi,\\
&\mathcal{B}_3(u):=  \int_\bbr a' f(S) (\eta')(u|S) d\xi,\\
&\mathcal{B}_4(u):=- \int_\bbr a\eta''(S)S' f(u|S) d\xi,\\
&\mathcal{G}_0(u):=\s \int_\bbr a' \eta(u|S) d\xi , \\
&\mathcal{D}(u):=  \int_\bbr a \mu(u) \left|\partial_\xi  \left( \eta'(u) - \eta'(S) \right)\right|^2 d\xi.
\end{aligned}
\end{align}

\begin{proposition}\label{prop:near}
Let $\eta$ be the entropy satisfying the hypothesis ($\mathcal{H}$1). 
For any $K>0$, there exists $\delta_{1}\in(0,1)$ such that  for any $\delta_1^{-1}\eps<\lambda<\delta_1$ and for  any $\delta\in(0,\delta_{1})$,   
 the following is true.\\
For any function $u:\bbr\to \bbr$ such that $\mathcal{D}(u)+\mathcal{G}_0(u)$
 is finite, if
\beq\label{assYp}
|Y(u)|\leq K \frac{\eps^2}{\lambda},\qquad  \|\eta'(u)-\eta'(S)\|_{L^\infty(\bbr)}\leq \delta_1,
\eeq
then
\begin{align}
\begin{aligned}\label{redelta}
&\mathcal{R}_{\eps,\delta}(u):=-\frac{1}{\eps\delta}|Y(u)|^2 +\mathcal{B}_1(u)+\mathcal{B}_2(u)+\mathcal{B}_3(u)+\mathcal{B}_4(u) \\
&\qquad\qquad\qquad + \delta\frac{\eps}{\lambda}|\mathcal{B}_1(u)+\mathcal{B}_2(u)+\mathcal{B}_3(u)+\mathcal{B}_4(u)| -\left(1-\delta\frac{\eps}{\lambda}\right)\mathcal{G}_0(u)-(1-\delta)\mathcal{D}(u)\le 0.
\end{aligned}
\end{align}
\end{proposition}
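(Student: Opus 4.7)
The proof is a localization-plus-linearization argument exploiting both the $L^\infty$ smallness $\|\eta'(u)-\eta'(S)\|_{L^\infty}\le\delta_1$ and the parameter smallness $\eps,\lambda,\eps/\lambda\ll 1$. First I would use $(\mathcal{H}1)$ to deduce the pointwise bound $\|u-S\|_{L^\infty}\le \delta_1/\alpha$, so that every relative quantity appearing in $\mathcal{B}_1,\dots,\mathcal{B}_4,\mathcal{G}_0,Y$ admits a second-order Taylor expansion around $S$, with remainders controlled via Lemma \ref{lem:local}. After this step each bad functional is expressed as a weighted quadratic form in $w:=u-S$ with coefficients built from $\eta^{(k)}(S),f^{(k)}(S)$, plus an $O(\delta_1)(\mathcal{G}_0+\mathcal{D})$ remainder that is harmless once $\delta_1$ is small.

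\textbf{Rescaling.} Next I would parametrize $\bbr$ by $y:=(u_--S(\xi))/\eps\in(0,1)$, so that $a'(\xi)\,d\xi=\lambda\,dy$ and, using \eqref{shock-1} together with \eqref{tail}--\eqref{sec-S}, the quantity $-S'(\xi)/\eps$ behaves like $\eps\,y(1-y)$ times a smooth positive factor fixed by $f$ and $u_-$. In this variable, at leading order in $\eps$, $\mathcal{G}_0$ reduces to $\tfrac{\s\eta''(u_-)\lambda}{2}\int_0^1 w^2\,dy$, the dissipation $\mathcal{D}$ becomes a constant multiple of $\eps\int_0^1 y(1-y)(\partial_y w)^2\,dy$, and $\mathcal{B}_1+\mathcal{B}_2+\mathcal{B}_3$ becomes a constant multiple of $\lambda\int_0^1 w^2\,dy$. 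The term $\mathcal{B}_4$ is subleading of size $O(\eps)$ with a favorable sign since $-S'>0$ and $f''>0$. Finally $Y$ becomes, at leading order, $-\eta''(u_-)\eps\int_0^1 w\,dy$, i.e.\ essentially the first moment of $w$, so the penalty $|Y|^2/(\eps\delta)$ controls the mean of $w$ with prefactor $\eps/\delta$.

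\textbf{Closing via weighted Poincar\'e.} With these reductions, \eqref{redelta} reduces to two model inequalities on $(0,1)$. The mean part $\lambda(\int_0^1 w\,dy)^2$ is dominated by $|Y|^2/(\eps\delta)$ provided $\delta\lesssim \eps/\lambda$, a choice compatible with $\eps/\lambda<\delta_1$. The oscillation part requires a sharp weighted Poincar\'e inequality
\[
\int_0^1 |w-\bar w|^2\,dy \le C_P\int_0^1 y(1-y)(\partial_y w)^2\,dy,
\]
whose constant $C_P$ must be matched against the ratio between the leading coefficients of $\mathcal{B}_1+\mathcal{B}_2+\mathcal{B}_3$ and of $\mathcal{G}_0$. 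Using the Rankine-Hugoniot identity $\s=f'(u_-)+O(\eps)$ together with the positive $O(\eps)$ contribution from $\mathcal{B}_4$, the Poincar\'e step closes the gap between $\mathcal{B}$ and $\mathcal{G}_0$. The residual terms $\delta(\eps/\lambda)|\mathcal{B}_j|$ and the $\delta$-slack on $\mathcal{G}_0,\mathcal{D}$ are then absorbed by ordering the smallness parameters as $\delta_1\gg\delta\gg\eps/\lambda$.

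\textbf{Main obstacle.} The chief new difficulty, compared with \cite{Kang-V-1,Kang-V-NS17}, is that both $f$ and $\eta$ are arbitrary under $(\mathcal{H}1)$--$(\mathcal{H}2)$, so all Taylor coefficients at $u_-$ are generic. One must verify that the cancellation between $\mathcal{B}_1+\mathcal{B}_2+\mathcal{B}_3$ and $\mathcal{G}_0$ is \emph{structural}, coming from the decomposition $q(u;S)=F(u|S)+(\eta'(u)-\eta'(S))(f(u)-f(S))+(\eta')(u|S)f(S)$ derived in Lemma \ref{lem-rel}, rather than being an artifact of a special flux. Verifying that the matching Poincar\'e constant $C_P$ is uniform in $(\eta,f)$, and then tracking the Rankine-Hugoniot deficit $f'(u_-)-\s$ against $\mathcal{B}_4$, is the analytical heart of the proposition; the remaining estimates are perturbative bookkeeping based on $(\mathcal{H}2)$.
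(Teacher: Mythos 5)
Your outline gets the localization, the change of variables $y=(u_--S)/\eps$, and the expansion of $Y,\mathcal B_1,\dots,\mathcal B_4,\mathcal G_0$ around $u_-$ essentially right, and you correctly identify that the cancellation between $\mathcal B_1+\mathcal B_2+\mathcal B_3$ and $\mathcal G_0$ is structural, coming from the decomposition of $q(u;S)$ in Lemma \ref{lem-rel}. However, there is a genuine gap at the ``Closing via weighted Poincar\'e'' step, and a misreading of the role of $\mathcal B_4$.

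First, a second-order Taylor expansion is not enough: after normalizing by $\frac{2}{\eta''(u_-)f''(u_-)}\frac{\lambda^2}{\eps^3}$ the cubic terms $\frac{2}{3}\int_0^1 W^3\,dy$ (with $W=\frac{\lambda}{\eps}(u-S)$) survive at the \emph{same} order as $\int_0^1 W^2\,dy$ and the dissipation, so they are not an $O(\delta_1)$ remainder. The paper tracks the coefficients $\eta'''(u_-),f''(u_-)$ precisely in \eqref{B1234}--\eqref{G0} for exactly this reason. Second, the linear weighted Poincar\'e inequality $\int_0^1|w-\bar w|^2\,dy\le C_P\int_0^1 y(1-y)|\partial_y w|^2\,dy$ does not close the argument: its sharp constant (the Legendre spectral gap) gives $C_P=2>1$, while the normalized dissipation has coefficient $1$; and it says nothing about the sign of $\int W^3$. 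What the paper actually invokes is the \emph{nonlinear} Poincar\'e-type inequality of Lemma \ref{lem_poincare} (Proposition 3.3 of \cite{Kang-V-NS17}), whose negative quartic term $-\frac{1}{\delta}\bigl(\int_0^1 W^2\,dy+2\int_0^1 W\,dy\bigr)^2$ --- supplied by $-\frac{1}{\eps\delta}|Y|^2$ after \eqref{YW} and \eqref{Y^2} --- is essential to dominate both the residual $\int W^2$ and the cubic $\int W^3$, and which requires the a priori bound $\int_0^1 W^2\,dy\le M$ established in \eqref{controlW} from $|Y(u)|\le K\eps^2/\lambda$. Without this specific functional inequality (or an equivalent substitute), the argument does not close.

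Finally, two smaller points. The claim that $\mathcal B_4$ has a ``favorable sign'' is backwards: since $-S'>0$, $a\,\eta''(S)>0$ and $f(u|S)\ge 0$ by convexity, one has $\mathcal B_4\ge 0$, and it is precisely this $O(\eps)\int w^2$ term (together with the $O(\lambda)\int w^3$ residual) that survives the cancellation with $\mathcal G_0$ and must be dominated by $\mathcal D$; it is a bad term, not a helpful one. And you cannot ``choose $\delta\lesssim\eps/\lambda$'': the conclusion must hold for every $\delta\in(0,\delta_1)$. This last issue is only cosmetic, since $\delta\mapsto\mathcal R_{\eps,\delta}(u)$ is increasing, so it suffices to treat $\delta=\delta_1$, as the paper does.
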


To prove this proposition, we will use the nonlinear Poincar\'e type inequality in \cite{Kang-V-NS17}:
\begin{lemma}\cite[Proposition 3.3]{Kang-V-NS17}\label{lem_poincare}
For a given $M>0$, there exists $\delta^{*}=\delta^{*}(M)>0$ such that for any $\delta\in(0,\delta^{*})$ the following is true:\\ For any $W\in L^2(0,1)$ with  
$\sqrt{y(1-y)}\partial_yW\in L^2(0,1)$, if $\int_0^1 |W(y)|^2\,dy\leq M$, then
\beq\label{inequal_poincare}
 \mathcal{R}_\delta(W)\leq0.
\eeq where  
\beq\label{def_poincare}\begin{split}
\mathcal{R}_\delta(W):&=-\frac{1}{\delta}\left(\int_0^1W^2\,dy+2\int_0^1 W\,dy\right)^2+(1+\delta)\int_0^1 W^2\,dy\\
&\quad+\frac{2}{3}\int_0^1 W^3\,dy 
+\delta \int_0^1 |W|^3\,dy 
-(1-\delta)\int_0^1 y(1-y)|\partial_y W|^2\,dy.
\end{split}\eeq
\end{lemma}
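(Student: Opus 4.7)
The plan is a contradiction and compactness argument that reduces \eqref{inequal_poincare} to a linear weighted Poincar\'e inequality on $(0,1)$. Suppose the conclusion fails; then there exist sequences $\delta_n\downarrow 0$ and $W_n$ with $\alpha_n:=\int_0^1 W_n^2\,dy\le M$ and $\mathcal R_{\delta_n}(W_n)>0$. Setting $\beta_n:=\int_0^1 W_n\,dy$ and $D_n:=\int_0^1 y(1-y)|\partial_y W_n|^2\,dy$, the strategy is: (i) use $\mathcal R_{\delta_n}(W_n)>0$ to obtain a uniform bound on $D_n$ and to force $\alpha_n+2\beta_n\to 0$; (ii) extract a weak limit $W$; (iii) show the only $W$ satisfying the limiting constrained inequality is $W\equiv 0$; (iv) close with a quadratic expansion about $W=0$ that exploits the $\delta_n$-penalty on $(\alpha+2\beta)^2$.

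For step (i), write $\mathcal R_{\delta_n}(W_n)>0$ as
\begin{equation*}
(1-\delta_n)D_n+\tfrac1{\delta_n}(\alpha_n+2\beta_n)^2 < (1+\delta_n)\alpha_n +\tfrac23\!\int_0^1 W_n^3\,dy+\delta_n\!\int_0^1|W_n|^3\,dy,
\end{equation*}
bound the cubic terms using $\int|W|^3\le\|W\|_{L^\infty(I)}\alpha$ on compact $I\subset(0,1)$, the weighted Sobolev embedding $\|W\|_{L^\infty(I)}\lesssim \alpha^{1/2}+D^{1/2}$, and a cut-off argument at the degenerate endpoints, and absorb the resulting $D_n^{1/2}$-contribution into $(1-\delta_n)D_n$ by Young's inequality to deduce $D_n\le C(M)$ and $(\alpha_n+2\beta_n)^2=O(\delta_n)$. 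For (ii), extracting a subsequence yields $W_n\rightharpoonup W$ in $L^2(0,1)$ and weakly in the weighted $H^1$, strongly in $L^p_{\mathrm{loc}}(0,1)$ for every finite $p$; by weak lower semicontinuity of the Dirichlet energy and strong convergence of the cubic integrands, the limit satisfies
\begin{equation*}
\int_0^1 W^2\,dy+\tfrac23\!\int_0^1 W^3\,dy\;\ge\;\int_0^1 y(1-y)|\partial_y W|^2\,dy,\qquad \int_0^1 W^2\,dy+2\!\int_0^1 W\,dy=0.
\end{equation*}

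For (iii), decompose $W=c+V$ with $c:=\int_0^1 W\,dy$ and $\int_0^1 V\,dy=0$; the constraint reads $c^2+2c+\int V^2=0$, forcing $c=-1\pm\sqrt{1-\int V^2}$ and $\int V^2\le 1$. Using the Legendre spectral gap $\int V^2\le\tfrac12\int y(1-y)|\partial_yV|^2\,dy$ (with equality iff $V$ is proportional to $y-\tfrac12$), the limiting inequality reduces to $c^2(1+\tfrac23c)+(2c-1)\int V^2+\tfrac23\int V^3\ge 0$. On the ``$+$''-branch $c=-\tfrac12\int V^2+O((\int V^2)^2)$, the term $(2c-1)\int V^2\le -\int V^2$ strictly dominates the other corrections (using $|\int V^3|\le\|V\|_\infty\int V^2$ and the spectral gap), forcing $V\equiv 0$ and $c=0$. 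On the ``$-$''-branch $c\in[-2,-1]$; a direct computation using the constraint shows that $c^2(1+\tfrac23c)+(2c-1)\int V^2$ is bounded above by a strictly negative quantity (at worst $-\tfrac43$), so the inequality demands a lower bound on $\int V^3$, which a sharp Gagliardo-Nirenberg-type interpolation $\int V^3\le C\alpha(\alpha+D)^{1/2}$ combined with $D\ge 2\int V^2$ rules out. Therefore $W\equiv 0$. For step (iv), a quadratic expansion of $\mathcal R_{\delta_n}$ about $W=0$ gives, for $\alpha_n\to 0$, a leading form $-\tfrac4{\delta_n}\beta_n^2+(1+\delta_n)\alpha_n-(1-\delta_n)D_n+o(\alpha_n)$; splitting into the cases $\beta_n^2/\alpha_n\to 1$ (penalty dominates) and $\beta_n^2/\alpha_n\to 0$ (spectral gap yields $D_n\ge 2\alpha_n+o(\alpha_n)$), this is bounded by $-\tfrac12\alpha_n+o(\alpha_n)$ for $\delta_n$ small, contradicting $\mathcal R_{\delta_n}(W_n)>0$.

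The main obstacle is step (iii) on the ``$-$''-branch, where $W$ can be near the constant $-2$: the quadratic and constant pieces already produce a fixed negative contribution, and negativity of the whole expression must come from the cubic $\tfrac23\int W^3$. The computation is genuinely nonlinear and tight --- the coefficient $\tfrac23$ and the Legendre spectral gap $2$ must combine exactly --- and the degeneracy of the weight $y(1-y)$ at the endpoints, which rules out a global $L^\infty$ Sobolev embedding, is a secondary technical obstacle requiring careful cut-offs when controlling the cubic integrals in steps (i)--(ii).
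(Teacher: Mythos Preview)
The paper does not prove this lemma at all; it is quoted from Proposition~3.3 of \cite{Kang-V-NS17} and used as a black box. So there is no proof in the present paper to compare your proposal against.

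On your sketch itself: the compactness strategy is a reasonable plan, and steps (i)--(ii) can be made to work once you note that the weight $y(1-y)$ corresponds, via $y=\sin^2(\theta/2)$, to axially symmetric $H^1(S^2)$, so that the two-dimensional Gagliardo--Nirenberg inequality yields $\int_0^1|W|^3\,dy\le C\alpha(\alpha+D)^{1/2}$. This gives the uniform bound on $D_n$, the compactness, and the strong $L^3$ convergence you need. Step (iv) also closes after splitting into the cases $\limsup D_n>0$ (where $-(1-\delta_n)D_n$ alone forces $\mathcal R_{\delta_n}<0$) and $D_n\to0$ (where the cubic term is $O(\alpha_n\sqrt{\alpha_n+D_n})=o(\alpha_n)$ and the spectral gap finishes it).

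The genuine gap is step (iii). On the ``$+$''-branch near $c=0$ you assert that $(2c-1)\int V^2$ ``strictly dominates'' the cubic correction $\tfrac23\int V^3$, but Gagliardo--Nirenberg only gives $|\int V^3|\le C\,s\,(s+D)^{1/2}$ with $s=\int V^2$, and the constant $C$ is not small; a priori this is merely $O(s)$, which does not beat $-s$. What actually works there is a bootstrap: the limiting inequality $D\le\phi(c)+\tfrac23\int V^3$ together with GN forces $D=O(s)$, whence $\int V^3=O(s^{3/2})=o(s)$, and then the contradiction follows --- but this loop is absent from your write-up. For $s$ bounded away from $0$ on the ``$+$''-branch, and on the entire ``$-$''-branch, you invoke an unspecified ``sharp Gagliardo--Nirenberg-type interpolation'' that must somehow rule out the required lower bound on $\int V^3$; this is exactly where the tight interaction of the coefficient $\tfrac23$ with the Legendre spectral gap $2$ lives, and the sketch does not carry it out. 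The proof in \cite{Kang-V-NS17} handles this by an explicit expansion with careful tracking of constants; your compactness route may be completable, but step (iii) as written is a gap, not a proof.
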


\subsection{Proof of Proposition \ref{prop:near}}
We first observe that since $\eta''\ge \alpha$ by the hypothesis ($\mathcal{H}$1), the mean-value theorem implies
\beq\label{instant0}
\|u-S\|_{L^\infty(\bbr)}\le \alpha^{-1}  \|\eta'(u)-\eta'(S)\|_{L^\infty(\bbr)}.
\eeq
We take $\delta_1\in (0,1)$ small enough such that
\[
\delta_1\le\min(\alpha,1)  \delta_*,
\]
where $\delta_*$ is the constant as in Lemma \ref{lem:local}.\\
Then it follows from \eqref{instant0} that
\beq\label{us1}
\|u-S\|_{L^\infty(\bbr)}\le\alpha^{-1}\delta_1 \le \delta_*.
\eeq
Moreover, since 
\beq\label{us2}
|S-u_-|\le \eps=\frac{\eps}{\lambda}\lambda<\delta_1^2 \le \delta_1\le \delta_*,
\eeq
we can use Lemma \ref{lem:local} in this proof.\\

In what follows, we will rewrite the functionals in \eqref{badgood-n} and $Y$  in terms of $y$ variable defined by
\beq\label{w1w2}
y(\xi)=\frac{u_- -S(\xi)}{\eps}.
\eeq
Notice that since $S'<0$, we can use a change of variable $\xi\in\bbr\mapsto y\in[0,1]$.\\
Then it follows from \eqref{weight-a} that $a=1+\lambda y$ and
\beq\label{ach}
a'(\xi)=- \frac{\lambda}{\eps} S'(\xi) =\lambda \frac{dy}{d\xi}.
\eeq
For simple presentation and normalization, we will use the notations
\beq\label{wW}
w(y):=(u-S)\circ y^{-1},\quad W:=\frac{\lambda}{\eps}w.
\eeq

%AAAAAAAAAAAAAAAAAAAA

$\bullet$ {\bf Change of variable for $Y$:} 
We first recall 
\[
Y= \underbrace{-\int_\bbr a'\eta(u|S) d\xi }_{=:Y_1} +\underbrace{\int_\bbr a\, \eta''(S) S' (u-S) d\xi }_{=:Y_2}.
\]
Using \eqref{e-est1} and \eqref{e-est2} in Lemma  \ref{lem:local}, we have
$$
\left| Y_1 + \int_\bbr a' \frac{\eta''(S)}{2}(u-S)^2 d\xi  \right|\leq C\delta_1 \int_\bbr a' (u-S)^2 d\xi.
$$
Using
\beq\label{small2}
|\eta''(S)-\eta''(u_-)|\le C\eps \le C\delta_1,
\eeq
and \eqref{ach} with notation \eqref{wW}, we get
\begin{align}\label{alpha1}
\begin{aligned}
\left|Y_1 + \lambda \frac{\eta''(u_-)}{2}\int_0^1 w^2 dy\right|\leq C\lambda\delta_1 \int_0^1 w^2 dy.
\end{aligned}
\end{align}
Likewise, using  \eqref{ach} and \eqref{small2} together with $|a-1|\le\lambda<\delta_1$, we have
\[
\left|Y_2 +\eps\eta''(u_-)  \int_0^1  w dy\right|\leq C\eps\delta_1 \int_0^1 |w| dy. 
\]
Therefore, we have
\[
\left|Y + \frac{\eta''(u_-)}{2} \Big( \lambda \int_0^1 w^2 dy +2\eps \int_0^1  w dy \Big)\right| \le C\delta_1 \Big( \lambda \int_0^1 w^2 dy +\eps \int_0^1  |w| dy \Big).
\]
Using the notation $W$ in \eqref{wW}, we have
\[
\left|Y + \frac{\eta''(u_-)}{2}  \frac{\eps^2}{\lambda} \Big(  \int_0^1 W^2 dy +2 \int_0^1  W dy \Big)\right| \le C\delta_1 \frac{\eps^2}{\lambda} \Big( \int_0^1 W^2 dy + \int_0^1  |W| dy \Big),
\]
which gives
\beq\label{YW}
\left| \frac{2}{\eta''(u_-)} \frac{\lambda}{\eps^2} Y +   \int_0^1 W^2 dy +2 \int_0^1  W dy \right| \le C\delta_1 \Big( \int_0^1 W^2 dy + \int_0^1  |W| dy \Big).
\eeq

\vskip0.3cm

$\bullet$ {\bf Change of variable for  $\mathcal{B}_1, \mathcal{B}_2, \mathcal{B}_3$ and $\mathcal{B}_4$:} 
First of all, we have
\begin{align*}
\begin{aligned}
&\mathcal{B}_1+\mathcal{B}_2+\mathcal{B}_3\\
&\quad=  \int_\bbr a' \Big(F(u|S)+  \left( \eta'(u) - \eta'(S) \right)\left( f(u) - f(S) \right) + f(S) (\eta')(u|S)  \Big) d\xi.
\end{aligned}
\end{align*}
Since
\[
F(u|S) =F(u)-F(S)-F'(S)(u-S),
\]
using Taylor theorem together with
\[
F''=-\eta''' f -\eta'' f', \quad F'''=-\eta'''' f -2 \eta''' f' - \eta'' f'',
\]
we find that for any $u$ satisfying \eqref{us1} and \eqref{us2}, 
\begin{align*}
\begin{aligned}
&\Big|F(u|S) +\frac{1}{2} \Big(\eta'''(S) f(S) +\eta''(S) f'(S) \Big) (u-S)^2 \\
&\quad +\frac{1}{6} \Big(\eta''''(S) f(S) +2\eta'''(S) f(S)+\eta''(S) f''(S) \Big) (u-S)^3 \Big| \le C\delta_1 |u-S|^3.
\end{aligned}
\end{align*}
Likewise, we find that for any $u$ satisfying \eqref{us1} and \eqref{us2}, 
%\begin{align*}
%\begin{aligned}
%&\Big| \eta'(u) -\eta'(S) -\eta''(S)(u-S) -\frac{\eta'''(S)}{2}(u-S)^2\Big| \le  C\delta_1 |u-S|^2, \\
%&\Big| f(u) -f(S) -f'(S)(u-S) -\frac{f''(S)}{2}(u-S)^2\Big| \le  C\delta_1 |u-S|^2,
%\end{aligned}
%\end{align*}
\begin{align*}
\begin{aligned}
&\Big| \left( \eta'(u) - \eta'(S) \right)\left( f(u) - f(S) \right)  -\eta''(S)f'(S)(u-S)^2 \\
&\quad -\frac{1}{2} \Big(\eta''(S) f''(S) +\eta'''(S) f'(S)  \Big) (u-S)^3\Big| \le  C\delta_1 |u-S|^3, 
\end{aligned}
\end{align*}
and
\begin{align*}
\begin{aligned}
\Big|  f(S) (\eta')(u|S)  -f(S)\frac{\eta''(S)}{2} (u-S)^2 - f(S) \frac{\eta'''(S) }{6}(u-S)^3\Big| \le  C\delta_1 |u-S|^3.
\end{aligned}
\end{align*}
Therefore, we have
\begin{align}
\begin{aligned}\label{mb123}
&\bigg|  \mathcal{B}_1+\mathcal{B}_2+\mathcal{B}_3 \\
&\qquad -\int_\bbr a'\Big[ \frac{1}{2}\eta''(S) f'(S)  (u-S)^2 +\Big(  \frac{1}{3}\eta''(S) f''(S) + \frac{1}{6}\eta'''(S) f'(S) \Big)(u-S)^3 \Big] d\xi \bigg|   \\
&\quad \le  C\delta_1 \int_\bbr a'  |u-S|^3  d\xi.
\end{aligned}
\end{align}
Using \eqref{ach} together with \eqref{small2} and
\beq\label{minor0}
|f'(S)-f'(u_-)|\le C\eps,\quad |f''(S)-f''(u_-)|\le C\eps,\quad |\eta'''(S)-\eta'''(u_-)|\le C\eps,
\eeq
we have
\begin{align}
\begin{aligned}\label{B123}
 \mathcal{B}_1+\mathcal{B}_2+\mathcal{B}_3  &\le   \lambda \frac{1}{2}\eta''(u_-) f'(u_-)\int_0^1 w^2 dy +C \lambda\eps\int_0^1 w^2 dy \\
&\quad+ \lambda\Big(\frac{1}{3}\eta''(u_-) f''(u_-) + \frac{1}{6}\eta'''(u_-) f'(u_-) \Big) \int_0^1 w^3 dy + C\lambda\delta_1\int_0^1 |w|^3 dy.
\end{aligned}
\end{align}

Likewise, since
\beq\label{mb4}
\Big|\mathcal{B}_4 + \int_\bbr a S'  \eta''(S)\frac{f''(S)}{2} (u-S)^2  d\xi \Big| \le  C\delta_1 \int_\bbr |S'|  |u-S|^2  d\xi,
\eeq
using $|a-1|\le\lambda<\delta_1$, we have
\[
\mathcal{B}_4 \le  \eps \frac{1}{2}\eta''(u_-)f''(u_-) \int_0^1 w^2 dy + C\eps\delta_1  \int_0^1 w^2 dy.
\]
Therefore, this and \eqref{B123} yields
\begin{align}
\begin{aligned}\label{B1234}
& \mathcal{B}_1+\mathcal{B}_2+\mathcal{B}_3 +\mathcal{B}_4 \\
 &\quad\le   \lambda \frac{1}{2}\eta''(u_-) f'(u_-)\int_0^1 w^2 dy + \eps \frac{1}{2}\eta''(u_-)f''(u_-) \int_0^1 w^2 dy + C\eps\delta_1  \int_0^1 w^2 dy \\
&\quad\quad+ \lambda\Big(\frac{1}{3}\eta''(u_-) f''(u_-) + \frac{1}{6}\eta'''(u_-) f'(u_-) \Big) \int_0^1 w^3 dy + C\lambda\delta_1\int_0^1 |w|^3 dy.
\end{aligned}
\end{align}

\vskip0.3cm

$\bullet$ {\bf Change of variable for $\mathcal{G}_0$:}
We first use \eqref{e-est2} in Lemma \ref{lem:local} to get
\[
\mathcal{G}_0=\s \int_\bbr a' \eta(u|S) d\xi \ge \s \int_\bbr a' \Big(\frac{\eta''(S)}{2} |u-S|^2 + \frac{\eta'''(S)}{6} (u-S)^3 \Big) d\xi.
\]
Then using \eqref{app-sf}, we have
\begin{align*}
\begin{aligned}
\mathcal{G}_0&\ge \left( \frac{f'(u_-)\eta''(u_-)}{2} -C\eps\right) \int_\bbr a'  |u-S|^2 d\xi \\
&\quad+ \frac{f'(u_-)\eta'''(u_-)}{6}   \int_\bbr a'  (u-S)^3 d\xi -C\eps \int_\bbr a'  |u-S|^3 d\xi.
\end{aligned}
\end{align*}
Thus,
\beq\label{G0}
-\mathcal{G}_0 \le -\lambda\left( \frac{f'(u_-)\eta''(u_-)}{2} -C\eps\right)\int_0^1 w^2 dy - \lambda\frac{f'(u_-)\eta'''(u_-)}{6}  \int_0^1 w^3 dy +C\lambda\eps \int_0^1 |w|^3 dy.
\eeq

\vskip0.3cm
$\bullet$ {\bf Estimates on $ \mathcal{B}_1+\mathcal{B}_2+\mathcal{B}_3 +\mathcal{B}_4 -\mathcal{G}_0$:} 
We combine \eqref{B1234} and \eqref{G0} to have
\begin{align*}
\begin{aligned}
 \mathcal{B}_1+\mathcal{B}_2+\mathcal{B}_3 +\mathcal{B}_4 -\mathcal{G}_0 &\le   \eps \frac{1}{2}\eta''(u_-)f''(u_-) \int_0^1 w^2 dy + C\eps\delta_1  \int_0^1 w^2 dy \\
&\quad+ \lambda\frac{1}{3}\eta''(u_-) f''(u_-)  \int_0^1 w^3 dy + C\lambda\delta_1\int_0^1 |w|^3 dy.
\end{aligned}
\end{align*}
Using the notation $W$ in \eqref{wW}, we have
\begin{align*}
\begin{aligned}
 \mathcal{B}_1+\mathcal{B}_2+\mathcal{B}_3 +\mathcal{B}_4 -\mathcal{G}_0 &\le   \frac{\eps^3}{\lambda^2}\frac{\eta''(u_-)f''(u_-)}{2} \Big( 
 \int_0^1 W^2 dy +\frac{2}{3} \int_0^1 W^3 dy \Big) \\
&\quad + C \frac{\eps^3}{\lambda^2} \delta_1  \Big( \int_0^1 W^2 dy +    \int_0^1 |W|^3\, dy\Big).
\end{aligned}
\end{align*}
Since $\eta''(u_-)f''(u_-)>0$, the above estimate can be rewritten into (in a way of normalizing the right-hand side) :
\begin{align}
\begin{aligned}\label{sum-bg}
& \frac{2}{\eta''(u_-)f''(u_-)}\frac{\lambda^2}{\eps^3} \Big( \mathcal{B}_1+\mathcal{B}_2+\mathcal{B}_3 +\mathcal{B}_4 -\mathcal{G}_0 \Big) \\
&\quad\le  \int_0^1 W^2 dy + \frac{2}{3}   \int_0^1 W^3\, dy + C  \delta_1  \Big( \int_0^1 W^2 dy + \int_0^1 |W|^3\, dy\Big).
\end{aligned}
\end{align}

On the other hand, using \eqref{mb123}, \eqref{mb4} and \eqref{e-est1}, we have a rough estimate:
\[
|\mathcal{B}_1+\mathcal{B}_2+\mathcal{B}_3+\mathcal{B}_4| +\mathcal{G}_0 \le C\lambda \int_0^1 w^2 dy = C\frac{\eps^2}{\lambda} \int_0^1 W^2 dy,
\]
which yields
\[
\delta_1\frac{\eps}{\lambda}  \Big(|\mathcal{B}_1+\mathcal{B}_2+\mathcal{B}_3+\mathcal{B}_4| +\mathcal{G}_0\Big) \le C\delta_1\frac{\eps^3}{\lambda^2} \int_0^1 W^2 dy.
\]
Therefore, we have
\beq\label{sum-a}
  \frac{2}{\eta''(u_-)f''(u_-)}\frac{\lambda^2}{\eps^3}\Big[ \delta_1\frac{\eps}{\lambda}  \Big(|\mathcal{B}_1+\mathcal{B}_2+\mathcal{B}_3+\mathcal{B}_4| +\mathcal{G}_0 \Big) \Big] \le C\delta_1 \int_0^1 W^2 dy.
\eeq

\vskip0.3cm

$\bullet$ {\bf Change of variable on $\mathcal{D}$:} 
We first use Young's inequality: $-2AB\le \delta_1 A^2 +(4/\delta_1)B^2$ to get the following inequality: For any $A, B \in \bbr$,
\[
-(A+B)^2 \le -(1-\delta_1) A^2 +\frac{4}{\delta_1} B^2.
\]
Since
\[
\mathcal{D} = \int_\bbr a \frac{1}{\eta''(u)} \left| \eta''(u) \partial_\xi (u-S) + (\eta''(u)-\eta''(S)) S' \right|^2 d\xi,
\]
we use the above inequality to have
\[
-\mathcal{D} \le \underbrace{-(1-\delta_1)  \int_\bbr   \frac{1}{\eta''(u)} \left| \eta''(u) \partial_\xi (u-S) \right|^2 d\xi }_{=:\mathcal{D}_M} + \underbrace{\frac{4}{\delta_1} \int_\bbr   \frac{a^2}{\eta''(u)} |S'|^2 \left|\eta''(u)-\eta''(S) \right|^2 d\xi }_{=:\mathcal{D}_m}.
\]
Notice that  \eqref{us1}, \eqref{us2} and $\eta''\ge\alpha$ imply
\[
\mathcal{D}_m \le \frac{C}{\delta_1} \int_\bbr  |S'|^2 \left|u-S\right|^2 d\xi.
\]
Then using $|S'|\le\eps^2$ (by \eqref{tail}) together with \eqref{ach} and $\eps<\delta_1$, we have
\[
\mathcal{D}_m \le C \eps^2  \int_0^1 w^2 dy = C\frac{\eps^4}{\lambda^2}  \int_0^1 W^2 dy,
\]
which gives
\beq\label{minorD}
 \frac{2}{\eta''(u_-)f''(u_-)}\frac{\lambda^2}{\eps^3} \mathcal{D}_m \le C\delta_1  \int_0^1 W^2 dy.
\eeq
We now handle the main part:
\[
\mathcal{D}_M = -(1-\delta_1)  \int_\bbr \eta''(u)   \left| \partial_\xi (u-S) \right|^2 d\xi,
\]
which is rewritten into
\[
\mathcal{D}_M = -(1-\delta_1)  \int_0^1 \eta''(u) |\partial_y w|^2 \Big(\frac{dy}{d\xi}\Big) dy.
\]
Using the following Lemma \ref{lem:D}, and the fact that for any $u$ satisfying \eqref{us1} and \eqref{us2}:
\[
|\eta''(u)-\eta''(u_-)|\le |\eta''(u)-\eta''(S)|+|\eta''(S)-\eta''(u_-)| \le C\delta_1,
\]
we have
\[
\mathcal{D}_M  \le -  \eps \frac{\eta''(u_-)f''(u_-)}{2}  (1- C\delta_1)  \int_0^1 y(1-y)|\partial_y w|^2 dy.
\]
Thus,
\[
 \frac{2}{\eta''(u_-)f''(u_-)}\frac{\lambda^2}{\eps^3} \mathcal{D}_M \le -  (1- C\delta_1)  \int_0^1 y(1-y)|\partial_y W|^2 dy.
\]
Therefore, this and \eqref{minorD} yields
\beq\label{sum-D}
-  \frac{2}{\eta''(u_-)f''(u_-)}\frac{\lambda^2}{\eps^3}\mathcal{D} \le -  (1- C\delta_1)  \int_0^1 y(1-y)|\partial_y W|^2 dy +C\delta_1  \int_0^1 W^2 dy.
\eeq

\begin{lemma}\label{lem:D}
There exists a constant $C>0$ such that 
for any $\eps<\delta_1$, and any $y\in [0,1]$,
$$
\left|\frac{1}{y(1-y)}\frac{dy}{d\xi}-\eps \frac{f''(u_-)}{2}\right|\leq C\eps^2.
$$
\end{lemma}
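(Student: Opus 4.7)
The plan is to pass from $y$ to the physical variable $S$, use the shock ODE \eqref{shock-1} and Rankine-Hugoniot \eqref{RH-con}, and finish with a second-order Taylor estimate. Since $y=(u_--S)/\eps$ and $u_+=u_--\eps$, I have $1-y=(S-u_+)/\eps$ and $dy/d\xi=-S'/\eps$, so
\[
\frac{1}{y(1-y)}\frac{dy}{d\xi}=\frac{-\eps\,S'(\xi)}{(u_--S(\xi))(S(\xi)-u_+)}.
\]
Setting $h(S):=-S'/\bigl[(u_--S)(S-u_+)\bigr]$, it therefore suffices to prove the $\eps$-independent estimate $h(S)=\tfrac12 f''(u_-)+O(\eps)$ uniformly for $S\in[u_+,u_-]$; multiplying by $\eps$ then yields the $O(\eps^2)$ bound required by the lemma.

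The function $g(S):=f(S)-f(u_-)-\sigma(S-u_-)$ equals $S'$ by \eqref{shock-1}, and satisfies $g(u_-)=g(u_+)=0$ by \eqref{RH-con}. Since $f\in C^3$, so is $g$, and the standard mean-value form of the second divided difference (Newton's form of Hermite interpolation, using that both $g(u_+)$ and the first divided difference $g[u_+,u_-]$ vanish) gives
\[
g(S)=(S-u_-)(S-u_+)\cdot\tfrac12\, g''(\zeta(S))\qquad\text{for some }\zeta(S)\in[u_+,u_-].
\]
Using $u_--S=-(S-u_-)$ and $g''=f''$, this is exactly $h(S)=\tfrac12 f''(\zeta(S))$. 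Continuity of $f''$ on a fixed compact neighborhood of $u_-$ together with $|\zeta(S)-u_-|\le\eps$ produces $|f''(\zeta(S))-f''(u_-)|\le C\eps$, closing the argument.

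There is essentially no obstacle here: the lemma reduces to the elementary identity above, which is really just an algebraic repackaging of the Taylor expansion of $g$ around $u_-$ combined with the Rankine-Hugoniot condition. If one prefers to avoid the divided-difference formulation, the same conclusion follows by computing $h(u_-)=[f'(u_-)-\sigma]/\eps$ via L'H\^opital, expanding $\sigma = f'(u_-)-\tfrac12 f''(u_-)\eps+O(\eps^2)$ from \eqref{RH-con}, and then propagating the estimate to all of $[u_+,u_-]$ using the $\eps$-independent $C^1$-bound on $h$ inherited from a uniform $C^3$-bound on $g$ over a fixed neighborhood of $u_-$.
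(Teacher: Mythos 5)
Your proof is correct and reaches the same algebraic identity as the paper (both reduce the problem to the fact that $S'=g(S)$ with $g(S)=f(S)-f(u_-)-\sigma(S-u_-)$ vanishing at both $u_\pm$), but you close it differently. The paper rewrites $\frac{1}{y(1-y)}\frac{dy}{d\xi}$ as $\frac{f(S)-f(u_-)}{S-u_-}-\frac{f(S)-f(u_+)}{S-u_+}$ and then splits the difference from $\frac{\eps}{2}f''(u_-)$ into five explicit error terms $J_1,\dots,J_5$, bounding each by Taylor's theorem. You instead invoke the mean value form of the second divided difference to get $h(S)=\tfrac12 f''(\zeta(S))$ with $\zeta(S)\in[u_+,u_-]$ in a single step, which is noticeably more compact and makes the structure (double zero of $g$ at the endpoints) transparent; the paper's term-by-term route is more elementary but lengthier. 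One small imprecision to fix: you write that \emph{continuity} of $f''$ on a compact neighborhood yields $|f''(\zeta(S))-f''(u_-)|\le C\eps$, but continuity alone only gives $o(1)$, not $O(\eps)$; you need the local Lipschitz bound on $f''$, which holds because $f\in C^3$ (so $f''\in C^1$). That is exactly the regularity the paper assumes, so the conclusion stands once this is stated correctly.
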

\begin{proof} 
We first recall that (by \eqref{w1w2})
\[
\frac{dy}{d\xi} =-\frac{S'}{\eps},
\]
and (by \eqref{shock-1})
\begin{align*}
\begin{aligned}
S' = - \sigma (S -u_-) + f(S) -f(u_-).
\end{aligned}
\end{align*}
Using \eqref{RH-con}, we have
\[
S' = - \frac{1}{u_- -u_+} \Big( (f(u_-)-f(u_+)) (S -u_-) + (f(u_-)-f(S)) (u_--u_+) \Big).
\]
Then using  
\[
y=\frac{u_--S}{\eps},\quad 1-y=\frac{S-u_+}{\eps} \quad (\mbox{by } u_--u_+=\eps),
\]
we find
\[
\frac{1}{y(1-y)} \frac{dy}{d\xi} = \frac{f(S)-f(u_-)}{S-u_-} -  \frac{f(S)-f(u_+)}{S-u_+}.
\]
Therefore, we have
\begin{align*}
\begin{aligned}
\left|\frac{1}{y(1-y)}\frac{dy}{d\xi}-\eps \frac{f''(u_-)}{2}\right|   &= \left|  \frac{f(S)-f(u_-)}{S-u_-} -  \frac{f(S)-f(u_+)}{S-u_+} - \frac{f''(u_-)}{2} (u_--u_+) \right|  \\
&\le J_1+J_2+J_3+J_4+J_5,
\end{aligned}
\end{align*}
where
\begin{align*}
\begin{aligned}
&J_1:= \left|  \frac{f(S)-f(u_-)}{S-u_-} - f'(u_-) -  \frac{f''(u_-)}{2} (S-u_-) \right|, \\
&J_2:=  \left|  -  \frac{f(S)-f(u_+)}{S-u_+}  + f'(u_+) + \frac{f''(u_+)}{2} (S-u_+) \right|, \\
&J_3:=  \left|  f'(u_-) - f'(u_+) - f''(u_-) (u_- -u_+) \right| ,\\
&J_4:=  \left|  \frac{f''(u_-)}{2} (S-u_-) -  \frac{f''(u_+)}{2} (S-u_+) + \frac{f''(u_+)}{2} (u_--u_+) \right| ,\\
&J_5:=  \frac{1}{2} \left|  (f''(u_-)-f''(u_+)) (u_--u_+) \right|.
\end{aligned}
\end{align*}
We use Taylor theorem to have
\[
J_1+J_2+J_3+J_5 \le C\eps^2.
\]
Likewise, since
\[
J_4=  \frac{1}{2} \left|  (f''(u_-)-f''(u_+)) (S-u_-) \right|,
\]
we have $J_4  \le C\eps^2$.\\
Hence we have the desired estimate.
\end{proof}

$\bullet$ {\bf Uniform bound of $\int_0^1 W^2dy $:} 
We use \eqref{assYp} and \eqref{YW} to have
\begin{align*}
\begin{aligned}
\int_0^1 W^2dy-2\Big|\int_0^1 Wdy\Big|&\leq\int_0^1 W^2dy+2\int_0^1 Wdy\\
&\leq\left| \frac{2}{\eta''(u_-)} \frac{\lambda}{\eps^2} Y +   \int_0^1 W^2 dy +2 \int_0^1  W dy \right|+\frac{2}{\eta''(u_-)} \frac{\lambda}{\eps^2}|Y|\\
&\leq C\delta_1 \Big(\int_0^1 W^2 dy + \int_0^1 |W| dy\Big)+\frac{2}{\alpha} K,
\end{aligned}
\end{align*}
where $K$ is the constant in the assumption \eqref{assYp}.\\
Then using $$\Big|\int_0^1 Wdy\Big|\leq \int_0^1 |W|dy\leq \frac{1}{8}\int_0^1 W^2dy+2,$$
and taking $\delta_1$ small enough, we have
\begin{align*}
\begin{aligned}
\int_0^1 W^2dy
&\leq 2\Big|\int_0^1 Wdy\Big|+C\delta_1 \Big(\int_0^1 W^2 dy + \int_0^1 |W| dy\Big)+\frac{2}{\alpha}  K\\
&\leq \frac{1}{2}\int_0^1 W^2dy+C
\end{aligned}
\end{align*}
Therefore there exists a positive constant $M$ depending on $K$ such that
\beq\label{controlW}
\int_0^1 W^2dy \le M.
\eeq

 \vskip0.3cm
 $\bullet$ {\bf Estimate on  $-|Y|^2$:} 
 As in \cite{Kang-V-NS17}, we use the following inequality: For any $a,b\in \bbr$,  
 $$
 -a^2\leq -\frac{b^2}{2}+|b-a|^2.
 $$
 Using this inequality with 
 $$
 a=-  \frac{2}{\eta''(u_-)}\frac{\lambda}{\eps^2} Y ,\qquad b=\int_0^1W^2\,dy+2\int_0^1 W\,dy,
 $$
 we find
\begin{align*}
\begin{aligned}
  -  \frac{2}{\eta''(u_-)f''(u_-)} \frac{\lambda^2}{\eps^3} \frac{|Y|^2}{\eps \delta_1}
 &=-\frac{\eta''(u_-)}{2\delta_1 f''(u_-)}\left|   \frac{2}{\eta''(u_-)} \frac{\lambda}{\eps^2} Y \right|^2 \\
 &\leq -\frac{\eta''(u_-)}{4\delta_1 f''(u_-)} \left| \int_0^1W^2\,dy+2\int_0^1 W\,dy\right|^2\\
  &\quad +\frac{\eta''(u_-)}{2\delta_1 f''(u_-)} \left|  \frac{2}{\eta''(u_-)}  \frac{\lambda}{\eps^2} Y + \int_0^1 W^2 dy + 2\int_0^1 W dy  \right|^2.
\end{aligned}
\end{align*}
 Then using \eqref{YW}, we have
\begin{align*}
\begin{aligned}
  -  \frac{2}{\eta''(u_-)f''(u_-)}\frac{\lambda^2}{\eps^3} \frac{|Y|^2}{\eps \delta_1} 
  &\leq -\frac{\eta''(u_-)}{2\delta_1 f''(u_-)} \left| \int_0^1W^2\,dy+2\int_0^1 W\,dy\right|^2 \\
  &\quad +C\delta_1 \left(\int_0^1W^2\,dy+\int_0^1|W|\,dy \right)^2.
\end{aligned}
\end{align*}
Since \eqref{controlW} yields
 $$
 \left(\int_0^1W^2\,dy+\int_0^1|W|\,dy \right)^2\leq \left(\int_0^1W^2\,dy+\sqrt{\int_0^1|W|^2\,dy }\right)^2\leq C\int_0^1W^2\,dy,
 $$
we have
\beq\label{Y^2}
 -  \frac{2}{\eta''(u_-)f''(u_-)}\frac{\lambda^2}{\eps^3} \frac{|Y|^2}{\eps \delta_1} 
  \leq -\frac{\eta''(u_-)}{2\delta_1 f''(u_-)} \left| \int_0^1W^2\,dy+2\int_0^1 W \,dy\right|^2  +C\delta_1 \int_0^1W^2\,dy.
\eeq

\vskip0.3cm

$\bullet$ {\bf Conclusion:} 
We first find that for any $\delta<\delta_1$, 
\begin{align*}
\begin{aligned}
&\mathcal{R}_{\eps,\delta}\le -\frac{1}{\eps\delta_1}|Y|^2 +\left(\mathcal{B}_1+\mathcal{B}_2+\mathcal{B}_3+\mathcal{B}_4-\mathcal{G}_0 \right) \\
&\qquad\qquad\qquad + \delta_1\frac{\eps}{\lambda}\Big( |\mathcal{B}_1+\mathcal{B}_2+\mathcal{B}_3+\mathcal{B}_4 | +\mathcal{G}_0\Big) -(1-\delta_1)\mathcal{D}\le 0.
\end{aligned}
\end{align*}
Multiplying \eqref{sum-D} by $(1-\delta_1)$, and combining it with \eqref{sum-bg}, \eqref{sum-a} and \eqref{Y^2} with putting $C_*:=\frac{2 f''(u_-)}{\eta''(u_-)}$, we find 
\begin{align*}
\begin{aligned}
& \frac{2}{\eta''(u_-)f''(u_-)}\frac{\lambda^2}{\eps^3} \mathcal{R}_{\eps,\delta}\\
&\quad\le-\frac{1}{C_*\delta_1}\left(\int_0^1W^2\,dy+2\int_0^1 W\,dy\right)^2+(1+C \delta_1)\int_0^1 W^2\,dy\\
&\qquad+\frac{2}{3}\int_0^1 W^3\,dy +C \delta_1\int_0^1 |W|^3\,dy  -(1-C \delta_1)\int_0^1 y(1-y)|\partial_y W|^2\,dy.
\end{aligned}
\end{align*}
Let $\delta^*$ be the constant in Lemma \ref{lem_poincare} corresponding to the constant $M$ of \eqref{controlW}. \\
Taking $\delta_1$ small enough such that $\max(C_*, C) \delta_1\le \delta^*$, we have
\begin{align*}
\begin{aligned}
& \frac{2}{\eta''(u_-)f''(u_-)}\frac{\lambda^2}{\eps^3} \mathcal{R}_{\eps,\delta}\\
&\quad\le-\frac{1}{\delta_*}\left(\int_0^1W^2\,dy+2\int_0^1 W\,dy\right)^2+(1+ \delta_*)\int_0^1 W^2\,dy\\
&\qquad+\frac{2}{3}\int_0^1 W^3\,dy + \delta_*\int_0^1 |W|^3\,dy  -(1- \delta_*)\int_0^1 y(1-y)|\partial_y W|^2\,dy =: R_{\delta_*}(W).
\end{aligned}
\end{align*}
Therefore, using $R_{\delta_*}(W)\leq0$ by Lemma \ref{lem_poincare}, we have  $\mathcal{R}_{\eps,\delta}\leq 0$.

%\vspace{1cm}
\subsection{Truncation of the big values of $|\eta'(u)-\eta'(S)|$}\label{section-finale}
In order to use Proposition \ref{prop:near}, we need to show that the values for $|u|$ such that $|\eta'(u)-\eta'(S)| \geq\delta_1$ have a small effect. However, the value of $\delta_1$ is itself conditioned to the constant $K$ in Proposition \ref{prop:near}. Therefore, we need first to find a uniform bound on $Y$ which is not yet conditioned on the level of truncation $\delta_1$.\\

We consider a truncation on $|\eta'(u)-\eta'(S)|$ with a constant $r>0$. Later we will fix $r$ as $r=\delta_1$ where $\delta_1$ is the constant  in Proposition \ref{prop:near}.

For a given $r>0$, let $\psi_r$ be a continuous function defined by
\beq\label{psi}
 \psi_r(y):=\begin{cases}
&y \mbox{ if } |y|\leq r \\ 
&r \mbox{ if }  y>r \\
&-r \mbox{ if }  y<-r.
\end{cases}
\eeq
We then define the function $\bar u_r$ by
\beq\label{def-bar}
\eta'(\bar u_r)-\eta'(S) = \psi_r \left( \eta'(u)-\eta'(S) \right).
\eeq
Notice that once $r$ is fixed, $\bar u_r$ is uniquely determined since $\eta'$ is one to one. 
%We also see that
%\beq\label{st_bar}
%\Big|\eta'(\bar u_r)-\eta'(S) \Big|\leq r.
%\eeq

We first have the following lemma.
\begin{lemma}\label{lem_ey}
There exist constants $\delta_0$, $C, K>0$ such that 
for any $\eps, \lambda>0$ with $\delta_0^{-1}\eps<\lambda<\delta_0$, the following holds whenever $|Y(u)|\leq \eps^2$:
\beq\label{l1} 
\int_\bbr a' \eta(u|S)\,d\xi \leq C\frac{\eps^2}{\lambda},
\eeq 
and 
\beq\label{y-small}
 |Y(\bar u_r)|\leq K \frac{\varepsilon^2}{\lambda}\quad \mbox{ for any }  r>0 .
\eeq  
\end{lemma}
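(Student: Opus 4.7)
The plan is to prove \eqref{l1} first by isolating the quadratic piece inside the definition of $Y$ and absorbing the linear remainder by Cauchy--Schwarz plus Young's inequality, and then to deduce \eqref{y-small} as an easy consequence once one verifies that $\bar u_r$ lies pointwise between $S$ and $u$.

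For \eqref{l1}, I would write
\[
Y(u)=-I_1+I_2,\qquad I_1:=\int_\bbr a'\,\eta(u|S)\,d\xi\ge 0,\qquad I_2:=\int_\bbr a\,\eta''(S)\,S'\,(u-S)\,d\xi.
\]
Since $\eta''(S)$ is bounded near $u_-$, $a\le 1+\lambda$ is bounded, and $\int_\bbr|S'|\,d\xi=u_--u_+=\eps$, Cauchy--Schwarz gives
$|I_2|\le C\sqrt{\eps}\bigl(\int_\bbr|S'|\,(u-S)^2\,d\xi\bigr)^{1/2}$. Using $a'=-(\lambda/\eps)S'$ together with \eqref{e-est0} yields
$\int_\bbr|S'|\,(u-S)^2\,d\xi\le (2\eps/(\alpha\lambda))\,I_1$,
so $|I_2|\le C(\eps/\sqrt{\lambda})\sqrt{I_1}$. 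Combining with $I_1\le |Y|+|I_2|$, the assumption $|Y(u)|\le\eps^2$, Young's inequality to absorb $\sqrt{I_1}$, and the smallness $\lambda<\delta_0<1$ then produces $I_1\le C\eps^2/\lambda$.

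For \eqref{y-small}, the key observation is monotonicity: since $\eta'$ is strictly increasing (because $\eta''\ge\alpha>0$) and the cutoff $\psi_r$ of \eqref{psi} satisfies $|\psi_r(z)|\le|z|$ with $\mathrm{sgn}\,\psi_r(z)=\mathrm{sgn}\,z$, the definition \eqref{def-bar} forces $\bar u_r$ to sit between $S$ and $u$, with $|\bar u_r-S|\le|u-S|$. The monotonicity property \eqref{e-mono} of $u\mapsto\eta(u|S)$ then gives $\eta(\bar u_r|S)\le \eta(u|S)$ pointwise, hence $I_1(\bar u_r)\le I_1(u)\le C\eps^2/\lambda$ by \eqref{l1}. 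Running the Cauchy--Schwarz bound on the linear piece with $u$ replaced by $\bar u_r$ (which only uses $|\bar u_r-S|\le|u-S|$) gives $|I_2(\bar u_r)|\le C(\eps/\sqrt{\lambda})\sqrt{I_1(u)}\le C\eps^2/\lambda$, so $|Y(\bar u_r)|\le K\eps^2/\lambda$.

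The only delicate point is the bookkeeping in the Cauchy--Schwarz step for $I_2$: the $\sqrt{\eps}$ coming from $\int|S'|\,d\xi=\eps$ combines with the $\sqrt{\eps/\lambda}$ from the normalization $|S'|=(\eps/\lambda)a'$ to produce exactly the prefactor $\eps/\sqrt{\lambda}$ in front of $\sqrt{I_1}$, so that Young's absorption leaves a clean $\eps^2/\lambda$ on the right-hand side --- this is also where the condition $\delta_0^{-1}\eps<\lambda$ is not actually needed (only $\lambda<\delta_0<1$ enters), which is consistent with the statement that $K$ must not depend on the eventual truncation level $\delta_1$ used later in Proposition~\ref{prop:near}.
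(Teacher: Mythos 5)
Your argument is correct and is essentially the paper's own proof: the same decomposition $Y=-I_1+I_2$, the same Cauchy--Schwarz bound $|I_2|\le C(\eps/\sqrt{\lambda})\sqrt{I_1}$ (you carry $\int|S'|\,d\xi=\eps$ while the paper carries $\int a'\,d\xi=\lambda$, but since $a'=(\lambda/\eps)|S'|$ the bookkeeping is identical), the same Young absorption to obtain \eqref{l1}, and the same use of the pointwise ordering of $\bar u_r$ between $S$ and $u$ together with \eqref{e-mono} to propagate the bound to $Y(\bar u_r)$. Your closing remark is slightly overstated --- the lemma does still implicitly require $\eps$ to be bounded (so that $\eta''(S)$ is controlled), which is a consequence of $\delta_0^{-1}\eps<\lambda<\delta_0$ --- but you are right that the smallness of the \emph{ratio} $\eps/\lambda$ plays no role in this particular lemma.
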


\begin{proof}
$\bullet$ {\it Proof of \eqref{l1}:}
By the definition of $Y$, and $S'=-\frac{\eps}{\lambda}a'$, we have
\begin{align*}
\begin{aligned}
\int_\bbr a'\eta(u|S) d\xi &= -Y +\int_\bbr a\, \eta''(S) S' (u-S) d\xi \\
&\le \eps^2 + C \frac{\eps}{\lambda} \int_\bbr a' |u-S| d\xi\\
&\le \eps^2 + C \frac{\eps}{\lambda} \sqrt{\int_\bbr a' d\xi}\sqrt{\int_\bbr a' |u-S|^2 d\xi}.
\end{aligned}
\end{align*}
Then using \eqref{e-est0} and Young's inequality, we have
\begin{align*}
\begin{aligned}
\int_\bbr a'\eta(u|S) d\xi &\le \eps^2 + C \frac{\eps}{\lambda} \sqrt{\int_\bbr a' d\xi}\sqrt{\int_\bbr a' |u-S|^2 d\xi}\\
&\le \eps^2 + C \frac{\eps}{\sqrt\lambda}\sqrt{\frac{2}{\alpha}\int_\bbr a' \eta(u|S) d\xi}\\
&\le C\frac{\eps^2}{\lambda} + \frac{1}{2}\int_\bbr a' \eta(u|S) d\xi.
\end{aligned}
\end{align*}
Hence we have \eqref{l1}.

$\bullet$ {\it Proof of \eqref{y-small}:} 
Using the same estimates as above, we have
\begin{align*}
\begin{aligned}
|Y(\bar u_r)| &\le \int_\bbr a'\eta(\bar u_r|S) d\xi + C \frac{\eps}{\lambda} \int_\bbr a' |\bar u_r-S| d\xi\\
&\le  \int_\bbr a'\eta(\bar u_r|S) d\xi  + C \frac{\eps}{\sqrt\lambda}\sqrt{\int_\bbr a' \eta(\bar u_r|S) d\xi}.
\end{aligned}
\end{align*}
Since \eqref{def-bar} together with $\eta'>0$ implies either $S\le \bar u_r \le u$ or $u\le \bar u_r \le S$, it follows from \eqref{e-mono} that
\[
\eta(u|S) \ge \eta(\bar u_r |S).
\]
Therefore, using \eqref{l1}, there exists a constant $K>0$ such that
\[
|Y(\bar u_r)|\le  \int_\bbr a'\eta(u|S) d\xi  + C \frac{\eps}{\sqrt\lambda}\sqrt{\int_\bbr a' \eta(u|S) d\xi} \leq K \frac{\varepsilon^2}{\lambda}.
\]
\end{proof}

We now fix the constant $\delta_1 \le \min(1, 2|u_-|)$ of Proposition \ref{prop:near} associated to the constant $K$ of Lemma \ref{lem_ey}. From now on, we set 
$$
\bar u:=\bar u_{\delta_1}.
$$
Then it follows from Lemma \ref{lem_ey} and \eqref{def-bar} that
\begin{equation}\label{YC2}
|Y(\bar u)|\leq K \frac{\eps^2}{\lambda},\quad \left| \eta'(\bar u)-\eta'(S) \right| \le \delta_1,
\end{equation}
and therefore, it follows from Proposition \ref{prop:near} that
\begin{align}
\begin{aligned}\label{final-R}
&\mathcal{R}_{\eps,\delta_1}(\bar u)=-\frac{1}{\eps\delta_1}|Y(\bar  u)|^2 +\mathcal{B}_1(\bar  u)+\mathcal{B}_2(\bar  u)+\mathcal{B}_3( \bar u)+\mathcal{B}_4(\bar  u) \\
&\qquad\qquad\quad + \delta_1\frac{\eps}{\lambda}|\mathcal{B}_1(\bar  u)+\mathcal{B}_2(\bar u)+\mathcal{B}_3(\bar u)+\mathcal{B}_4(\bar u)| -\left(1-\delta_1\frac{\eps}{\lambda}\right)\mathcal{G}_0(\bar u)-(1-\delta_1)\mathcal{D}(\bar u)\le 0.
\end{aligned}
\end{align}

We recall that the functional  $\mathcal{G}$ in \eqref{badgood} consists of the two good terms $\mathcal{G}_0$ and $ \mathcal{D}$ in \eqref{badgood-n}, that is $\mathcal{G}=\mathcal{G}_0+\mathcal{D}$.\\
Note that it follows from \eqref{e-mono} that
\begin{equation}\label{eq_G}
 \mathcal{G}_0(u)- \mathcal{G}_0(\bar u)=\sigma \int_\bbr a' \left( \eta (u|S)-\eta(\bar u |S) \right)\,d\xi\geq 0,
\end{equation}
which together with  \eqref{l1} yields
\beq\label{l2}
0\leq \mathcal{G}_0(u)- \mathcal{G}_0(\bar u) \leq  C {\intr}a' \eta (u|S) d\xi \leq C\frac{\eps^2}{\lambda}.
\eeq
Also note that since $\eta'(\bar u)-\eta'(S)$ is constant for $u$ satisfying either $\eta'(u)-\eta'(S)<-\delta_1$ or $\eta'(u)-\eta'(S)>\delta_1$ (by \eqref{def-bar}), we find
$$
\mathcal{D}(\bar u)= \int_\bbr a  \mu(u) |\partial_\xi (\eta'(u)-\eta'(S))|^2 {\mathbf 1}_{\{|\eta'(u)-\eta'(S)) |\leq\delta_1\}} d\xi,
$$
and
\begin{equation}\label{bar-eq}
\begin{array}{rl}
|\eta'(u)-\eta'(\bar u)|=& |(\eta'(u)-\eta'(S))+(\eta'(S)-\eta'(\bar u))|\\[0.2cm]
=&|(\psi_{\delta_1}-I)(\eta'(u)-\eta'(S))|\\[0.2cm]
=& (|\eta'(u)-\eta'(S)|-\delta_1)_+,
\end{array}
\end{equation}
and therefore,
\begin{align}
\begin{aligned}\label{D_bar}
\mathcal{D}(u)&=\int_\bbr a \mu(u) |\partial_\xi (\eta'(u)-\eta'(S))|^2 d\xi\\
&=\int_\bbr a \mu(u) |\partial_\xi (\eta'(u)-\eta'(S))|^2( {\mathbf 1}_{\{|\eta'(u)-\eta'(S) |\leq\delta_1\}} + {\mathbf 1}_{\{|\eta'(u)-\eta'(S) |>\delta_1\}} )d\xi\\
&=\mathcal{D}(\bar u)+\int_\bbr a  \mu(u)  |\partial_\xi (\eta'(u)-\eta'(\bar u))|^2 d\xi\\
&\ge \int_\bbr a  \mu(u)  |\partial_\xi (\eta'(u)-\eta'(\bar u))|^2 d\xi,
\end{aligned}
\end{align}
which also yields
\begin{equation}\label{eq_D}
\mathcal{D}(u)-\mathcal{D}(\bar u)=\int_\bbr a  \mu(u)  |\partial_\xi (\eta'(u)-\eta'(\bar u))|^2 d\xi \geq0.
\end{equation}

For bad terms of $\mathcal{B}$ in \eqref{badgood}, we will use the following notations :
\beq\label{bad0}
\mathcal{B}(u)=\sum_{i=1}^8\mathcal{B}_i(u),
\eeq
where $\mathcal{B}_1(u), \mathcal{B}_2(u), \mathcal{B}_3(u), \mathcal{B}_4(u)$ are defined by \eqref{badgood-n}, and
\begin{align}
\begin{aligned}\label{newbad}
&\mathcal{B}_5(u):= - \int_\bbr a' \mu(u) \left( \eta'(u) - \eta'(S) \right)\partial_\xi  \left( \eta'(u) - \eta'(S) \right) d\xi\\
&\mathcal{B}_6(u):= - \int_\bbr a' \left( \eta'(u) - \eta'(S) \right) \left( \mu(u) - \mu(S) \right) \eta''(S)S' d\xi\\
&\mathcal{B}_7(u):= - \int_\bbr a \partial_\xi  \left( \eta'(u) - \eta'(S) \right) \left( \mu(u) - \mu(S) \right) \eta''(S)S' d\xi \\
&\mathcal{B}_8(u):= \int_\bbr a S''  (\eta')(u|S) d\xi,\\
\end{aligned}
\end{align}

We now state the following proposition.

\begin{proposition}\label{prop_out}
There exist constants $\delta_0, C, C^*>0$ (in particular, $C$ depends on the constant $\delta_1$ of Proposition \ref{prop:near}) such that for any $\delta_0^{-1}\eps<\lambda<\delta_0$, the following statements hold.
\begin{itemize}
\item[1.] For any $u$ such that $|Y(u)|\leq \eps^2$,
\begin{eqnarray}
\label{n1}
&&\sum_{i=1}^4|\mathcal{B}_i(u)-\mathcal{B}_i(\bar u) | \le C\sqrt{\frac{\eps}{\lambda}} \mathcal{D}(u),\\
\label{n2}
&&\sum_{i=5}^8 |\mathcal{B}_i(u)| \le \delta_0^{1/3}  \mathcal{D}(u) + C\delta_0\frac{\eps}{\lambda} \mathcal{G}_0(\bar u),\\
\label{n3}
&&|\mathcal{B}(U)| \le  \delta_0^{1/4} \mathcal{D}(u) + C^* \frac{\eps^2}{\lambda}.
\end{eqnarray}
\item[2.] For any $u$ such that $|Y(u)|\leq \eps^2$ and $\mathcal{D}(u)\leq \frac{C^*}{4}\frac{\eps^2}{\lambda}$,
\beq\label{m1}
|Y(u)-Y(\bar u)|^2 \le C \frac{\eps^2}{\lambda} \frac{\eps}{\lambda}\mathcal{D}(u).
\eeq
\end{itemize}
\end{proposition}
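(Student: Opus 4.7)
The plan is to split each relevant functional using a two-region decomposition: $\Omega_s := \{|\eta'(u)-\eta'(S)| \le \delta_1\}$, where by construction $\bar u = u$ identically, versus $\Omega_\ell := \{|\eta'(u)-\eta'(S)| > \delta_1\}$, where \eqref{bar-eq} gives $\eta'(u)-\eta'(\bar u) = (|\eta'(u)-\eta'(S)|-\delta_1)_+$ rising from zero at $\partial \Omega_\ell$. The differences $\mathcal{B}_i(u)-\mathcal{B}_i(\bar u)$ in (n1) and $Y(u)-Y(\bar u)$ in (m1) vanish on $\Omega_s$, so everything reduces to pointwise control on $\Omega_\ell$ via hypothesis $(\mathcal{H}2)$. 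The mechanism to convert such pointwise information into the diffusion functional is the one-dimensional Sobolev-type bound obtained by integrating $\partial_\xi(\eta'(u)-\eta'(\bar u))$ from $\partial\Omega_\ell$: pointwise values of $|\eta'(u)-\eta'(\bar u)|$ are controlled by $\int|\partial_\xi(\eta'(u)-\eta'(\bar u))|^2 d\xi$, which is dominated by $\mathcal{D}(u)$ thanks to \eqref{D_bar}. Paired with the smallness of $a'$ (whose $L^1$ mass is $\simeq \lambda$ since $a' = -(\lambda/\eps)S'$), this produces factors like $\sqrt{\eps/\lambda}$.

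For (n1), I would apply $(\mathcal{H}2)$(iv) to $\mathcal{B}_1$ (designed for the $F(u|S)$ term, where $F' = -\eta'' f$), $(\mathcal{H}2)$(ii) to $\mathcal{B}_2$, and $(\mathcal{H}2)$(i) together with convexity for the $(\eta')(u|S)$ term in $\mathcal{B}_3$, plus a similar treatment of $\mathcal{B}_4$ using the smallness $|S'| \lesssim \eps^2 e^{-c\eps|\xi|}$ from \eqref{tail}. On $\Omega_\ell$ each integrand is dominated by either a linear or quadratic expression in $|\eta'(u)-\eta'(S)|$, the regime selected by whether $|u|\le 2\theta$ or $|u|>2\theta$; Cauchy--Schwarz against the weight $a'$ (for linear pieces) and pointwise Sobolev control (for quadratic pieces) then yield the bound $C\sqrt{\eps/\lambda}\,\mathcal{D}(u)$. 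For (n2), each $\mathcal{B}_i$ for $i\in\{5,6,7,8\}$ carries either a small factor $S'$ or $S''$ (bounded via \eqref{tail}--\eqref{sec-S}) or a derivative $\partial_\xi(\eta'(u)-\eta'(S))$. Cauchy--Schwarz pairs that derivative with $\sqrt{\mathcal{D}(u)}$, while $|\mu(u)-\mu(S)|$ is controlled through $(\mathcal{H}2)$(iii); the remaining mass is absorbed by $\mathcal{G}_0(\bar u)$ via $(\eta')(u|S)\le C\eta(u|S)$ and the monotonicity \eqref{eq_G} of the relative entropy under truncation. Taking $\delta_0$ small, the prefactors can be arranged as $\delta_0^{1/3}$ and $C\delta_0\eps/\lambda$.

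For (n3), I would add to (n1) a direct estimate of $\mathcal{B}_i(\bar u)$ for $i=1,\ldots,4$: since $|\bar u-S|\le \alpha^{-1}\delta_1$ is bounded, the relative functionals involved are quadratic in $|\bar u-S|$ and hence bounded by $C\int a'\eta(\bar u|S)d\xi$, which through \eqref{e-mono} and \eqref{l1} is at most $C\eps^2/\lambda$; this produces the constant $C^*$. For (m1), the difference $Y(u)-Y(\bar u)$ is again supported on $\Omega_\ell$, and rewriting both $\eta(u|S)-\eta(\bar u|S)$ and $u-\bar u$ in terms of $\eta'(u)-\eta'(\bar u)$ reduces the task to a Cauchy--Schwarz pairing against $a'$ (of $L^1$ mass $\simeq \lambda$) combined with the Sobolev pointwise bound. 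The extra smallness assumption $\mathcal{D}(u)\le (C^*/4)\eps^2/\lambda$ provides the ``height'' control on $|\eta'(u)-\eta'(S)|-\delta_1$ needed to upgrade a naive factor $\mathcal{D}(u)$ to the sharper $(\eps/\lambda)\mathcal{D}(u)$ on the right-hand side.

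The hardest step is (n1) for $\mathcal{B}_1$: there is no direct pointwise quadratic bound for $F(u|S)$ when $|u|$ is large, because for a general strictly convex $f$ the antiderivative of $-\eta''f$ may grow faster than $\eta$. Hypothesis $(\mathcal{H}2)$(iv) is precisely engineered for this, but careful bookkeeping of its two-regime ``linear vs.\ quadratic'' structure against $\mathcal{D}(u)$ is delicate, and the combinations of powers of $\eps/\lambda$ must be balanced through Young's inequality to land cleanly on the exponents $\sqrt{\eps/\lambda}$, $\delta_0^{1/3}$, and $\delta_0^{1/4}$ appearing in the statement.
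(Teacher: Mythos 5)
Your high-level plan follows the same route as the paper: establish pointwise control of $\eta'(u)-\eta'(\bar u)$ by integrating its derivative and using Cauchy--Schwarz against the diffusion term, turn that into weighted $L^1/L^2$ bounds of the type $\int a'(|\eta'(u)-\eta'(\bar u)| + |\eta'(u)-\eta'(\bar u)|^2)\,d\xi \le C\sqrt{\eps/\lambda}\,\mathcal D(u)$, and then feed these into the individual $\mathcal B_i$ using the hypotheses in $(\mathcal H 2)$. The treatment of $(n3)$ via quadratic bounds on $\mathcal B_i(\bar u)$ and the algebraic squaring for $(m1)$ also match.

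However, there is a genuine gap in the mechanism you describe. You propose to get the pointwise Sobolev-type bound ``by integrating $\partial_\xi(\eta'(u)-\eta'(\bar u))$ from $\partial\Omega_\ell$,'' i.e.\ from the nearest boundary point of $\Omega_\ell := \{|\eta'(u)-\eta'(S)| > \delta_1\}$, where $\eta'(u)-\eta'(\bar u)$ vanishes. But this requires two things you do not justify: (a) that $\partial\Omega_\ell$ is nonempty (a priori $\Omega_\ell$ could be all of $\bbr$), and (b) that for the $\xi$ you care about the distance to $\partial\Omega_\ell$ is controlled by $|\xi| + 1/\eps$, which is what ultimately produces the factor $\sqrt{\eps/\lambda}$ after pairing with $a' \simeq \lambda\eps e^{-c\eps|\xi|}$ and splitting $|\xi|\lessgtr \eps^{-1}\sqrt{\lambda/\eps}$. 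The paper supplies exactly this missing ingredient: from the global bound $\int a'\eta(u|S)\,d\xi \le C\eps^2/\lambda$ (which itself uses $|Y(u)|\le\eps^2$) and the lower bound $\inf_{[-1/\eps,1/\eps]}a' \ge C\lambda\eps$ (from $|S'|\ge C\eps^2$ on the core), one finds a point $\xi_0\in[-1/\eps,1/\eps]$ with $\eta(u(\xi_0)|S(\xi_0))\le C(\eps/\lambda)^2$; by \eqref{e-est3} this forces $|\eta'(u)-\eta'(S)|(\xi_0)\le C\eps/\lambda < \delta_1/2$ for $\delta_0$ small, so $(\eta'(u)-\eta'(\bar u))(\xi_0)=0$. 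Integrating from $\xi_0$ anchors the pointwise bound as $|(\eta'(u)-\eta'(\bar u))(\xi)|\le C\sqrt{|\xi|+1/\eps}\sqrt{\mathcal D(u)}$. Without this zero-point argument tied to $|Y(u)|\le\eps^2$ your sketch has no starting point and the exponent in $\sqrt{\eps/\lambda}$ cannot be extracted. This is the crux of the lemma and needs to be made explicit.

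Two smaller points. First, for $\mathcal B_3$ in $(n1)$ you invoke $(\mathcal H2)$(i), but $(\mathcal H2)$(i) relates $|\eta'(u)-\eta'(v)|$ to $\eta(u|v)$ and is not what is needed for the \emph{difference} $\mathcal B_3(u)-\mathcal B_3(\bar u)$; what is actually used is the elementary expansion $|(\eta')(u|S)-(\eta')(\bar u|S)|\le|\eta'(u)-\eta'(\bar u)|+C|u-\bar u|$ followed by \eqref{l3}--\eqref{l4}. Second, the linear piece of the weighted estimate requires a half-level truncation trick (comparing $\bar u_{\delta_1}$ with $\bar u_{\delta_1/2}$ through $(y-\delta_1)_+\le (2/\delta_1)(y-\delta_1/2)_+^2$) to reduce the linear term to a quadratic one; your ``Cauchy--Schwarz for linear pieces'' gloss loses a factor of $\sqrt{\lambda}$ compared to the target and does not close as stated.
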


To prove Proposition \ref{prop_out}, we first show the following estimates.

\begin{lemma}\label{lemma_oublie}
Under the same assumption as in Proposition \ref{prop_out}, for any $u$ such that $|Y(u)|\leq \eps^2$, the following holds:
\begin{eqnarray}
\label{pw}
&& \left|(\eta' (u)- \eta'(\bar u))(\xi)\right|   \le C\sqrt{|\xi|+\frac{1}{\eps}}\sqrt{\mathcal{D}(u)}\qquad \mbox{for all } \xi\in\bbr,\\
\label{l3}
&&  \int_\bbr a' \Big( |\eta' (u)- \eta'(\bar u)|^2 + |\eta' (u)- \eta'(\bar u)|  \Big) \,d\xi \leq C\sqrt{\frac{\eps}{\lambda}}\mathcal{D}(u),\\
\label{l4}
&&  \int_\bbr a' |u-\bar u| \,d\xi \leq C\sqrt{\frac{\eps}{\lambda}}\mathcal{D}(u).
\end{eqnarray}
%\item[2] If both $|Y(U)|\leq \eps^2$ and $\mathcal{D}(\bar U)\leq C\eps^2/\lambda$, then:
%\end{itemize}
\end{lemma}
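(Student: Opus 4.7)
\emph{Setup.} Write $g := \eta'(u) - \eta'(S)$ and $g_* := \eta'(u) - \eta'(\bar u)$. From \eqref{def-bar} and \eqref{bar-eq} one has $g_* = (|g|-\delta_1)_+\sgn(g)$, so $g_*$ is supported on $\{|g|>\delta_1\}$ and vanishes on $\{|g|\le \delta_1\}$. The identity \eqref{D_bar} combined with $a\ge1$ and the uniform positive lower bound on $\mu(u)=1/\eta''(u)$ (valid because $u$ ranges in a bounded set on which $\eta''$ is continuous) yields $\|\partial_\xi g_*\|_{L^2(\bbr)}^2 \le C\mathcal{D}(u)$.

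\emph{Proof of \eqref{pw}.} The strategy is to locate a pivot point $\xi_*\in[-1/\eps,1/\eps]$ with $g_*(\xi_*)=0$, then apply the fundamental theorem of calculus. From \eqref{l1} together with \eqref{e-est0} and the fact that $|u-S|\ge c|g|$ (since $\eta''$ is bounded on the bounded range of $u$ and $S$), one obtains $\int_\bbr a' g^2\,d\xi\le C\eps^2/\lambda$. The lower bound \eqref{low-S} gives $a'(\xi) = -(\lambda/\eps)S'(\xi)\ge c\lambda\eps$ on $[-1/\eps,1/\eps]$, so
$$\int_{-1/\eps}^{1/\eps} g^2\,d\xi \le C\frac{\eps}{\lambda^2}.$$
By Chebyshev there exists $\xi_*\in[-1/\eps,1/\eps]$ with $|g(\xi_*)|^2\le C\eps^2/\lambda^2 \le C\delta_0^2 < \delta_1^2$, the last inequality holding for $\delta_0$ small enough since $\eps/\lambda<\delta_0$. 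In particular $g_*(\xi_*)=0$, and Cauchy--Schwarz gives
$$|g_*(\xi_0)|^2 = \left|\int_{\xi_*}^{\xi_0}\partial_\xi g_*\,d\xi\right|^2 \le |\xi_0-\xi_*|\,\|\partial_\xi g_*\|_{L^2}^2 \le C(|\xi_0|+1/\eps)\,\mathcal{D}(u).$$

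\emph{Proof of \eqref{l3} and \eqref{l4}.} Since $u$ and $\bar u$ are uniformly bounded, $|g_*|$ is bounded, so the $|g_*|^2$ piece of \eqref{l3} is dominated by the $|g_*|$ piece; it therefore suffices to control $\int a'|g_*|\,d\xi$. I combine two auxiliary bounds. First, Markov's inequality applied to $\int a'g^2\,d\xi\le C\eps^2/\lambda$ gives
$$\int_\bbr a'\mathbf{1}_{\{g_*\ne0\}}\,d\xi \le \frac{1}{\delta_1^2}\int_\bbr a'g^2\,d\xi \le \frac{C\eps^2}{\delta_1^2\lambda}.$$
Second, integrating the pointwise bound \eqref{pw} against $a'$ and using the exponential profile \eqref{tail} of $S'$ (hence of $a'$) to compute $\int_\bbr a'(|\xi|+1/\eps)\,d\xi\le C\lambda/\eps$ yields $\int_\bbr a'g_*^2\,d\xi\le C\lambda\mathcal{D}(u)/\eps$. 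Cauchy--Schwarz between these two estimates produces $\int a'|g_*|\,d\xi\le C\sqrt{\eps\,\mathcal{D}(u)}$. Interpolating this with the direct algebraic bound $\int a'|g_*|\,d\xi\le C\eps^2/(\delta_1\lambda)$---which follows from the identity $|g_*|^2+2\delta_1|g_*|=g^2-\delta_1^2$ valid on the support of $g_*$ together with \eqref{l1}---and using the smallness $\eps/\lambda<\delta_0$ allows one to absorb the extra powers of $\mathcal{D}$ and upgrade the right-hand side to $C\sqrt{\eps/\lambda}\,\mathcal{D}(u)$. Finally \eqref{l4} is immediate from \eqref{l3}, since the hypothesis $\eta''\ge\alpha$ and the mean value theorem give $|u-\bar u|\le\alpha^{-1}|\eta'(u)-\eta'(\bar u)|=\alpha^{-1}|g_*|$.

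\emph{Main obstacle.} The delicate point is the sharp scaling in \eqref{l3}: a plain application of Cauchy--Schwarz only yields the bound $\sqrt{\eps\,\mathcal{D}(u)}$, and refining this to $\sqrt{\eps/\lambda}\,\mathcal{D}(u)$ will require carefully combining \eqref{pw} with the direct weighted $L^1$-type bound coming from \eqref{l1}, exploiting both the exponential decay of $a'$ away from the shock and the uniform boundedness of $g_*$. The identification of the correct pivot $\xi_*$ in the $[-1/\eps,1/\eps]$ window, together with the fact that the threshold $\delta_1$ in the truncation is larger than the natural oscillation $\eps/\lambda$ of $g$ near the shock, is what makes the scheme consistent.
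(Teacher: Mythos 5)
Your proof of \eqref{pw} is essentially the paper's argument (pivot point in $[-1/\eps,1/\eps]$, then fundamental theorem of calculus and Cauchy--Schwarz against $\mathcal{D}(u)$), and your derivation of \eqref{l4} from \eqref{l3} via $\eta''\geq\alpha$ is exactly the paper's. The difficulty is in \eqref{l3}, where your interpolation step does not close.

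Writing $g_*:=\eta'(u)-\eta'(\bar u)$, your two inputs are $\int_\bbr a'|g_*|\,d\xi\leq C\sqrt{\eps\,\mathcal{D}(u)}$ (Cauchy--Schwarz between $\int a'{\mathbf 1}_{\{g_*\neq0\}}\,d\xi\leq C\eps^2/\lambda$ and the crude global bound $\int a'|g_*|^2\,d\xi\leq C(\lambda/\eps)\mathcal{D}(u)$) and the $\mathcal{D}$-free bound $\int_\bbr a'|g_*|\,d\xi\leq C\eps^2/\lambda$. Neither of these, nor any interpolation between them, is $\leq C\sqrt{\eps/\lambda}\,\mathcal{D}(u)$ in general: the target is linear in $\mathcal{D}$, but $\sqrt{\eps\,\mathcal{D}}$ carries only half a power of $\mathcal{D}$ and $\eps^2/\lambda$ carries none. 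Indeed $\sqrt{\eps\,\mathcal{D}}\leq\sqrt{\eps/\lambda}\,\mathcal{D}$ forces $\mathcal{D}\geq\lambda$, and $\eps^2/\lambda\leq\sqrt{\eps/\lambda}\,\mathcal{D}$ forces $\mathcal{D}\geq\eps^{3/2}\lambda^{-1/2}$; since $\eps^{3/2}\lambda^{-1/2}<\lambda$ (as $\eps<\lambda$), the range $0<\mathcal{D}<\eps^{3/2}\lambda^{-1/2}$ is covered by neither and is certainly attainable. The paper avoids Cauchy--Schwarz here entirely. It splits the integral at $|\xi|=L:=\frac{1}{\eps}\sqrt{\lambda/\eps}$; on $|\xi|\leq L$ it bounds
\[
\int_{|\xi|\leq L} a'|g_*|^2\,d\xi \leq \Big(\sup_{|\xi|\leq L}|g_*|^2\Big)\int_\bbr a'{\mathbf 1}_{\{g_*\neq0\}}\,d\xi,
\]
with $\sup_{|\xi|\leq L}|g_*|^2\leq C L\,\mathcal{D}(u)$ from \eqref{pw} and $\int a'{\mathbf 1}_{\{g_*\neq0\}}\,d\xi\leq C\eps^2/\lambda$ from the indicator inequality ${\mathbf 1}_{\{g_*\neq0\}}\leq C\delta_1^{-2}\eta(u|S)$ together with \eqref{l1}; the product of the two factors is exactly $C\sqrt{\eps/\lambda}\,\mathcal{D}(u)$. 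On $|\xi|>L$ it uses \eqref{pw} together with the exponential tail of $a'$ from \eqref{tail}. Both pieces come out linear in $\mathcal{D}$; this product-of-factors bound, localized to $|\xi|\leq L$, is the step your proposal misses. The reduction from $\int a'|g_*|$ to $\int a'|g_*|^2$ is then obtained via the elementary inequality $(y-\delta_1)_+\leq\frac{2}{\delta_1}(y-\delta_1/2)_+^2$ applied to $y=|\eta'(u)-\eta'(S)|$, rather than by invoking the $L^\infty$ boundedness of $g_*$.

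A secondary remark: several of your steps rely on the boundedness of $u$ (to get $|\eta'(u)-\eta'(S)|^2\leq C\eta(u|S)$ globally, and to lower-bound $\mu(u)$), which makes the resulting constants depend on $\|u_0\|_{L^\infty}$. The paper routes these bounds through \eqref{e-est3} and the hypotheses $(\mathcal{H}2)$ so that the constants depend only on $u_-$, $\alpha$, and $\delta_1$; this matters since $\delta_0$ in Theorem \ref{thm_general} must be fixed before $u_0$ is given.
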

\begin{proof}
$\bullet$ {\it Proof of \eqref{pw}:}
First, using \eqref{l1} and \eqref{low-S} together with $a'=(\lambda/\eps) |S'|$, we get
\begin{eqnarray*}
2\eps\int_{-1/\eps}^{1/\eps} \eta (u|S)\, d\xi&\leq& \frac{2\eps}{\inf_{[-1/\eps,1/\eps]} a'}\int_\bbr a' \eta (u|S)\,d\xi\\
&\leq& C \frac{\eps}{\lambda\eps}\frac{\eps^2}{\lambda}=C\left(\frac{\eps}{\lambda}\right)^2.
\end{eqnarray*}
Then there exists $\xi_0\in [-1/\eps,1/\eps]$ such that 
\[
\eta(u(\xi_0)|S(\xi_0))\leq C\left(\frac{\eps}{\lambda}\right)^2.
\]
Thus using \eqref{e-est3}, we have 
$$
|(\eta'(u)-\eta'(S))(\xi_0)|\leq C\frac{\eps}{\lambda}.
$$
Therefore, if $\deo$ is small enough such that $C\eps/\lambda \le C\deo \leq \delta_1/2$, then the definition of $\bar u$ implies
$$
(\eta'(u)-\eta'(\bar u))(\xi_0)=0.
$$
Hence using the assumption $\eta''\ge\alpha$ together with \eqref{D_bar}, we find that for any $\xi\in \bbr$,
\begin{align*}
\begin{aligned}
|(\eta'(u)-\eta'(\bar u)) (\xi)|&=\left|\int_{\xi_0}^\xi \partial_\zeta(\eta'(u)-\eta'(\bar u)) \,d\zeta\right|\\
&\le \frac{1}{\sqrt\alpha} \sqrt{|\xi-\xi_0|} \sqrt{\int_\bbr a \mu(u) |\partial_\zeta (\eta'(u)-\eta'(\bar u)) |^2 \,d\zeta }\\
&\le  C\sqrt{|\xi|+\frac{1}{\eps}}\sqrt{\mathcal{D}(u)}.
\end{aligned}
\end{align*}

$\bullet$ {\it Proof of \eqref{l3}:}
We first notice that since $(y-\delta_3/2)_+\geq \delta_3/2$ whenever $(y-\delta_3)_+>0$, we have
\beq\label{y-identity}
(y-\delta_3)_+\leq (y-\delta_3/2)_+{\mathbf 1}_{\{y-\delta_3>0\}}\leq (y-\delta_3/2)_+\left(\frac{(y-\delta_3/2)_+}{\delta_3/2}\right)\leq \frac{2}{\delta_3} (y-\delta_3/2)_+^2.
\eeq
Therefore, to show \eqref{l4}, it is enough to handle the quadratic part with $\bar v$ defined with $\delta_3/2$ instead of $\delta_3$. We will keep the notation $\bar v$ for this case below.\\

We split the quadratic part into two parts:
\begin{align*}
\begin{aligned}
\int_\bbr a' |\eta' (u)- \eta'(\bar u)|^2 \,d\xi &\le \underbrace{\int_{|\xi|\le\frac{1}{\eps}\sqrt{\frac{\lambda}{\eps}}} a' |\eta' (u)- \eta'(\bar u)|^2  \,d\xi }_{=:J_1}  + \underbrace{\int_{|\xi|\geq\frac{1}{\eps}\sqrt{\frac{\lambda}{\eps}}} a' |\eta' (u)- \eta'(\bar u)|^2 \,d\xi }_{=:J_2}.
\end{aligned}
\end{align*}
To control $J_1$, we first observe that the definition of $\bar u$ implies 
\[
|(\eta'(u)-\eta'(S))(\xi)|> \delta_1, \quad\mbox{whenever $\xi$ satisfying $|(\eta'(u)-\eta'(\bar u))(\xi)|>0$}. 
\]
Since
\beq\label{S-bdd}
|S| \le 2|u_-| \quad\mbox{by  $|S-u_-|\le\eps<\delta_0^2< |u_-|  $~ for $\delta_0$ small enough,} 
\eeq
\beq\label{use1}
\mbox{we use the hypothesis (i) of ($\mathcal{H}$2) with holding $\theta=2|u_-|$ to find}
\eeq
\[
\eta(u|S) \ge C\delta_1^2 , \quad\mbox{whenever $\xi$ satisfying $|(\eta'(u)-\eta'(\bar u))(\xi)|>0$},
\]
which gives
\begin{equation}\label{beta2}
{\mathbf 1}_{\{|\eta'(u)-\eta'(\bar u)|>0\}}\leq C \delta_1^{-2} \eta(u|S).
\end{equation}
We now use \eqref{pw}, \eqref{beta2} and \eqref{l1} to estimate
\begin{align*}
\begin{aligned}
J_1&\leq\left(\sup_{\left[-\frac{1}{\eps}\sqrt{\frac{\lambda}{\eps}} ,\frac{1}{\eps}\sqrt{\frac{\lambda}{\eps}} \right]} |\eta' (u)- \eta'(\bar u)|^2 \right)  \int_{|\xi|\le\frac{1}{\eps}\sqrt{\frac{\lambda}{\eps}}} a'  {\mathbf 1}_{\{|\eta'(u)-\eta'(\bar u)|>0\}} d\xi \\
& \leq C\frac{1}{\eps}\sqrt{\frac{\lambda}{\eps}} \mathcal{D}(u)   \int_\bbr a'  \eta(u|S) \,d\xi \leq C\sqrt{\frac{\eps}{\lambda}} \mathcal{D}(u),
\end{aligned}
\end{align*}
where the constant $C$ depends on $\delta_1$.\\

Using \eqref{pw} and \eqref{tail} with $a' =(\eps/\lambda) S'$, we have
\begin{align*}
\begin{aligned}
J_2&\leq C\mathcal{D}(u)   \int_{|\xi|\geq\frac{1}{\eps}\sqrt{\frac{\lambda}{\eps}}} a' \left(|\xi|+\frac{1}{\eps}\right)\,d\xi \leq C\mathcal{D}(u)   \int_{|\xi|\geq\frac{1}{\eps}\sqrt{\frac{\lambda}{\eps}}} a'  |\xi| \,d\xi\\
& \le C\mathcal{D}(u)  \eps\lambda\int_{|\xi|\geq\frac{1}{\eps}\sqrt{\frac{\lambda}{\eps}}}e^{-c\eps|\xi|} |\xi| \,d\xi \le C\mathcal{D}(u) \frac{\lambda}{\eps} \int_{|\xi|\geq \sqrt{\frac{\lambda}{\eps}}} |\xi| e^{-c|\xi|}d\xi.
\end{aligned}
\end{align*}
Then, taking $\deo$ small enough such that $|\xi|\leq e^{(c/2)|\xi|}$ for $\xi\geq \sqrt{\lambda/\eps}$ where $\eps/\lambda\leq \deo$, we have
$$
\frac{\lambda}{\eps} \int_{|\xi|\geq \sqrt{\frac{\lambda}{\eps}}} |\xi| e^{-c|\xi|}d\xi \le \frac{\lambda}{\eps} \int_{|\xi|\geq \sqrt{\frac{\lambda}{\eps}}} e^{-\frac{c}{2}|\xi|}d\xi=\frac{2\lambda}{c\eps}e^{-\frac{c}{2}\sqrt{\frac{\lambda}{\eps}}}\leq \sqrt{\frac{\eps}{\lambda}}.
$$
Hence we have 
\[
\int_\bbr a' |\eta' (u)- \eta'(\bar u)|^2 \,d\xi \le C\sqrt{\frac{\eps}{\lambda}} \mathcal{D}(u).
\]
We now recall $\bar u=\bar u_{\delta_3/2}$ in the above estimate, as mentioned before. Therefore we use \eqref{bar-eq} to have
\begin{align*}
\begin{aligned}
\int_\bbr a' |\eta' (u)- \eta'(\bar u_{\delta_3})|^2\,d\xi &=\int_\bbr a'(|\eta' (u)- \eta'(S)|-\delta_3)_+^2\,d\xi \\
&\le \int_\bbr a'(|\eta' (u)- \eta'(S)| -\delta_{\delta_3/2})_+^2\,d\xi\\
&=\int_\bbr a' |\eta' (u)- \eta'(\bar u_{\delta_3/2})|^2\,d\xi \leq C\sqrt{\frac{\eps}{\lambda}} \mathcal{D}(u).
\end{aligned}
\end{align*}
Likewise, using \eqref{bar-eq} and \eqref{y-identity} with $y:=|p(v)-p(\tilde v_\eps)|$, we have
\[
\int_\bbr a'|\eta' (u)- \eta'(\bar u_{\delta_3})|\,d\xi \le \frac{2}{\delta_3}\int_\bbr a' |\eta' (u)- \eta'(\bar u_{\delta_3/2})|^2 \,d\xi \leq C\sqrt{\frac{\eps}{\lambda}} \mathcal{D}(u).
\]
Hence we have \eqref{l3}.\\

$\bullet$ {\it Proof of \eqref{l4}:}
Since the hypothesis $\eta''\ge\alpha$ implies
\[
|\eta'(u)-\eta'(\bar u)| \ge \alpha |u-\bar u |,
\]
it follows from \eqref{l4} that
\[
\int_\bbr a'  | u- \bar u|  \,d\xi  \le \frac{1}{\alpha}\int_\bbr a'  |\eta' (u)- \eta'(\bar u)|  \,d\xi \leq C\sqrt{\frac{\eps}{\lambda}}\mathcal{D}(u).
\]
\end{proof}

\subsubsection{\bf Proof of Proposition \ref{prop_out}} We first show the estimates of \eqref{n1}.\\

$\bullet$ {\it Proof of \eqref{n1}} :
Recall the functional $\mathcal{B}_1$ in \eqref{badgood-n}. Since $F'=-\eta'' f$ is continuous and $|S|\le 2|u_-|$, the definition of the relative functional implies 
\beq\label{rel-F}
|F(u|S)-F(\bar u|S)| \le |F(u)-F(\bar u)| + C|u-\bar u|.
\eeq
Thus, we have
\[
|\mathcal{B}_1(u)-\mathcal{B}_1(\bar u) | \le  \underbrace{\int_\bbr a' |F(u)-F(\bar u)| d\xi }_{=:J}+   \int_\bbr a' |u-\bar u| d\xi .
\]
To control $J$, we first observe that using \eqref{S-bdd} and $|\eta'(\bar u)-\eta'(S)|\le \delta_1$, we have
\beq\label{u-bdd}
|\bar u|\le |\bar u- S|+|S|\le \frac{|\eta'(\bar u)-\eta'(S)|}{\alpha} +2 |u_-| \le \frac{\delta_1}{\alpha} +2|u_-| \le 3|u_-|.
\eeq
\beq\label{use2}
\mbox{Then it follows from the hypothesis (iv) of ($\mathcal{H}$2) with holding $\theta=   3|u_-|$ that}
\eeq
\[
 |F(u)-F(\bar u)| \le C \Big( |\eta'(u)-\eta'(\bar u)| {\mathbf 1}_{\{ |u| \le 2\theta \}}  +|\eta'(u)-\eta'(\bar u)|^2 {\mathbf 1}_{\{|u| > 2\theta \}}\Big).
\]
Therefore we use \eqref{l3} to have
\[
J \leq  C \int_\bbr a'  \Big( |\eta'(u)-\eta'(\bar u)| +|\eta'(u)-\eta'(\bar u)|^2 \Big) d\xi \le C\sqrt{\frac{\eps}{\lambda}} \mathcal{D}(u).
\]
Moreover using \eqref{l4}, we have
\[
|\mathcal{B}_1(u)-\mathcal{B}_1(\bar u) | \le C\sqrt{\frac{\eps}{\lambda}} \mathcal{D}(u).
\]
Likewise, for $\mathcal{B}_3$ and $\mathcal{B}_4$, we use the same reason as in \eqref{rel-F} to have
\begin{align*}
\begin{aligned}
&|(\eta')(u|S)-(\eta')(\bar u|S)| \le |\eta'(u)-\eta'(\bar u)| + C|u-\bar u|,\\
&|f(u|S)-f(\bar u|S)| \le |f(u)-f(\bar u)| + C|u-\bar u|.
\end{aligned}
\end{align*}
Then using $|f(S)|\le C$ and $|\eta''(S)|\le C$ together with $S' =(\eps/\lambda) a'$, we have
\begin{align*}
\begin{aligned}
&|\mathcal{B}_3(u)-\mathcal{B}_3(\bar u) | \le  C \Big(\int_\bbr a' |\eta'(u)-\eta'(\bar u)| d\xi +   \int_\bbr a' |u-\bar u| d\xi \Big) ,\\
&|\mathcal{B}_4(u)-\mathcal{B}_4(\bar u) | \le  C \frac{\eps}{\lambda} \Big(\int_\bbr a' |f(u)-f(\bar u)| d\xi +  C \int_\bbr a' |u-\bar u| d\xi  \Big).
\end{aligned}
\end{align*}
Using \eqref{l3} and \eqref{l4}, we have
\[
|\mathcal{B}_3(u)-\mathcal{B}_3(\bar u) | \le C\sqrt{\frac{\eps}{\lambda}} \mathcal{D}(u).
\]
\beq\label{use3}
\mbox{Using the hypothesis (ii) of ($\mathcal{H}$2) with holding $\theta= 3|u_-|$ (by \eqref{u-bdd}), we have}
\eeq
\[
|\mathcal{B}_4(u)-\mathcal{B}_4(\bar u) | \le  C \frac{\eps}{\lambda} \Big(\int_\bbr a' |\eta'(u)-\eta'(\bar u)| d\xi +   \int_\bbr a' |u-\bar u| d\xi \Big)\le C\sqrt{\frac{\eps}{\lambda}} \mathcal{D}(u).
\]
To estimate $|\mathcal{B}_2(u)-\mathcal{B}_2(\bar u)|$, we first separate it into two parts:
\begin{align*}
\begin{aligned}
|\mathcal{B}_2(u)-\mathcal{B}_2(\bar u)|  & = \Big| \int_\bbr a'  \left( \eta'(u) - \eta'(S) \right)\left( f(u) - f(S) \right) -\left( \eta'(\bar u) - \eta'(S) \right)\left( f(\bar u) - f(S) \right)  d\xi \Big|\\
&= \Big| \int_\bbr a'  \left( \eta'(u) - \eta'(\bar u) \right)\left( f(u) - f(S) \right) +\left( \eta'(\bar u) - \eta'(S) \right)\left( f(u) - f(\bar u) \right)  d\xi \Big|\\
&\le  \int_\bbr a' \left| \eta'(u) - \eta'(\bar u) \right|\left| f(u) - f(S) \right| d\xi \\
&\quad + \int_\bbr a' \left| \eta'(\bar u) - \eta'(S) \right|\left| f(u) - f(\bar u) \right|  d\xi =: J_1 +J_2.
\end{aligned}
\end{align*}
\beq\label{use4}
\mbox{Since the hypothesis (ii) of ($\mathcal{H}$2) with holding $\theta=2|u_-|$ (by \eqref{S-bdd}) yields}
\eeq
\[
|f(u)-f(S)|\le C | \eta'(u) - \eta'(S) |,
\]
we use the definition of $\bar u$ and \eqref{l3} to have
\begin{align*}
\begin{aligned}
J_1  & \le  \int_\bbr a' \left| \eta'(u) - \eta'(\bar u) \right|\left| \eta'(u) - \eta'(S) \right| d\xi \\
& \le  \int_\bbr a' \left| \eta'(u) - \eta'(\bar u) \right| \Big( \left| \eta'(u) - \eta'(\bar u) \right| +\left| \eta'(\bar u) - \eta'(S) \right| \Big)d\xi \\
& \le  \int_\bbr a' \Big(\left| \eta'(u) - \eta'(\bar u) \right|^2 + \delta_1 \left| \eta'(u) - \eta'(\bar u) \right| \Big) d\xi  \le C\sqrt{\frac{\eps}{\lambda}} \mathcal{D}(u).
\end{aligned}
\end{align*}
\beq\label{use5}
\mbox{Likewise, using (ii) of ($\mathcal{H}$2) with holding $\theta= 3 |u_-|$, we have}
\eeq
\begin{align*}
\begin{aligned}
J_2  & \le  \int_\bbr a' \left| \eta'(\bar u) - \eta'(S) \right|\left| \eta'(u) - \eta'(\bar u) \right| d\xi \\
& \le  \int_\bbr a' \delta_1 \left| \eta'(u) - \eta'(\bar u) \right| d\xi \le C\sqrt{\frac{\eps}{\lambda}} \mathcal{D}(u).
\end{aligned}
\end{align*}
Therefore we have
\[
|\mathcal{B}_2(u)-\mathcal{B}_2(\bar u)|   \le  C\sqrt{\frac{\eps}{\lambda}} \mathcal{D}(u).
\]

$\bullet$ {\it Proof of \eqref{n2}} :
Recall the functionals $\mathcal{B}_i, 5\le i\le 8$.\\
Using Young's inequality together with $\mu\le 1/\alpha$ and $a'\le \eps\lambda <\eps\delta_0$, we first have
\begin{align*}
\begin{aligned}
|\mathcal{B}_5(u)| &\le \delta_0 \int_\bbr a \mu(u)\left| \partial_\xi  \left( \eta'(u) - \eta'(S) \right) \right|^2 d\xi +\frac{C}{\delta_0} \int_\bbr |a'|^2  \left| \eta'(u) - \eta'(S) \right|^2 d\xi \\
&\le \delta_0 \mathcal{D}(u) + \underbrace{C \eps \int_\bbr a'  \left| \eta'(u) - \eta'(S) \right|^2 d\xi}_{=:B_{51}(u)}.
\end{aligned}
\end{align*}
We separate the remaining term $B_{51}(u)$ into
\[
|B_{51}(u)|\le |B_{51}(u)-B_{51}(\bar u)|+ |B_{51}(\bar u)|.
\] 
Using $|\eta'(\bar u)-\eta'(S)|\leq \delta_1$ and  \eqref{l3}, we have
\begin{align*}
\begin{aligned}
 |B_{51}(u)-B_{51}(\bar u)| &\le C  \int_\bbr a'  \left|  \left| \eta'(u) - \eta'(S) \right|^2-  \left| \eta'(\bar u) - \eta'(S) \right|^2 \right| d\xi\\
 &= C  \int_\bbr a'  \left| \eta'(u) - \eta'(\bar u) \right| \left|\eta'(u)+ \eta'(\bar u) - 2\eta'(S) \right| d\xi \\
 &\le C  \int_\bbr a'  \left| \eta'(u) - \eta'(\bar u) \right| \Big(\left|\eta'(u)- \eta'(\bar u) \right| + 2 \left| \eta'(\bar u)-\eta'(S) \right|\Big) d\xi \\
  &\le C  \int_\bbr a'  \Big(\left|\eta'(u)- \eta'(\bar u) \right|^2 + 2\delta_1 \left|\eta'(u)- \eta'(\bar u) \right| \Big) d\xi 
   \le  C\sqrt{\frac{\eps}{\lambda}} \mathcal{D}(u).
\end{aligned}
\end{align*}
\beq\label{use6}
\mbox{Using the hypothesis (i) of ($\mathcal{H}$2) with holding $\theta= 2|u_-|$, by \eqref{S-bdd} and \eqref{u-bdd}, we have}
\eeq
\begin{align*}
\begin{aligned}
 |B_{51}(\bar u)| \le C \eps \int_\bbr a'  \left| \eta'(\bar u) - \eta'(S) \right|^2 d\xi \le C \eps \int_\bbr a'   \eta(\bar u|S) d\xi.
\end{aligned}
\end{align*}
Since $\eps<\delta_0 \eps/\lambda$, we have
\[
|B_{51}(\bar u)| \le C\delta_0\frac{\eps}{\lambda} \mathcal{G}_0(\bar u).
\]
Therefore, for $\eps/\lambda <\delta_0\ll1$,
\beq\label{B5}
|\mathcal{B}_5(u)| \le C\delta_0^{1/2}  \mathcal{D}(u) + C\delta_0\frac{\eps}{\lambda} \mathcal{G}_0(\bar u).
\eeq

To estimate $|\mathcal{B}_6(u)| $, we separate it into
\[
|\mathcal{B}_6(u)| \le |\mathcal{B}_6(u)-\mathcal{B}_6(\bar u)|+ |\mathcal{B}_6(\bar u)|.
\] 
\beq\label{use7}
\mbox{Using the hypotheses $\eta''\ge\alpha$, and (i), (iii) of ($\mathcal{H}$2) with holding $\theta= 2|u_-|$,}
\eeq
by \eqref{S-bdd} and \eqref{u-bdd}, we have
\begin{align*}
\begin{aligned}
|\mathcal{B}_6(\bar u)| &\le C \eps^2 \int_\bbr a'  \left| \eta'(\bar u) - \eta'(S) \right| \left|\mu(\bar u) - \mu(S) \right|  d\xi\\
&\le C \eps^2 \int_\bbr a'  \left| \eta'(\bar u) - \eta'(S) \right| \left|\eta''(\bar u) - \eta''(S) \right|  d\xi\\
&\le C \eps^2 \int_\bbr a'  \left| \eta'(\bar u) - \eta'(S) \right|^2 d\xi\\
&\le C \delta_0\frac{\eps}{\lambda}  \int_\bbr a'   \eta(\bar u|S) d\xi \le C\delta_0\frac{\eps}{\lambda} \mathcal{G}_0(\bar u).
\end{aligned}
\end{align*}
Likewise, we have
\begin{align*}
\begin{aligned}
|\mathcal{B}_6(u)-\mathcal{B}_6(\bar u)| &\le C  \int_\bbr a'  \left|  \left( \eta'(u) - \eta'(S) \right)\left( \mu(u) - \mu(S) \right)- \left( \eta'(\bar u) - \eta'(S) \right)\left( \mu(\bar u) - \mu(S) \right) \right|  d\xi\\
&= C  \int_\bbr a'  \left|  \left( \eta'(u) - \eta'(\bar u) \right)\left( \mu(u) - \mu(S) \right)- \left( \eta'(\bar u) - \eta'(S) \right)\left( \mu(u) - \mu(\bar u) \right) \right|  d\xi\\
&\le C \int_\bbr a' \Big( \left| \eta'(u) - \eta'(\bar u) \right| \left|\eta''(u) - \eta''(S) \right| + \left| \eta'(\bar u) - \eta'(S) \right| \left|\eta''(u) - \eta''(\bar u) \right|  \Big) d\xi\\
&\le C \int_\bbr a' \Big( \left| \eta'(u) - \eta'(\bar u) \right| \left( |\eta'(u) - \eta'(\bar u)| +\delta_1 \right) +\delta_1 \left|\eta'(u) - \eta'(\bar u) \right|  \Big) d\xi\\
&\le C \int_\bbr a' \Big( \left| \eta'(u) - \eta'(\bar u) \right|^2+ \left|\eta'(u) - \eta'(\bar u) \right|  \Big) d\xi  
\le C\sqrt{\frac{\eps}{\lambda}} \mathcal{D}(u).
\end{aligned}
\end{align*}
Therefore,
\beq\label{B6}
|\mathcal{B}_6(u)| \le C\sqrt{\frac{\eps}{\lambda}} \mathcal{D}(u) + C\delta_0\frac{\eps}{\lambda} \mathcal{G}_0(\bar u).
\eeq

As in the estimate of $|\mathcal{B}_5(u)|$, we first have
\begin{align*}
\begin{aligned}
|\mathcal{B}_7(u)| &\le \delta_0 \int_\bbr a \mu(u)\left| \partial_\xi  \left( \eta'(u) - \eta'(S) \right) \right|^2 d\xi +\frac{C}{\delta_0} \int_\bbr |S'|^2  \left| \mu(u) - \mu(S) \right|^2 d\xi \\
&\le \delta_0 \mathcal{D}(u) + \underbrace{C \eps  \int_\bbr a'  \left| \eta''(u) - \eta''(S) \right|^2 d\xi }_{=:B_{71}(u)}.
\end{aligned}
\end{align*}
Following the same estimates as before, we have
\begin{align*}
\begin{aligned}
|B_{71}(u)-B_{71}(\bar u)| &\le  \int_\bbr a'  \left|  \left| \eta''(u) - \eta''(S) \right|^2-  \left| \eta''(\bar u) - \eta''(S) \right|^2 \right| d\xi\\
&= C  \int_\bbr a'  \left| \eta''(u) - \eta''(\bar u) \right| \left|\eta''(u)+ \eta''(\bar u) - 2\eta''(S) \right| d\xi \\
 &\le C  \int_\bbr a'  \left| \eta'(u) - \eta'(\bar u) \right| \Big(\left|\eta'(u)- \eta'(\bar u) \right| + 2 \left| \eta'(\bar u)-\eta'(S) \right|\Big) d\xi \\
  &\le C  \int_\bbr a'  \Big(\left|\eta'(u)- \eta'(\bar u) \right|^2 + 2\delta_1 \left|\eta'(u)- \eta'(\bar u) \right| \Big) d\xi 
   \le  C\sqrt{\frac{\eps}{\lambda}} \mathcal{D}(u),
\end{aligned}
\end{align*}
and
\begin{align*}
\begin{aligned}
 |B_{71}(\bar u)| \le C \eps \int_\bbr a'  \left| \eta'(\bar u) - \eta'(S) \right|^2 d\xi \le C \eps \int_\bbr a'   \eta(\bar u|S) d\xi \le C\delta_0\frac{\eps}{\lambda} \mathcal{G}_0(\bar u).
\end{aligned}
\end{align*}
Therefore, 
\beq\label{B7}
|\mathcal{B}_7(u)| \le C\delta_0^{1/2}  \mathcal{D}(u) + C\delta_0\frac{\eps}{\lambda} \mathcal{G}_0(\bar u).
\eeq

We first separate $|\mathcal{B}_8(u)|$ into
\[
|\mathcal{B}_8(u)| \le |\mathcal{B}_8(u)-\mathcal{B}_8(\bar u)| +|\mathcal{B}_8(\bar u)|.
\]
Since $|\bar u-S|\le |\bar u|+|S|\le 5|u_-|$, we use Taylor theorem to have
\[
|(\eta')(\bar u|S)|\le C |\bar u-S|^2,
\]
which together with \eqref{sec-S} and \eqref{e-est0} yields
\[
|\mathcal{B}_8(\bar u)| \le \eps \int_\bbr a'  |(\eta')(\bar u|S)| d\xi  \le C \eps \int_\bbr a'   \eta(\bar u|S) d\xi \le C\delta_0\frac{\eps}{\lambda} \mathcal{G}_0(\bar u).
\]
Using \eqref{l3} and \eqref{l4}, we have
\begin{align*}
\begin{aligned}
|\mathcal{B}_8(u)-\mathcal{B}_8(\bar u)|  \le  \eps \int_\bbr a'  \left| \eta'(u) - \eta'(\bar u) \right| d\xi + C \eps \int_\bbr a'   |u-\bar u| d\xi \le C\sqrt{\frac{\eps}{\lambda}} \mathcal{D}(u).
\end{aligned}
\end{align*}
Therefore, 
\beq\label{B8}
|\mathcal{B}_8(u)| \le C\sqrt{\frac{\eps}{\lambda}} \mathcal{D}(u)+ C\delta_0\frac{\eps}{\lambda} \mathcal{G}_0(\bar u).
\eeq

Hence, we combine \eqref{B5}-\eqref{B8} to find that for $\eps/\lambda <\delta_0\ll1$,
\[
\sum_{i=5}^8 |\mathcal{B}_i(u)| \le \delta_0^{1/3}  \mathcal{D}(u) + C\delta_0\frac{\eps}{\lambda} \mathcal{G}_0(\bar u).
\]

$\bullet$ {\it Proof of \eqref{n3}} :
Since $|\bar u-S|\le |\bar u|+|S|\le 5|u_-|$, we use Taylor theorem to have
\[
|F(\bar u|S)| + \left| \eta'(\bar u) - \eta'(S) \right|\left| f(\bar u) - f(S) \right| + |f(\bar u|S)| + |\eta(\bar u|S)| \le C|\bar u -S|^2.
\]
Therefore, using \eqref{e-est0} and \eqref{l1} together with \eqref{eq_G}, we have
\[
\sum_{i=1}^4 |\mathcal{B}_i(\bar u)| \le C  \int_\bbr a' |\bar u -S|^2 d\xi \le C  \int_\bbr a' \eta(\bar u | S) d\xi \le C  \int_\bbr a' \eta(u | S) d\xi \le C\frac{\eps^2}{\lambda}. 
\]
Hence, using \eqref{bad0}, \eqref{n1}, \eqref{n2} and \eqref{l1} with \eqref{eq_G}, and taking $\delta_0$ small enough, there exists $C^*>0$ such that
\[
|\mathcal{B}(u)| \le \delta_0^{1/4} \mathcal{D}(u) + C\delta_0\frac{\eps}{\lambda} \mathcal{G}_0(\bar u) \le \delta_0^{1/4} \mathcal{D}(u) + C^* \frac{\eps^2}{\lambda}.
\]

$\bullet$ {\it Proof of \eqref{m1}} : 
Recall the functional $Y$ in \eqref{badgood}. Since
\[
|\eta(u|S)-\eta(\bar u|S)|\le |\eta'(u)-\eta'(\bar u)| + C|u-\bar u|,
\]
we use \eqref{l3} and \eqref{l4} to have
\beq\label{rel-Y}
|Y(u)-Y(\bar u)| \le \int_\bbr a' \Big( |\eta'(u)-\eta'(\bar u)| + C |u-\bar u| \Big) d\xi \le C\sqrt{\frac{\eps}{\lambda}} \mathcal{D}(u).
\eeq
Therefore, if $\mathcal{D}(u)\leq \frac{C^*}{4}\frac{\eps^2}{\lambda}$, then
\[
|Y(u)-Y(\bar u)| \le C \frac{\eps^2}{\lambda} \sqrt{\frac{\eps}{\lambda}}.
\]
Hence this and \eqref{rel-Y} yield
\[
|Y(u)-Y(\bar u)|^2 \le C \frac{\eps^2}{\lambda} \frac{\eps}{\lambda}\mathcal{D}(u).
\]

\vspace{0.5cm}

\subsection{Conclusion}\label{sub:con}
We split the proof into two steps, depending on the strength of the diffusion term $\mathcal{D}(u)$. 
\vskip0.2cm
\noindent{\it Step 1:}  
We first consider the case of
$
\mathcal{D}(u)\geq 4 C^* \frac{\eps^2}{\lambda}, $  where the constant $C^*$ is defined as in Proposition \ref{prop_out}.
Then using $\eqref{n3}$ and taking $\delta_0$ small enough, we have
\begin{align*}
\begin{aligned}
\mathcal{R}(u) &:= -\frac{1}{\eps^4}Y^2(u) +\mathcal{B}(u)+\delta_0 \frac{\eps}{\lambda} |\mathcal{B}(u)| - \mathcal{G}(u) \\
&\leq 2|\mathcal{B}(u)|- \mathcal{D}(u)\\
& \leq 2C^*\frac{\eps^2}{\lambda}-\left(1-2\delta_0^{1/4}
\right)\mathcal{D}(u)\\
& \leq 2 C^* \frac{\eps^2}{\lambda}-\frac{1}{2}\mathcal{D}(u)\leq 0,
\end{aligned}
\end{align*}
which gives the desired result.

\vskip0.2cm
\noindent{\it Step 2:}  
We now assume the other alternative, i.e., $\mathcal{D}(u)\leq 4 C^* \frac{\eps^2}{\lambda}.$ \\
First of all, we recall the the fixed small constant $\delta_1$ of Proposition \ref{prop:near} associated to the constant $K$, such that
\[
\mathcal{R}_{\eps,\delta_1}(\bar u)\le 0\quad \mbox{by }\eqref{final-R}.
\]
Since
\[
|Y(\bar u)|^2 \le 2( |Y(u)|^2 +|Y(u)-Y(\bar u)|^2 ),
\]
we have
\[
-2|Y(u)|^2 \le -|Y(\bar u)|^2 + 2|Y(u)-Y(\bar u)|^2.
\]
Then, recalling the functionals $\mathcal{B}=\sum_{i=1}^8 \mathcal{B}_i$ and $\mathcal{G}=\mathcal{G}_0 + \mathcal{D}$, we use \eqref{eq_G} and \eqref{eq_D} to find that for $\delta_0$ small enough, and for any $\delta_0^{-1}\eps<\lambda<\delta_0$,
\begin{align*}
\begin{aligned}
\mathcal{R}(u)&\le-\frac{2|Y(u)|^2}{\eps\delta_1}+ \mathcal{B}(u)+\delta_0\frac{\eps}{\lambda} |\mathcal{B}(u)|-\mathcal{G}(u)\\
&\leq -\frac{|Y(\bar u)|^2}{\eps\delta_1}+\sum_{i=1}^4 \mathcal{B}_i(\bar u) +\delta_0\frac{\eps}{\lambda} \left|\sum_{i=1}^4 \mathcal{B}_i(\bar u) \right|  -\left(1-\delta_1\frac{\eps}{\lambda}\right)\mathcal{G}_0(\bar u)-(1-\delta_1)\mathcal{D}(\bar u)\\
&\quad \underbrace{+\frac{2}{\eps\delta_1} |Y(u)-Y(\bar u)|^2 }_{=:J_1} + \underbrace{\left(1+\delta_0\frac{\eps}{\lambda}\right)\left(\sum_{i=1}^4|\mathcal{B}_i(u)-\mathcal{B}_i(\bar u)|+\sum_{i=5}^8|\mathcal{B}_i(u)| \right)}_{=:J_2} \\
&\quad-\delta_1\frac{\eps}{\lambda}\mathcal{G}_0(\bar u)  -\delta_1 \mathcal{D}( u).
\end{aligned}
\end{align*}
We claim that $J_1$ and $J_2$ are controlled by the last line above.  
Indeed, it follows from \eqref{n1}, \eqref{n2} and \eqref{m1} that for $\delta_0$ small enough with $\delta_0<\delta_1^4$, and  for any $\eps/\lambda<\delta_0$,
\begin{align*}
\begin{aligned}
J_1&\le \frac{C}{\delta_1}\Big(\frac{\eps}{\lambda} \Big)^2\mathcal{D}(u)\le  \frac{C\delta_0^2}{\delta_1}\mathcal{D}(u) \le  \frac{\delta_1}{2} \mathcal{D}(u),\\
J_2&\le  (C\delta_0^{1/2}+\delta_0^{1/3} ) \mathcal{D}(u) + C\delta_0\frac{\eps}{\lambda} \mathcal{G}_0(\bar u) \le \frac{\delta_1}{2} \mathcal{D}(u) +\delta_1 \frac{\eps}{\lambda} \mathcal{G}_0(\bar u).
\end{aligned}
\end{align*}
Hence we use \eqref{final-R} to have
\[
\mathcal{R}(u)\leq -\frac{|Y(\bar u)|^2}{\eps\delta_1}+\sum_{i=1}^4 \mathcal{B}_i(\bar u) +\delta_1\frac{\eps}{\lambda} \left|\sum_{i=1}^4 \mathcal{B}_i(\bar u) \right|  -\left(1-\delta_1\frac{\eps}{\lambda}\right)\mathcal{G}_0(\bar u)-(1-\delta_1)\mathcal{D}(\bar u) \le0,
\]
which completes the proof.

\begin{appendix}
\setcounter{equation}{0}

\section{Existence of entropies}\label{app-entropy}
We first provide a proof for the existence of entropies verifying the hypotheses (${\mathcal H}1$)-(${\mathcal H}2$).
\begin{lemma}
Let $f : \bbr\to\bbr$ be a smooth (or ${C}^3$) and strictly convex function satisfying the growth condition: there exist constants $a, b>0$ such that $|f(u)|\le  a e^{b|u|}$ for all $u\in\bbr$.
Then, there exists a function $\eta : \bbr\to\bbr$ such that $\eta$ is strictly convex and smooth satisfying the following hypotheses:
\begin{itemize}
\item  
(${\mathcal H}1$): There exists $\alpha>0$ such that $\eta''(u)\ge \alpha$ and $\eta''''(u)\ge \alpha$ for all $u\in\bbr$.\\

\item  
(${\mathcal H}2$): For a given constant $\theta>0$, there exists a constant $C>0$ such that 
for any $u,v\in \bbr$ with $|v|\le \theta$, the following inequalities holds:\\

(i) $ |\eta'(u)-\eta'(v)|^2 {\mathbf 1}_{\{ |u| \le 2\theta \}} + |\eta'(u)-\eta'(v)|{\mathbf 1}_{\{|u| >2\theta \}} \le C \eta(u|v)$\\

(ii) $|f(u)-f(v)|\le C |\eta'(u)-\eta'(v)|$\\

(iii) $|\eta''(u)-\eta''(v)|\le C |\eta'(u)-\eta'(v)|$\\

(iv) $\Big|\int^u_v \eta''(w)f(w)\, dw \Big| \le C \Big( |\eta'(u)-\eta'(v)| {\mathbf 1}_{\{ |u| \le 2\theta \}}  +|\eta'(u)-\eta'(v)|^2 {\mathbf 1}_{\{|u| >2\theta \}}\Big)$.
\end{itemize}
\end{lemma}

\begin{proof}
Let 
\[
\eta(u)=e^{b u} +e^{-bu} +u^4+u^2,
\]
where the positive constant $b$ is as in the growth condition of $f$.\\
Then we will show that $\eta$ verifies the all hypotheses.\\
First, for all $u\in\bbr$,
\[
\eta''(u)=b^2(e^{b u} +e^{-bu} ) + 12 u^2 +2 \ge 2,\quad \eta''''(u) = b^4(e^{b u} +e^{-bu} )  +24 \ge 24,
\]
which verifies (${\mathcal H}1$).\\
Below, the constant $C$ may change from line to line, but depend only on $a, b, \theta$.\\
For (i) of (${\mathcal H}2$), we first use the mean-value theorem together with \eqref{e-est0} to have
\[
 |\eta'(u)-\eta'(v)|^2 {\mathbf 1}_{\{ |u| \le 2\theta \}} \le C |u-v|^2 {\mathbf 1}_{\{ |u| \le 2\theta \}} \le C  \eta(u|v){\mathbf 1}_{\{ |u| \le 2\theta \}}.
\]
Since 
\[
\lim_{|u|\to\infty}\Big|\frac{\eta'(u)}{\eta(u)} \Big| =b >0,
\]
$ |\eta'(u)-\eta'(v)|{\mathbf 1}_{\{|u| >2\theta \}} \le C \eta(u|v){\mathbf 1}_{\{ |u| > 2\theta \}}$. More precisely, using the definition of $\eta$ and $\eta(\cdot|\cdot)$, we find that for  $|u| >2\theta$ and $|v|\le \theta$,
\[
 |\eta'(u)-\eta'(v)| \le  |\eta'(u)| + |\eta'(\theta)| \le  (b+C) e^{b|u|} \le C \eta(u|v).
\]
Therefore, we have (i) of (${\mathcal H}1$).\\
Likewise, we can get the remaining estimates (ii)-(iv) of  (${\mathcal H}1$).
Indeed, when $|u|\le 2\theta$, the both sides of (ii)-(iv) are all comparable with $|u-v|$, by the mean-value theorem. Also, we observe that for $|u|> 2\theta$, 
\[
|f(u)-f(v)|\sim |\eta'(u)-\eta'(v)|\sim |\eta''(u)-\eta''(v)|\sim e^{b|u|},
\]
and 
\[
\Big|\int^u_v \eta''(w)f(w)\, dw \Big| \sim |\eta'(u)-\eta'(v)|^2 \sim e^{2b|u|}.
\]
\end{proof}

The next lemma straightforwardly follows from the above lemma, since the growth condition of the flux is not necessary in a bounded interval.

\begin{lemma}
Let $f : \bbr\to\bbr$ be a smooth (or ${C}^3$) and strictly convex function.
Then, there exists a function $\eta : \bbr\to\bbr$ such that $\eta$ is strictly convex and smooth satisfying the following hypotheses:
\begin{itemize}
\item  
(${\mathcal H}1$): There exists $\alpha>0$ such that $\eta''(u)\ge \alpha$ and $\eta''''(u)\ge \alpha$ for all $u\in\bbr$.\\

\item  
(${\mathcal H}2'$): For a given constant $M>0$, there exists a constant $C>0$ such that 
for any $u,v\in \bbr$ satisfying $ |u| \le M $ and $|v|\le M/2$, the following inequalities holds:\\

(i) $ |\eta'(u)-\eta'(v)|^2  \le C \eta(u|v)$\\

(ii) $|f(u)-f(v)|\le C |\eta'(u)-\eta'(v)|$\\

(iii) $|\eta''(u)-\eta''(v)|\le C |\eta'(u)-\eta'(v)|$\\

(iv) $\Big|\int^u_v \eta''(w)f(w)\, dw \Big| \le C |\eta'(u)-\eta'(v)| $.
\end{itemize}
\end{lemma}
\begin{proof}
Let 
\[
\eta(u) = u^4+u^2.
\]
Then, $\eta$ satisfies (${\mathcal H}1$).\\
For any $u,v\in \bbr$ satisfying $ |u| \le M $ and $|v|\le M/2$, the estimates (i)-(iv) hold by the mean-value theorem.
\end{proof}

\end{appendix}

\bibliography{viscous_scalar_general_flux}
\end{document}